\newtheorem{theorem}{Theorem}[section]
\newtheorem{lemma}[theorem]{Lemma}
\newtheorem{proposition}[theorem]{Proposition}
\newtheorem{corollary}[theorem]{Corollary}
\theoremstyle{definition}
\newtheorem{definition}[theorem]{Definition}
\newtheorem{remark}[theorem]{Remark}
\newtheorem{example}[theorem]{Example}
\begin{document}
\author[M. Nasernejad,  A. A. Qureshi, K. Khashyarmanesh, and  L. G. Roberts]{ Mehrdad ~Nasernejad$^{1}$,  Ayesha Asloob Qureshi$^{2,*}$, Kazem Khashyarmanesh$^{1}$, and Leslie G. Roberts$^{3}$}
\title[Classes of normally and nearly normally torsion-free ideals]{Classes of normally and nearly normally torsion-free monomial ideals}
\subjclass[2010]{13B25, 13F20, 05E40.} 
\keywords { Normally torsion-free ideals, Nearly normally torsion-free ideals, Associated prime ideals, $t$-spread monomial ideals, Hypergraphs.}
\thanks{$^*$Corresponding author}

\thanks{E-mail addresses:   m\_{nasernejad@yahoo.com},  aqureshi@sabanciuniv.edu, khashyar@ipm.ir, and  robertsl@queensu.ca}  
\maketitle
 
\begin{center}
{\it
$^1$Department of Pure Mathematics,
 Ferdowsi University of Mashhad,\\
P.O.Box 1159-91775, Mashhad, Iran\\
 $^2$Sabanc\i\;University, Faculty of Engineering and Natural Sciences, \\
Orta Mahalle, Tuzla 34956, Istanbul, Turkey\\
$^3$Department of Mathematics and Statistics, 
Queen's University, \\
Kingston, Ontario, Canada, K7L 3N6
}
\end{center}

\vspace{0.4cm}

\begin{abstract}
In this paper, our main focus is to explore different classes of nearly normally torsion-free ideals. We first characterize all finite simple connected graphs with nearly normally torsion-free cover ideals. Next, we characterize all normally torsion-free $t$-spread principal Borel ideals that can also be viewed as edge ideals of uniform multipartite hypergraphs. 
\end{abstract}
\vspace{0.4cm}

\section{Introduction}

Let $\mathcal{H}=(V_{\mathcal{H}},E_{\mathcal{H}})$ be a simple hypergraph on vertex set $V_{\mathcal{H}}$ and edge set $E_{\mathcal{H}}$.  The edge ideal of $\mathcal{H}$, denoted by $I(\mathcal{H})$, is the ideal generated by the monomials corresponding to the edges of $\mathcal{H}$. A hypergraph $\mathcal{H}$ is called {\em Mengerian} if it satisfies a certain  min-max equation, which is known as the Mengerian property in hypergraph theory or as the max-flow min-cut property in integer programming. Algebraically, it is equivalent to $I(\mathcal{H})$ being normally torsion-free, see \cite[Corollary 10.3.15]{HH1}, \cite[Theorem 14.3.6]{V1}.

Let $R$ be a commutative Noetherian ring and $I$ be  an ideal of $R$. In addition,  let $\mathrm{Ass}_R(R/I)$ be the set of all prime ideals associated to  $I$. An ideal $I$ is called {\it normally torsion-free} if  $\mathrm{Ass}(R/I^k)\subseteq \mathrm{Ass}(R/I)$, for all $k\geq 1$  \cite[Definition 1.4.5]{HH1}. In other words, if $I$ has no embedded primes, then $I$ is normally torsion-free if and only if the ordinary powers of $I$ coincide with the symbolic powers of $I$. Normally torsion-free ideals have been a topic of several papers, however, a few classes of these ideals originate  from graph theory. Simis, Vasconcelos and Villarreal showed in \cite{SVV}  that  a finite simple graph is bipartite if and only if its edge ideal is normally torsion-free. An analogue of bipartite graphs in higher dimensions is considered to be a hypergraph that avoids ``special odd cycles". Such hypergraphs are called {\em balanced}, and a well-known result of Fulkerson, Hoffman and Oppenheim in \cite{FHO} states that balanced hypergraphs are Mengerian. It follows  immediately  that the edge ideals of balanced hypergraphs are normally torsion-free. However, unlike the case of bipartite graphs, it should be noted that the converse of this statement is not true. We refer the reader to \cite{B} for the related definitions in hypergraph theory.

     A monomial ideal $I$ in a polynomial  ring $R=K[x_1, \ldots, x_n]$ over a field $K$ is called {\it nearly normally torsion-free}  if there exist a positive integer $k$ and a monomial prime ideal
 $\mathfrak{p}$ such that $\mathrm{Ass}_R(R/I^m)=\mathrm{Min}(I)$ for all $1\leq m\leq k$, and 
 $\mathrm{Ass}_R(R/I^m) \subseteq \mathrm{Min}(I) \cup \{\mathfrak{p}\}$ for all $m \geq k+1$, see \cite[Definition 2.1]{Claudia}. This concept generalizes normally torsion-freeness to some extent. Recently, the author, in \cite[Theorem 2.3]{Claudia}, characterized all connected graphs whose edge ideals are nearly normally torsion-free. Our main aim is to further explore different classes of nearly normally torsion-free ideals and normally torsion-free ideals that originate from hypergraph theory. 
 
 This paper is organized as follows:  In Section~\ref{prem}, we provide some notation  and definitions which appear throughout the paper. In Section~\ref{generalresults}, we give some results to develop different criteria that help to investigate whether an ideal is normally torsion-free. For this purpose, we employ  the notion of monomial localization of a monomial ideal with respect to a prime monomial ideal, see Lemmas \ref{Lem. 1} and \ref{Lem. 2}. In what follows, we provide  two applications of Corollary \ref{Cor. 1}. In the first application, we give a class of nearly normally torsion-free monomial ideals, which is concerned with the monomial ideals of intersection type (Proposition  \ref{App. 1}). For the second application, we turn our attention to the $t$-spread principal Borel ideals, and reprove one of the main results of \cite{Claudia} (Proposition  \ref{App. 2}). We close Section~\ref{generalresults} by giving a concrete example of a nearly normally torsion-free ideal that does not have the strong persistence property, see Definition~\ref{spersistence}. Note that the ideal given in our example is not square-free. The question that whether nearly normally torsion-free square-free monomial ideals have the persistence or the strong persistence property still remains open. 

In Section~\ref{coverideals}, one of our main results  is Theorem \ref{Main. 1} which proves that if $G$ is a finite simple connected graph, then the cover ideal of $G$ is nearly normally torsion-free if and only if $G$ is either a bipartite graph or an almost bipartite graph.   In proving 
this we found reference  \cite {JS} very helpful.   The second main result of Section~\ref{coverideals} is Corollary~\ref{Cor. NFT1} which provides a new class of normally torsion-free ideals based on the existing ones. To reach this purpose, we start with a hypergraph $\mathcal{H}$ whose edge ideal is normally torsion-free, in other words, we take a Mengerian hypergraph $\mathcal{H}$. By using the notion of coloring of hypergraphs, in Theorem~\ref{NTF1}, we show that the new hypergraph $\mathcal{H'}$ obtained by adding a ``whisker" to $\mathcal{H}$ is also Mengerian. Here, to add a whisker to $\mathcal{H}$, one adds a new vertex and an edge of size two consisting of this new vertex and an existing vertex of $\mathcal{H}$. 

In Section~\ref{borel}, we study $t$-spread principal Borel ideals, see Definition~\ref{boreldef}. An interesting feature of $t$-spread principal Borel ideals is noted in Theorem~\ref{complete} that shows a $t$-spread principal Borel ideal is normally torsion-free if and only if it can viewed as an edge ideal of a certain $d$-uniform $d$-partite hypergraph. As we noted in Example~\ref{oddcycle}, the hypergraph associated to a $t$-spread principal Borel ideal may contain special odd cycles, and hence these hypergraphs are not necessarily balanced. This limits us to use the result of Fulkerson, Hoffman and Oppenheim in \cite{FHO}. We prove Theorem~\ref{complete} by applying a combination of algebraic and combinatorial techniques. We make use of \cite[Theorem 3.7]{SNQ} which gives a criterion to check whether an ideal is normally torsion-free. In addition, we use the notion of linear relation graph associated to monomial ideals, see Definition~\ref{linearrelationgraphdef}, to study the set of associated primes ideals of $t$-spread principal Borel ideals.

\section{Preliminaries}\label{prem}

Let  $R$ be  a commutative Noetherian ring and $I$ be   an ideal of $R$. A prime ideal $\mathfrak{p}\subset  R$ is an {\it associated prime} of $I$ if there exists an element $v$ in $R$ such that $\mathfrak{p}=(I:_R v)$, where $(I:_R v)=\{r\in R |~  rv\in I\}$. The  {\it set of associated primes} of $I$, denoted by  $\mathrm{Ass}_R(R/I)$, is the set of all prime ideals associated to  $I$. 
The minimal members of $\mathrm{Ass}_R(R/I)$ are called the {\it minimal} primes of $I$, and $\mathrm{Min}(I)$ denotes the set of minimal prime ideals of $I$. Moreover, the associated primes of $I$ which are not minimal are called the {\it embedded} primes of $I$. If $I$ is a square-free monomial ideal, then $\mathrm{Ass}_R(R/I)=\mathrm{Min}(I)$, for example see \cite[Corollary 1.3.6]{HH1}.  

\begin{definition}\label{spersistence}
The ideal $I$ is said to have the {\it persistence property} if $\mathrm{Ass}(R/I^k)\subseteq \mathrm{Ass}(R/I^{k+1})$ 
for all positive integers $k$. Moreover, an ideal $I$ satisfies the {\it strong persistence property} if $(I^{k+1}: I)=I^k$ for all positive integers $k$,  for more details refer to \cite{ HQ, N2}. The strong persistence property implies the persistence property, however the converse is not true, as noted in \cite{HQ}. 
\end{definition}

Here, we should recall  the definition of symbolic powers of an ideal. 

\begin{definition} (\cite[Definition 4.3.22]{V1})
 Let $I$  be an ideal of a ring  $R$  and  $\mathfrak{p}_1, \ldots, \mathfrak{p}_r$  the minimal primes of $I$. Given an integer $n \geq 1$, the {\it $n$-th symbolic
power} of $I$  is defined to be the ideal 
$$I^{(n)} = \mathfrak{q}_1  \cap \cdots \cap  \mathfrak{q}_r,$$
where $\mathfrak{q}_i$  is the primary component of $I^n$  corresponding to $\mathfrak{p}_i$.  
\end{definition}

Furthermore,  we say that $I$  has the {\it symbolic strong persistence property}  if $(I^{(k+1)}: I^{(1)})=I^{(k)}$ for all $k$, where $I^{(k)}$ denotes the  $k$-th symbolic  power  of $I$, cf. \cite{KNT, RT}. \par 
 
 Let $R$ be a unitary commutative ring and $I$ an ideal in $R$. An element $f\in R$ is {\it integral} over $I$, if there exists an equation 
 $$f^k+c_1f^{k-1}+\cdots +c_{k-1}f+c_k=0 ~~\mathrm{with} ~~ c_i\in I^i.$$
 The set of elements $\overline{I}$ in $R$ which are integral over $I$ is the 
 {\it integral closure} of $I$. The ideal $I$ is {\it integrally closed}, if $I=\overline{I}$, and $I$ is {\it normal} if all powers of $I$ are integrally closed, we refer to \cite{HH1} for more information. \par 
 
 In particular, if $I$ is a monomial ideal, then
the notion of integrality becomes simpler, namely, a monomial 
$u \in  R=K[x_1, \ldots, x_n]$ is integral
over $I\subset R$ if and only if there exists an integer $k$ such that  $u^k \in I^k$, see \cite[Theorem 1.4.2]{HH1}.

An ideal $I$ is called {\it normally torsion-free} if  $\mathrm{Ass}(R/I^k)\subseteq \mathrm{Ass}(R/I)$, for all $k\geq 1$. If $I$ is a square-free monomial ideal, then $I$ is normally torsion-free if and only if $I^k=I^{(k)}$, for all $k \geq 1$, see \cite[Theorem 1.4.6]{HH1}. 

\begin{definition}

 A monomial ideal $I$ in a polynomial  ring $R=K[x_1, \ldots, x_n]$ over a field $K$ is called {\it nearly normally torsion-free}  if there exist a positive integer $k$ and a monomial prime ideal
 $\mathfrak{p}$ such that $\mathrm{Ass}_R(R/I^m)=\mathrm{Min}(I)$ for all $1\leq m\leq k$, and 
 $\mathrm{Ass}_R(R/I^m) \subseteq \mathrm{Min}(I) \cup \{\mathfrak{p}\}$ for all $m \geq k+1$, see \cite[Definition 2.1]{Claudia}. 
 
\end{definition}

Next, we recall some notions which are related to graph theory and hypergraphs.

\begin{definition}
Let $G=(V(G), E(G))$ be a finite simple graph on the vertex set $V(G)=\{1,\ldots,n\}$.  Then   the {\it edge ideal } associated to $G$ is the monomial ideal
$$I(G)=(x_ix_j \ : \ \{i,j\}\in E(G)) 
\subset R=K[x_1,\ldots, x_n]. $$
\end{definition}

A finite {\it hypergraph} $\mathcal{H}$ on a vertex set $[n]=\{1,2,\ldots,n\}$ is a collection of edges  $\{ E_1, \ldots, E_m\}$ with $E_i \subseteq [n]$, for all $i=1, \ldots,m$. The vertex set $[n]$ of $\mathcal{H}$ is denoted by $V_{\mathcal{H}}$, and the edge set of $\mathcal{H}$ is denoted by $E_{\mathcal{H}}$. Typically, a hypergraph is represented as a pair $(V_{\mathcal{H}}, E_{\mathcal{H}})$. A hypergraph $\mathcal{H}$ is called {\it simple}, if $E_i \subseteq  E_j$ implies  $i = j$. Moreover, if $|E_i|=d$, for all $i=1, \ldots, m$, then $\mathcal{H}$ is called {\em $d$-uniform } hypergraph. A 2-uniform hypergraph $\mathcal{H}$ is just a finite simple graph. If $\mathcal{W}$ is a subset of the vertices of $\mathcal{H}$, then the {\it induced subhypergraph} of $\mathcal{H}$  on $\mathcal{W}$ is $(\mathcal{W}, E_{\mathcal{W}})$, where $E_{\mathcal{W}}=\{E\cap \mathcal{W}: E \in E_{\mathcal{H}} \text{ and } E \cap \mathcal{W} \neq \emptyset\}$.

In \cite{HaV}, H\`a   and Van Tuyl extended the concept of the edge ideal  to hypergraphs. 
 
 \begin{definition} \cite{HaV}
Let $\mathcal{H}$ be  a hypergraph on the vertex set $V_{\mathcal{H}}=[n]$,  and $E_{\mathcal{H}} = \{E_1, \ldots, E_m\}$ be  the edge set of $\mathcal{H}$. Then the {\it edge ideal} corresponding
to $\mathcal{H}$ is given by
$$I(\mathcal{H}) = (\{x^{E_i}~ | ~E_i\in  E_{\mathcal{H}}\}),$$
where $x^{E_i}=\prod_{j\in E_i} x_j$.

A subset  $W \subseteq V_{\mathcal{H}}$  is a {\it vertex cover} of $\mathcal{H}$ if $W \cap E_i\neq \emptyset$  for all $i=1, \ldots, m$. A vertex cover $W$ is {\it minimal} if no proper subset of $W$ is a vertex cover of  $\mathcal{H}$.  Let $W_1, \ldots, W_t$ be  the minimal vertex covers of $\mathcal{H}$.  Then  the cover ideal of the hypergraph  $\mathcal{H}$, 
denoted by $J(\mathcal{H})$, is given by 
$J(\mathcal{H})=(X_{W_1}, \ldots, X_{W_t}),$ where 
$X_{W_j}=\prod_{r\in W_j}x_r$ for each $j=1, \ldots, t$.
  \end{definition}
Moreover, recall that the Alexander dual of a square-free monomial ideal $I$, denoted by $I^\vee$,  is given by 
 $$I^\vee= \bigcap_{u\in \mathcal{G}(I)} (x_i~:~ x_i|u).$$

In particular, according to  Proposition 2.7 in \cite{FHM}, one has  $J(\mathcal{H})=I(\mathcal{H})^{\vee}$, where  
$I(\mathcal{H})^{\vee}$ denotes the Alexander dual of $I(\mathcal{H})$.

Throughout this paper,  we denote the unique minimal set of monomial generators of a  monomial ideal $I$  by $\mathcal{G}(I)$. Also, $R=K[x_1,\ldots, x_n]$ is a polynomial ring over a field $K$, $\mathfrak{m}=(x_1, \ldots, x_n)$ is  the  graded maximal ideal of $R$, and $x_1, \ldots, x_n$ are indeterminates. 


\section{Some classes of nearly normally torsion-free ideals}\label{generalresults}

In this section, our aim is to give additional  classes of nearly normally torsion-free monomial ideals. To achieve this, we first recall the definition of the monomial localization of a monomial ideal with respect to a monomial prime ideal. 
Let $I$ be a monomial ideal in a polynomial ring $R=K[x_1, \ldots, x_n]$ over a field $K$.
  We denote by $V^*(I)$ the set of monomial prime ideals containing $I$. Let $\mathfrak{p}=(x_{i_1}, \ldots, x_{i_r})$ be a monomial prime ideal. The {\it monomial localization} of $I$ with respect to $\mathfrak{p}$, denoted by $I(\mathfrak{p})$, is the ideal in the polynomial ring $R(\mathfrak{p})=K[x_{i_1}, \ldots, x_{i_r}]$  which is obtained from $I$ by applying the $K$-algebra homomorphism $R\rightarrow R(\mathfrak{p})$  with $x_j\mapsto 1$  for all $x_j\notin \{x_{i_1}, \ldots, x_{i_r}\}$.

\begin{lemma} \label{Lem. 1}
Let $I$ be a nearly normally torsion-free monomial ideal in a polynomial ring $R=K[x_1, \ldots, x_n]$ over a field $K$. Then   there exists a monomial prime ideal $\mathfrak{p}\in V^*(I)$ such that $I(\mathfrak{p}\setminus \{x_i\})$ is normally torsion-free  for all $x_i\in \mathfrak{p}$.
\end{lemma}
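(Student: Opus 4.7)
The plan is to take $\mathfrak{p}$ to be exactly the distinguished monomial prime provided by the definition of nearly normally torsion-free. Because $\mathfrak{p}$ is associated to some power of $I$, it necessarily contains $\mathrm{rad}(I)$ and therefore lies in $V^*(I)$. The task then reduces to showing, for each $x_i \in \mathfrak{p}$, that the localization $I(\mathfrak{p}\setminus\{x_i\})$ has no embedded associated primes at any power.

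Fix $x_i \in \mathfrak{p}$ and set $\mathfrak{q} := \mathfrak{p}\setminus\{x_i\}$, the monomial prime generated by the variables of $\mathfrak{p}$ other than $x_i$; in particular, $x_i \in \mathfrak{p}\setminus\mathfrak{q}$, so $\mathfrak{p}\not\subseteq\mathfrak{q}$. I would then invoke two standard identities for monomial localization: first, that it commutes with taking powers, $I(\mathfrak{q})^m = I^m(\mathfrak{q})$, and second, that for any monomial ideal $J$,
\[
\mathrm{Ass}_{R(\mathfrak{q})}\bigl(R(\mathfrak{q})/J(\mathfrak{q})\bigr) = \{\mathfrak{r} \in \mathrm{Ass}_{R}(R/J) : \mathfrak{r}\subseteq\mathfrak{q}\}.
\]
Combining these gives a description of $\mathrm{Ass}_{R(\mathfrak{q})}(R(\mathfrak{q})/I(\mathfrak{q})^m)$ as the set of associated primes of $I^m$ contained in $\mathfrak{q}$, and, taking $m=1$ and restricting to minimal elements, $\mathrm{Min}(I(\mathfrak{q})) = \{\mathfrak{r} \in \mathrm{Min}(I) : \mathfrak{r} \subseteq \mathfrak{q}\}$.

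Applying the nearly normally torsion-free hypothesis, $\mathrm{Ass}(R/I^m) \subseteq \mathrm{Min}(I)\cup\{\mathfrak{p}\}$ for every $m \geq 1$. Since $\mathfrak{p}\not\subseteq\mathfrak{q}$, the distinguished prime $\mathfrak{p}$ is filtered out by the localization, and every associated prime of $I(\mathfrak{q})^m$ comes from a minimal prime of $I$ contained in $\mathfrak{q}$, i.e., from $\mathrm{Min}(I(\mathfrak{q}))$. Therefore $\mathrm{Ass}(R(\mathfrak{q})/I(\mathfrak{q})^m) \subseteq \mathrm{Min}(I(\mathfrak{q}))$ for every $m$, which is exactly the statement that $I(\mathfrak{q})$ is normally torsion-free; the degenerate case $I\not\subseteq\mathfrak{q}$ forces $I(\mathfrak{q}) = R(\mathfrak{q})$ and is automatic.

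The main step is the conceptual observation that deleting a single variable of $\mathfrak{p}$ from the localization prime suffices to kill $\mathfrak{p}$ off, thanks to the purely combinatorial fact that the associated primes of a monomial ideal that persist after a monomial localization are exactly those lying inside the localization prime. Once that is in place, the proof is essentially bookkeeping built on the two monomial-localization identities above, both of which are standard and can be extracted from primary decomposition arguments in \cite{HH1}.
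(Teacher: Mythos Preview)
Your argument is correct and follows essentially the same route as the paper's proof: take the distinguished prime $\mathfrak{p}$ from the definition, set $\mathfrak{q}=\mathfrak{p}\setminus\{x_i\}$, and use the standard identification of $\mathrm{Ass}_{R(\mathfrak{q})}(R(\mathfrak{q})/I(\mathfrak{q})^m)$ with the associated primes of $I^m$ contained in $\mathfrak{q}$ (the paper cites \cite[Lemma~4.6]{RNA} for this) to filter out $\mathfrak{p}$. One small caveat: your justification that $\mathfrak{p}\in V^*(I)$ assumes $\mathfrak{p}$ actually occurs as an associated prime of some power, which the definition does not literally guarantee; however, if it never does then $I$ is already normally torsion-free and one may simply take $\mathfrak{p}=\mathfrak{m}$, so the point is harmless (and the paper's own proof does not address it either).
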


\begin{proof}
Since  $I$ is  nearly normally torsion-free, there exists a positive integer $k$ and a monomial prime ideal  $\mathfrak{p}$ such that    $\mathrm{Ass}_R(R/I^m)=\mathrm{Min}(I)$ for all $1\leq m\leq k$, and 
 $\mathrm{Ass}_R(R/I^m) \subseteq \mathrm{Min}(I) \cup \{\mathfrak{p}\}$ for all $m \geq k+1$. We claim that $I(\mathfrak{p}\setminus \{x_i\})$ is normally torsion-free  for all $x_i\in \mathfrak{p}$. To do this, fix  $x_i\in \mathfrak{p}$, and  set $\mathfrak{q}:=\mathfrak{p}\setminus \{x_i\}$. 
 We need to show that 
 $\mathrm{Ass}_{R(\mathfrak{q})}(R(\mathfrak{q})/(I(\mathfrak{q}))^\ell) \subseteq \mathrm{Ass}_{R(\mathfrak{q})}(R(\mathfrak{q})/I(\mathfrak{q}))$ for all $\ell$. Fix $\ell \geq 1$. 
It follows from \cite[Lemma 4.6]{RNA} that
$$\mathrm{Ass}_{R(\mathfrak{q})}(R(\mathfrak{q})/(I(\mathfrak{q}))^\ell)=
\mathrm{Ass}_{R(\mathfrak{q})}(R(\mathfrak{q})/I^\ell(\mathfrak{q}))=\{Q~:~ Q\in \mathrm{Ass}_{R}(R/I^\ell)~\mathrm{and}~ Q \subseteq \mathfrak{q}\}.$$ 
Pick an arbitrary element $Q\in \mathrm{Ass}_{R(\mathfrak{q})}(R(\mathfrak{q})/(I(\mathfrak{q}))^\ell)$. Thus, $Q\in \mathrm{Ass}_{R}(R/I^\ell)$ and  $ Q \subseteq \mathfrak{q}$. Since $\mathfrak{q}=\mathfrak{p}\setminus \{x_i\}$, this yields that $Q\neq \mathfrak{p}$; thus, one must have  $Q\in \mathrm{Min}(I)$. Hence,  $Q\in   \mathrm{Ass}_R(R/I)$, and so   $Q\in  \mathrm{Ass}_{R(\mathfrak{q})}(R(\mathfrak{q})/I(\mathfrak{q}))$. Therefore,  
$I(\mathfrak{p}\setminus \{x_i\})$ is normally torsion-free. 
\end{proof}

 
\begin{lemma} \label{Lem. 2}
Let $I$ be a   monomial ideal in a polynomial ring $R=K[x_1, \ldots, x_n]$ over a field $K$ such that $\mathrm{Ass}_R(R/I)=\mathrm{Min}(I)$.  Let   $I(\mathfrak{m}\setminus \{x_i\})$ be  normally torsion-free  for all $i=1, \ldots, n$, where $\mathfrak{m}=(x_1, \ldots, x_n)$. Then $I$ is nearly normally torsion-free. 
\end{lemma}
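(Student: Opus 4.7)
The plan is to run the argument of Lemma \ref{Lem. 1} in reverse, using the monomial ideal $\mathfrak{m}$ itself as the distinguished prime $\mathfrak{p}$ in the definition of nearly normally torsion-free, and taking $k=1$. With these choices, the first condition $\mathrm{Ass}_R(R/I^m) = \mathrm{Min}(I)$ for $1 \leq m \leq 1$ is exactly the standing hypothesis $\mathrm{Ass}_R(R/I) = \mathrm{Min}(I)$, so the entire proof reduces to verifying the inclusion
\[
\mathrm{Ass}_R(R/I^m) \subseteq \mathrm{Min}(I) \cup \{\mathfrak{m}\} \quad \text{for every } m \geq 2.
\]

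To check this, I would fix $m \geq 2$ and pick an arbitrary $Q \in \mathrm{Ass}_R(R/I^m)$. Since $I$ is monomial, $Q$ is necessarily a monomial prime. Either $Q = \mathfrak{m}$, in which case there is nothing to show, or $Q$ is a proper monomial prime of $\mathfrak{m}$, in which case some variable $x_i$ does not appear in $Q$. In the latter case, setting $\mathfrak{q} := \mathfrak{m} \setminus \{x_i\}$, we have $Q \subseteq \mathfrak{q}$. Applying the monomial-localization identity \cite[Lemma 4.6]{RNA} already invoked in Lemma \ref{Lem. 1} then places $Q$ in $\mathrm{Ass}_{R(\mathfrak{q})}(R(\mathfrak{q})/I(\mathfrak{q})^m)$.

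At this point the hypothesis does the work: because $I(\mathfrak{q})$ is normally torsion-free in $R(\mathfrak{q})$, the prime $Q$ already belongs to $\mathrm{Ass}_{R(\mathfrak{q})}(R(\mathfrak{q})/I(\mathfrak{q}))$. A second application of \cite[Lemma 4.6]{RNA}, this time in the opposite direction, lifts $Q$ back up to an associated prime of $I$ in $R$, and the assumption $\mathrm{Ass}_R(R/I) = \mathrm{Min}(I)$ forces $Q \in \mathrm{Min}(I)$, completing the inclusion.

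I do not expect any genuine obstacle, since the proof is a formal dual of Lemma \ref{Lem. 1}. The only point that deserves a moment of attention is the combinatorial observation that any monomial prime properly contained in $\mathfrak{m}$ must miss at least one variable, which is precisely what allows us to translate the problem into a hypothesis about $I(\mathfrak{m} \setminus \{x_i\})$. Once this is noticed, the two applications of \cite[Lemma 4.6]{RNA}, sandwiching the normal torsion-freeness of $I(\mathfrak{q})$, deliver the result immediately.
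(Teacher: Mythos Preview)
Your proposal is correct and follows essentially the same argument as the paper: both fix $m\geq 2$, take $Q\in\mathrm{Ass}_R(R/I^m)$, dispose of the case $Q=\mathfrak{m}$, and otherwise pass to $\mathfrak{q}=\mathfrak{m}\setminus\{x_i\}$ via \cite[Lemma 4.6]{RNA}, invoke the normal torsion-freeness of $I(\mathfrak{q})$, and then return to $R$ to conclude $Q\in\mathrm{Min}(I)$. The only cosmetic difference is that you spell out the choice $k=1$, $\mathfrak{p}=\mathfrak{m}$ and the second application of \cite[Lemma 4.6]{RNA} explicitly, whereas the paper leaves these implicit.
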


\begin{proof}
In the light of   $\mathrm{Ass}_R(R/I)=\mathrm{Min}(I)$, it is enough for us to  show that  $\mathrm{Ass}_R(R/I^k) \subseteq \mathrm{Min}(I) \cup \{\mathfrak{m}\}$ for all $k \geq 2.$ 
To achieve this, fix $k \geq 2$, and take an arbitrary element $Q \in \mathrm{Ass}_R(R/I^k)$. If $Q=\mathfrak{m}$, then the proof is over. Hence, let $Q\neq \mathfrak{m}$. 
On account of $Q$ is a monomial prime ideal, this implies that   $Q\subseteq \mathfrak{m}\setminus \{x_j\}$ for some $x_j \in \mathfrak{m}$.
Put  $\mathfrak{q}:=\mathfrak{m}\setminus \{x_j\}$. Because $I(\mathfrak{q})$ is normally torsion-free, we thus have  $\mathrm{Ass}_{R(\mathfrak{q})}(R(\mathfrak{q})/(I(\mathfrak{q}))^k) \subseteq \mathrm{Ass}_{R(\mathfrak{q})}(R(\mathfrak{q})/I(\mathfrak{q}))$.  Also, one can deduce from \cite[Lemma 4.6]{RNA} that  $Q\in \mathrm{Ass}_{R(\mathfrak{q})}(R(\mathfrak{q})/(I(\mathfrak{q}))^k)$, and so
$Q \in \mathrm{Ass}_{R(\mathfrak{q})}(R(\mathfrak{q})/I(\mathfrak{q}))$. This yields that  $Q \in\mathrm{Ass}_R(R/I)$, and hence  $Q\in \mathrm{Min}(I)$.  
This completes the proof. 
\end{proof}

As an immediate consequence of Lemma \ref{Lem. 2},  we obtain the following corollary:

\begin{corollary} \label{Cor. 1}
Let $I$ be a square-free  monomial ideal in a polynomial ring $R=K[x_1, \ldots, x_n]$ over a field $K$.  Let   $I(\mathfrak{m}\setminus \{x_i\})$ be  normally torsion-free  for all $i=1, \ldots, n$, where $\mathfrak{m}=(x_1, \ldots, x_n)$. Then $I$ is nearly normally torsion-free. 
\end{corollary}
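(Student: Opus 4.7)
The plan is to observe that Corollary \ref{Cor. 1} is essentially a direct specialization of Lemma \ref{Lem. 2} to the square-free setting. The only hypothesis of Lemma \ref{Lem. 2} that is not already assumed in the corollary is the condition $\mathrm{Ass}_R(R/I) = \mathrm{Min}(I)$, so the entire content of the proof is to verify this one equality for square-free monomial ideals and then invoke the lemma.

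First I would recall that for any square-free monomial ideal $I \subset R = K[x_1, \ldots, x_n]$, every associated prime of $I$ is a minimal prime; equivalently, $I$ has no embedded primes, so that $\mathrm{Ass}_R(R/I) = \mathrm{Min}(I)$. This fact is standard and was cited in Section~\ref{prem} as \cite[Corollary 1.3.6]{HH1} (it follows, for instance, from the fact that the minimal primary decomposition of a square-free monomial ideal is the intersection of the minimal monomial primes $P_{F^c}$ corresponding to the facets $F$ of the associated simplicial complex). Once this observation is made, all the hypotheses of Lemma \ref{Lem. 2} are satisfied: $\mathrm{Ass}_R(R/I) = \mathrm{Min}(I)$, and by assumption $I(\mathfrak{m} \setminus \{x_i\})$ is normally torsion-free for every $i = 1, \ldots, n$.

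Applying Lemma \ref{Lem. 2} (with $\mathfrak{p} = \mathfrak{m}$ playing the role of the distinguished monomial prime), I conclude that $I$ is nearly normally torsion-free, which is exactly the statement of the corollary. There is no genuine obstacle here; the only thing to be careful about is to cite the right structural fact about square-free monomial ideals so that the hypothesis of Lemma \ref{Lem. 2} on associated primes of $I$ itself is met. The proof is therefore essentially a one-line reduction.
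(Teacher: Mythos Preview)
Your proposal is correct and matches the paper's approach exactly: the paper states Corollary~\ref{Cor. 1} as an immediate consequence of Lemma~\ref{Lem. 2}, and the only missing hypothesis is $\mathrm{Ass}_R(R/I)=\mathrm{Min}(I)$, which holds for square-free monomial ideals by \cite[Corollary 1.3.6]{HH1} as noted in Section~\ref{prem}.
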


As an application of Lemma \ref{Lem. 2}, we give a class of nearly normally torsion-free monomial ideals in the subsequent  
theorem. To see this, one needs to recall from \cite{HV} that a monomial ideal is said to be a {\it monomial ideal of intersection type} when it can be presented as an intersection of powers of monomial prime ideals.

\begin{proposition} \label{App. 1}
 Let $R=K[x_1, \ldots, x_n]$ be a polynomial ring and 
 $I=\cap_{\mathfrak{p}\in \mathrm{Ass}_R(R/I)}\mathfrak{p}^{d_{\mathfrak{p}}}$  be a monomial ideal of intersection type such that for any $1\leq i \leq n$, there exists unique  
 $\mathfrak{p}\in \mathrm{Ass}_R(R/I)$ with  $I(\mathfrak{m}\setminus \{x_i\})=\mathfrak{p}^{d_{\mathfrak{p}}}$, where $\mathfrak{m}=(x_1, \ldots, x_n)$. Then $I$ is nearly normally torsion-free. 
 \end{proposition}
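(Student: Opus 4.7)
My plan is to directly apply Lemma \ref{Lem. 2}, which requires two inputs: that $\mathrm{Ass}_R(R/I)=\mathrm{Min}(I)$, and that the monomial localization $I(\mathfrak{m}\setminus\{x_i\})$ is normally torsion-free for each $i$. The statement's hypothesis is essentially tailored to deliver the second of these directly, while the first needs a short combinatorial analysis.

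First, I would handle the normal torsion-freeness of the localizations. By assumption, for every $i$ there is a unique $\mathfrak{p}\in \mathrm{Ass}_R(R/I)$ with $I(\mathfrak{m}\setminus\{x_i\})=\mathfrak{p}^{d_\mathfrak{p}}$, viewed in the smaller polynomial ring $R(\mathfrak{m}\setminus\{x_i\})$. Here $\mathfrak{p}^{d_\mathfrak{p}}$ is a power of a monomial prime ideal in a polynomial ring; a standard computation shows such a power is primary to $\mathfrak{p}$, has only $\mathfrak{p}$ as its associated prime, and is therefore normally torsion-free. This takes care of the second hypothesis of Lemma \ref{Lem. 2}.

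Next, I would verify $\mathrm{Ass}_R(R/I)=\mathrm{Min}(I)$ by extracting a combinatorial constraint from the uniqueness assumption. Because localizing the primary decomposition at $\mathfrak{m}\setminus\{x_i\}$ kills exactly those primary components $\mathfrak{p}^{d_\mathfrak{p}}$ with $x_i\in \mathfrak{p}$, one gets
\[
I(\mathfrak{m}\setminus\{x_i\}) \;=\; \bigcap_{\substack{\mathfrak{p}\in \mathrm{Ass}_R(R/I)\\ x_i\notin \mathfrak{p}}} \mathfrak{p}^{d_\mathfrak{p}},
\]
so the hypothesis forces exactly one $\mathfrak{p}\in \mathrm{Ass}_R(R/I)$ to miss each given $x_i$. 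Writing each $\mathfrak{p}$ as a subset of the variables it contains, this means the complementary subsets partition $\{x_1,\ldots,x_n\}$. A proper containment $\mathfrak{p}_1\subsetneq \mathfrak{p}_2$ (with $\mathfrak{p}_2\neq \mathfrak{m}$) would produce some $x_i\notin \mathfrak{p}_2$, hence $x_i\notin \mathfrak{p}_1$ as well, contradicting uniqueness. The residual case $\mathfrak{p}_2=\mathfrak{m}$ can be excluded by noting that the $\mathfrak{m}$-primary component $\mathfrak{m}^{d_\mathfrak{m}}$ contributes nothing to any localization $I(\mathfrak{m}\setminus\{x_i\})$, so it is redundant in the decomposition and may be removed, reducing to the case with no embedded prime. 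Thus the associated primes form an antichain, giving $\mathrm{Ass}_R(R/I)=\mathrm{Min}(I)$.

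With both hypotheses of Lemma \ref{Lem. 2} in hand, its conclusion yields that $I$ is nearly normally torsion-free. The main obstacle is the second step: the uniqueness assumption is a very clean combinatorial constraint, but translating it into the non-existence of embedded primes requires the partition argument together with a careful treatment of the potential role of $\mathfrak{m}$; once that is settled, the rest of the proof is immediate from Lemma \ref{Lem. 2}.
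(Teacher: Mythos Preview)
Your overall strategy coincides with the paper's: both reduce to Lemma~\ref{Lem. 2} by checking that each localization $I(\mathfrak{m}\setminus\{x_i\})$ is a power of a monomial prime (hence normally torsion-free) and that $\mathrm{Ass}_R(R/I)=\mathrm{Min}(I)$. The paper does the second step by simply asserting that the hypothesis forces $I=\bigcap_{i=1}^n(\mathfrak{m}\setminus\{x_i\})^{d_i}$, whereas you supply a partition argument; so your route is a more detailed version of the same approach.

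There is, however, a genuine gap in your treatment of the case $\mathfrak{m}\in\mathrm{Ass}_R(R/I)$. You write that the component $\mathfrak{m}^{d_{\mathfrak{m}}}$ ``contributes nothing to any localization $I(\mathfrak{m}\setminus\{x_i\})$, so it is redundant in the decomposition and may be removed''. The first clause is true, but the conclusion is not: since $I=\bigcap_{\mathfrak{p}\in\mathrm{Ass}_R(R/I)}\mathfrak{p}^{d_{\mathfrak{p}}}$ is already an irredundant primary decomposition (that is what indexing over $\mathrm{Ass}_R(R/I)$ means), no component can be dropped without changing $I$. A concrete witness is $n=2$ and
\[
I=(x_1^2x_2,\,x_1x_2^2)=(x_1)\cap(x_2)\cap(x_1,x_2)^3 .
\]
Here $\mathrm{Ass}_R(R/I)=\{(x_1),(x_2),\mathfrak{m}\}$, the $\mathfrak{m}$-primary component is not redundant, and yet the stated uniqueness hypothesis is satisfied: $I(\mathfrak{m}\setminus\{x_1\})=(x_2)$ with unique $\mathfrak{p}=(x_2)$, and symmetrically for $i=2$. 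Since $\mathrm{Ass}_R(R/I)\neq\mathrm{Min}(I)$, this $I$ is not nearly normally torsion-free under the paper's definition. In other words, your redundancy argument fails, and in fact the proposition as literally stated needs the extra assumption $\mathfrak{m}\notin\mathrm{Ass}_R(R/I)$; the paper's proof tacitly imposes this when it writes down the specific form of $I$. Once that assumption is added, your partition argument is valid and the rest of your proof goes through exactly as written.
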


\begin{proof}
 We first note that the assumption concludes  that  the monomial ideal $I$ must have  the following form 
 $$I=(\mathfrak{m}\setminus \{x_1\})^{d_1} \cap (\mathfrak{m}\setminus \{x_2\})^{d_2} \cap \cdots \cap  (\mathfrak{m}\setminus \{x_n\})^{d_n},$$ 
 for some positive integers $d_1, \ldots, d_n$. 
 This gives rise to  $\mathrm{Ass}_R(R/I)=\mathrm{Min}(I)$.  In addition, since $(\mathfrak{m}\setminus \{x_i\})^{d_i}$, for each $i=1, \ldots, n$, is  normally torsion-free, the claim can be deduced promptly from Lemma \ref{Lem. 2}. 
\end{proof}


To show Lemma \ref{Lem. Multipe}, we require the following  auxiliary result. 

\begin{theorem} (\cite[Theorem 5.2]{KHN2})\label{5.2KHN}
Let $I$ be a monomial ideal of $R$ with $\mathcal{G}(I) =\{u_1,\ldots,u_m\}$. Also assume that there exists a monomial $h=x_{j_1}^{b_1}\cdots x_{j_s}^{b_s}$ such that $h | u_i$ for all $i=1,\ldots,m$. By setting $J:=(u_1/h,\ldots,u_m/h)$, we have   $$\mathrm{Ass}_R(R/I)=\mathrm{Ass}_R(R/J)\cup\{ (x_{j_1}),\ldots,(x_{j_s})\}.$$
\end{theorem}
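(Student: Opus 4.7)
The plan is to prove both containments by exploiting the factorization $I = h\cdot J$, treating the two inclusions separately. For the inclusion $\mathrm{Ass}_R(R/I) \subseteq \mathrm{Ass}_R(R/J) \cup \{(x_{j_1}),\ldots,(x_{j_s})\}$, the key tool is the short exact sequence
\[
0 \longrightarrow R/J \xrightarrow{\ \cdot h\ } R/I \longrightarrow R/(h) \longrightarrow 0,
\]
where the left map sends $\bar r \mapsto \overline{hr}$. Well-definedness is immediate from $hJ = I$, injectivity holds because $R$ is a domain (if $hr \in hJ$ then $r\in J$), and the cokernel equals $R/(I+(h)) = R/(h)$ since $I \subseteq (h)$. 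Standard behavior of associated primes on short exact sequences then yields $\mathrm{Ass}(R/I) \subseteq \mathrm{Ass}(R/J)\cup \mathrm{Ass}(R/(h))$, and the irredundant primary decomposition $(h) = \bigcap_t (x_{j_t}^{b_t})$ identifies $\mathrm{Ass}(R/(h)) = \{(x_{j_1}),\ldots,(x_{j_s})\}$.

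For the reverse containment I would argue in two steps. First, for $\mathfrak{p} \in \mathrm{Ass}_R(R/J)$, writing $\mathfrak{p} = (J:v)$ for some monomial $v$ and using that multiplication by $h$ cancels in a domain gives $(I:hv) = (hJ:hv) = (J:v) = \mathfrak{p}$, so $\mathfrak{p} \in \mathrm{Ass}_R(R/I)$. Second, to realize each $(x_{j_t})$ as an associated prime of $I$, fix $t$ and set $\beta_t = \min_{1\le j\le m}\deg_{x_{j_t}}(u_j)$; note $\beta_t \ge b_t \ge 1$. Choose a generator $u_{i_0}$ attaining this minimum, and put $v = u_{i_0}/x_{j_t}$, which is a monomial. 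Then $x_{j_t}\cdot v = u_{i_0} \in I$ gives $x_{j_t} \in (I:v)$. Conversely, using the standard formula $(I:v) = (u_j/\gcd(u_j,v):1\le j\le m)$, the minimality of $\beta_t$ forces $\deg_{x_{j_t}}(u_j) > \deg_{x_{j_t}}(v)$ for every $j$, so $x_{j_t}$ divides each generator $u_j/\gcd(u_j,v)$. Hence $(I:v) \subseteq (x_{j_t})$, and combining the two inclusions yields $(I:v) = (x_{j_t})$, proving $(x_{j_t}) \in \mathrm{Ass}_R(R/I)$.

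The main obstacle is showing the $(x_{j_t})$ are genuinely associated (not merely permitted by the short exact sequence): an arbitrary generator $u_j$ will not work, because the extra factors in $u_j/\gcd(u_j,v)$ could contribute other variables to $(I:v)$ and thus only produce a larger associated prime. The minimizing choice of $u_{i_0}$ is exactly what prevents this. The other ingredients—the short exact sequence, the colon-cancellation identity $(hJ:hv) = (J:v)$, and the primary decomposition of $(h)$—are entirely routine.
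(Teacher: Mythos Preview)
Your proof is correct. Note, however, that the paper does not give its own proof of this statement: it is quoted verbatim as \cite[Theorem 5.2]{KHN2} and used as a black box in the proof of Lemma~\ref{Lem. Multipe}. So there is no ``paper's proof'' to compare against.

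On the mathematics itself, your argument is clean and complete. The short exact sequence
\[
0 \longrightarrow R/J \xrightarrow{\ \cdot h\ } R/I \longrightarrow R/(h) \longrightarrow 0
\]
is correctly set up (the cokernel identification uses $I\subseteq (h)$), and it immediately gives the inclusion $\mathrm{Ass}(R/I)\subseteq \mathrm{Ass}(R/J)\cup\{(x_{j_1}),\ldots,(x_{j_s})\}$. For the reverse inclusion, the colon-cancellation $(hJ:hv)=(J:v)$ handles $\mathrm{Ass}(R/J)\subseteq \mathrm{Ass}(R/I)$, and your minimizing choice of $u_{i_0}$ is exactly the right witness for $(x_{j_t})\in\mathrm{Ass}(R/I)$: the computation $\deg_{x_{j_t}}(\gcd(u_j,v))=\beta_t-1<\deg_{x_{j_t}}(u_j)$ forces $x_{j_t}\mid u_j/\gcd(u_j,v)$ for every $j$, so $(I:v)=(x_{j_t})$. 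One tiny point you left implicit but which is immediate: $v\notin I$ because $\deg_{x_{j_t}}(v)=\beta_t-1$ is strictly below the $x_{j_t}$-degree of every generator.
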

The next   lemma says that  a monomial ideal is nearly normally torsion-free if and only if its  monomial multiple  is nearly normally torsion-free under certain conditions. It is an updated version of 
\cite[Lemma 3.5]{HLR}  and  \cite[Lemma 3.12]{SN}. 

\begin{lemma}\label{Lem. Multipe}
Let  $I$ be a monomial ideal in a polynomial ring $R=K[x_1, \ldots, x_n]$, and $h$  be a monomial in $R$ such that $\mathrm{gcd}(h,u)=1$ for all $u\in \mathcal{G}(I)$. Then  $I$ is nearly normally torsion-free  if and only if $hI$ is nearly normally torsion-free. 
\end{lemma}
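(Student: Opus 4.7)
The plan is to reduce the statement to a clean comparison of associated primes of $I^k$ and $(hI)^k$, using Theorem~\ref{5.2KHN} as the workhorse. Write $h=x_{j_1}^{b_1}\cdots x_{j_s}^{b_s}$. Because $\gcd(h,u)=1$ for every $u\in\mathcal{G}(I)$, one has $\mathcal{G}(hI)=\{hu:u\in\mathcal{G}(I)\}$, and more generally, for every $k\geq 1$, $(hI)^k=h^kI^k$ with $h^k$ dividing each element of $\mathcal{G}((hI)^k)$ while the cofactor ideal is precisely $I^k$.

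Applying Theorem~\ref{5.2KHN} to $(hI)^k=h^kI^k$ with common factor $h^k=x_{j_1}^{kb_1}\cdots x_{j_s}^{kb_s}$ yields
\[
\mathrm{Ass}_R(R/(hI)^k)=\mathrm{Ass}_R(R/I^k)\cup\{(x_{j_1}),\ldots,(x_{j_s})\}
\]
for all $k\geq 1$. Next I would identify the minimal primes on both sides. Because $\gcd(h,u)=1$ for all $u\in\mathcal{G}(I)$, no $x_{j_r}$ divides any generator of $I$, hence $(x_{j_r})\notin V(I)$ and in particular $(x_{j_r})\notin\mathrm{Min}(I)$; conversely no element of $\mathrm{Min}(I)$ is strictly contained in some $(x_{j_r})$. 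Combined with the elementary fact that a monomial prime contains $hI$ iff it contains $(h)$ or contains $I$, this gives
\[
\mathrm{Min}(hI)=\mathrm{Min}(I)\cup\{(x_{j_1}),\ldots,(x_{j_s})\}.
\]
Subtracting $\mathrm{Min}(hI)$ from the displayed equation above and using that $\mathrm{Min}(I)\subseteq\mathrm{Ass}_R(R/I^k)$ I obtain, for every $k\geq 1$, the key identity
\[
\mathrm{Ass}_R(R/(hI)^k)\setminus\mathrm{Min}(hI)=\mathrm{Ass}_R(R/I^k)\setminus\mathrm{Min}(I).
\]

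With this identity the equivalence becomes essentially a tautology. Suppose $I$ is nearly normally torsion-free with threshold $k$ and exceptional prime $\mathfrak{p}$. For $1\leq m\leq k$ the right-hand side of the identity is empty, so $\mathrm{Ass}_R(R/(hI)^m)=\mathrm{Min}(hI)$; for $m\geq k+1$ the right-hand side is contained in $\{\mathfrak{p}\}$, so $\mathrm{Ass}_R(R/(hI)^m)\subseteq\mathrm{Min}(hI)\cup\{\mathfrak{p}\}$. Hence $hI$ is nearly normally torsion-free with the same threshold and same exceptional prime $\mathfrak{p}$. The reverse implication is symmetric: if $hI$ is nearly normally torsion-free with exceptional prime $\mathfrak{p}'$, then $\mathfrak{p}'\notin\{(x_{j_1}),\ldots,(x_{j_s})\}$ (these lie in $\mathrm{Min}(hI)$), and the identity forces the corresponding inclusions for $I$.

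The main obstacle — and the step that needs the most care — is the bookkeeping in step two: verifying that no $(x_{j_r})$ accidentally appears in $\mathrm{Min}(I)$ and that no prime in $\mathrm{Min}(I)$ becomes non-minimal when the $(x_{j_r})$'s are appended. Both of these are controlled by the hypothesis $\gcd(h,u)=1$ for all $u\in\mathcal{G}(I)$, so after that verification the proof is formal.
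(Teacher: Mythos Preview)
Your proposal is correct and follows essentially the same approach as the paper: both invoke Theorem~\ref{5.2KHN} (applied to $(hI)^\ell=h^\ell I^\ell$) to obtain $\mathrm{Ass}_R(R/(hI)^\ell)=\mathrm{Ass}_R(R/I^\ell)\cup\{(x_{j_1}),\ldots,(x_{j_s})\}$ and then combine this with $\mathrm{Min}(hI)=\mathrm{Min}(I)\cup\{(x_{j_1}),\ldots,(x_{j_s})\}$. Your consolidation of these two facts into the single identity $\mathrm{Ass}_R(R/(hI)^k)\setminus\mathrm{Min}(hI)=\mathrm{Ass}_R(R/I^k)\setminus\mathrm{Min}(I)$ is a tidy repackaging, but not a genuinely different route. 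One small quibble: the parenthetical claim that the exceptional prime $\mathfrak{p}'$ of $hI$ must avoid $\{(x_{j_1}),\ldots,(x_{j_s})\}$ is not forced by the definition (the exceptional prime is allowed to be redundant), but this does not affect the argument since your key identity transfers the required inclusions to $I$ regardless of what $\mathfrak{p}'$ is.
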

\begin{proof}
$(\Rightarrow)$ Assume that $I$ is nearly normally torsion-free.  Let $h=x_{j_1}^{b_1}\cdots x_{j_s}^{b_s}$ with $j_1, \ldots, j_s \in \{1, \ldots, n\}$. On account of  Theorem \ref{5.2KHN}, we obtain, for all $\ell$,  
\begin{equation}
\mathrm{Ass}_R(R/(hI)^{\ell})=\mathrm{Ass}_R(R/I^{\ell})\cup\{ (x_{j_1}),\ldots,(x_{j_s})\}. \label{12}
\end{equation}
Due to  $\mathrm{gcd}(h,u)=1$ for all $u\in \mathcal{G}(I)$, it is routine to check that 
\begin{equation}
\mathrm{Min}(hI)=\mathrm{Min}(I)\cup\{ (x_{j_1}),\ldots,(x_{j_s})\}. \label{13}
\end{equation}
Since  $I$ is nearly normally torsion-free,  there exist a positive integer $k$ and a monomial prime ideal
 $\mathfrak{p}$ such that  
 $\mathrm{Ass}_R(R/I^m)=\mathrm{Min}(I)$ for all $1\leq m\leq k$, and 
 $\mathrm{Ass}_R(R/I^m) \subseteq \mathrm{Min}(I) \cup \{\mathfrak{p}\}$ for all $m \geq k+1$. Select an arbitrary element $\mathfrak{q}\in \mathrm{Ass}_R(R/(hI)^m)$. 
 Then \eqref{12} implies that $\mathfrak{q}\in \mathrm{Ass}_R(R/I^m)\cup\{ (x_{j_1}),\ldots,(x_{j_s})\}$. 
  If $1\leq m \leq k$, then \eqref{13} yields that 
 $\mathfrak{q}\in \mathrm{Min}(hI)$. Hence, let  $m\geq k+1$. The claim follows readily from \eqref{12}, \eqref{13}, and $\mathrm{Ass}_R(R/I^m) \subseteq \mathrm{Min}(I) \cup \{\mathfrak{p}\}$. 
 
$(\Leftarrow)$  Let $hI$ be nearly normally torsion-free. 
This means that there exist a positive integer $k$ and a monomial prime ideal
 $\mathfrak{p}$ such that  
 $\mathrm{Ass}_R(R/(hI)^m)=\mathrm{Min}(hI)$ for all $1\leq m\leq k$, and 
 $\mathrm{Ass}_R(R/(hI)^m) \subseteq \mathrm{Min}(hI) \cup \{\mathfrak{p}\}$ for all $m \geq k+1$. Take  an arbitrary element $\mathfrak{q}\in \mathrm{Ass}_R(R/I^m)$. 
Because $\mathfrak{q}\in \mathrm{Ass}_R(R/I^m)$, we get $\mathfrak{q} \notin 
 \{ (x_{j_1}),\ldots,(x_{j_s})\}$. 
If $1\leq m \leq k$, then \eqref{12} gives  that 
 $\mathfrak{q}\in \mathrm{Ass}_R(R/(hI)^m)$, and so $\mathfrak{q}\in \mathrm{Min}(hI)$. As $\mathfrak{q} \notin 
 \{ (x_{j_1}),\ldots,(x_{j_s})\}$, we gain  $\mathfrak{q}\in \mathrm{Min}(I)$. We thus assume that $m\geq k+1$. 
 One can derive the assertion according to the facts $\mathfrak{q} \notin  \{ (x_{j_1}),\ldots,(x_{j_s})\}$, 
 \eqref{12}, \eqref{13}, and $\mathrm{Ass}_R(R/(hI)^m) \subseteq \mathrm{Min}(hI) \cup \{\mathfrak{p}\}$. 
\end{proof}

We conclude this section  by observing that nearly normally torsion-freeness does not imply the strong persistence property. It is not known if nearly normally torsion-freeness implies the persistence property.

\begin{example}
Let $R=K[x_1, x_2, x_3]$ be the polynomial ring  over a field $K$ and $I:=(x_2^4, x_1x_2^3, x_1^3x_2, x_1^4x_3)$  be a monomial  ideal of  $R$.  On account of  
$$
I=(x_1, x_2^4) \cap (x_1^3, x_2^3) \cap (x_1^4, x_2) \cap (x_2, x_3), $$ 
one can conclude that  
$\mathrm{Ass}_R(R/I)=\mathrm{Min}(I)=\{(x_1, x_2),     (x_2, x_3)\}.$
We claim that $\mathrm{Ass}_R(R/I^m)=\mathrm{Min}(I) \cup
\{(x_1, x_2, x_3)\}$ for all $m\geq 2$. To prove  this claim, fix $m\geq 2$.  In what follows, we verify that 
$(x_1, x_2, x_3)\in \mathrm{Ass}_R(R/I^m)$. To establish this, we show  that 
$(I^m:_Rv)=(x_1, x_2, x_3)$, where $v:=x_1^{3m-1} x_2^{m+1}$. To see this, one may  consider the following statements:
\begin{itemize}
\item[(i)] Since   $vx_1=x_1^{3m}x_2^{m+1}=
(x_1^3x_2)^mx_2$ and $x_1^3x_2\in I$, we get $vx_1\in I^m,$ and so  $x_1 \in (I^m:_Rv)$;
\item[(ii)] Due to  $vx_2=x_1^{3m-1}x_2^{m+2}=
(x_1^3x_2)^{m-1} (x_1^2x_2^3)$ and $x_1^2x_2^3\in I$, one has $vx_2 \in I^m,$ and hence   $x_2 \in (I^m:_Rv)$;
\item[(iii)] As  $vx_3=x_1^{3m-1}x_2^{m+1}x_3 =(x_1^3x_2)^{m-2}(x_1^5x_2^3x_3)$ and $x_1^5x_2^3x_3\in I^2$, we obtain $vx_3 \in I^m,$ and thus  $x_3 \in (I^m:_Rv)$.
\end{itemize} 
    Consequently, $(x_1, x_2, x_3) \subseteq (I^m:_Rv).$   For the converse,  let $v \in I^m$. This leads to 
there exist monomials $h_1, \ldots, h_m \in \mathcal{G}(I)$ such that   $x_3\nmid h_i$ for each $i=1, \ldots, m$, and  $h_1 \cdots h_m |  x_1^{3m-1} x_2^{m+1}$.  Hence, we have the following equality 
\begin{equation}
h_1 \cdots h_m=(x_2^4)^{{\alpha}_1} (x_1x_2^3)^{{\alpha}_2} (x_1^3x_2)^{{\alpha}_3}, 
\label{14}
\end{equation}
for some nonnegative integers $ {\alpha}_1, \alpha_2,  {\alpha}_3$ with ${\alpha}_1 +  \alpha_2 +  {\alpha}_3=m.$
Especially, one can derive from (\ref{14}) that 
\begin{equation}
4\alpha_1 + 3\alpha_2 + \alpha_3  \leq m+1, \label{15}
\end{equation}
and 
\begin{equation}
\alpha_2 + 3\alpha_3  \leq 3m-1. \label{16}
\end{equation}
Since $\sum_{i=1}^3 \alpha_i =m$, we obtain from (\ref{15}) that 
$m+3\alpha_1 + 2 \alpha_2  \leq m+1$, and so $3\alpha_1 + 2\alpha_2 \leq 1$. We thus have 
$\alpha_1= \alpha_2=0$, and so $\alpha_3 =m$. Moreover, it follows from (\ref{16}) that $3m  \leq 3m-1$,   which is a contradiction. Therefore, $v \notin I^m$, and so $(I^m:_Rv) =
(x_1, x_2, x_3)$. This gives rise to  $(x_1, x_2, x_3) \in 
\mathrm{Ass}_R(R/I^m)$ for all $m\geq 2$. Hence, 
$\mathrm{Ass}_R(R/I^m)=\mathrm{Min}(I) \cup
\{(x_1, x_2, x_3)\}$ for all $m\geq 2$. This 
means that $I$ has the persistence property and is a nearly normally torsion-free ideal.
On the other hand, using Macaulay2 \cite{GS} shows that 
$(I^2:_RI) \neq I$. We therefore deduce that $I$ does not satisfy the strong persistence property. 
\end{example}


\section{Nearly normally torsion-freeness and cover ideals}\label{coverideals}

In this section, our goal is to characterize all finite simple connected graphs such that their cover ideals are nearly normally torsion-free. To do this, one has to recall some results which will be used in the proof of  Theorem \ref{Main. 1}. We begin with the following lemma.

 \begin{lemma} \cite[Lemma 2.11]{FHV2} \label{FHV2}
  Let  $\mathcal{H}$ be a finite simple hypergraph on 
  $V = \{x_1, \ldots , x_n\}$ with cover ideal 
  $J(\mathcal{H}) \subseteq R=k[x_1, \ldots, x_n]$. 
  Then 
  $$P = (x_{i_1} , \ldots , x_{i_r}) \in \mathrm{Ass}(R/J(\mathcal{H})^d) \Leftrightarrow P = (x_{i_1} , \ldots , x_{i_r}) \in \mathrm{Ass}(k[P]/J(\mathcal{H}_P)^d),$$
where $k[P] =k[x_{i_1} , \ldots , x_{i_r}]$, and $\mathcal{H}_P$ is the induced hypergraph of $\mathcal{H}$ on the vertex set $P = \{x_{i_1} , \ldots , x_{i_r}\} \subseteq V$.  
\end{lemma}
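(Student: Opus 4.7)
The plan is to use monomial localization to transfer the associated prime question from $R$ to the smaller polynomial ring $k[P]$, and then to identify the localized cover ideal $J(\mathcal{H})(P)$ with $J(\mathcal{H}_P)$. The paper already invokes \cite[Lemma 4.6]{RNA}, which says that for any monomial ideal $I \subset R$ and monomial prime $\mathfrak{q}$, one has $\mathrm{Ass}_{R(\mathfrak{q})}(R(\mathfrak{q})/I(\mathfrak{q})) = \{Q \in \mathrm{Ass}_R(R/I) : Q \subseteq \mathfrak{q}\}$. Specializing to $\mathfrak{q}=P$ and $Q=P$ gives the workhorse equivalence
$$P \in \mathrm{Ass}_R(R/I) \iff P \in \mathrm{Ass}_{k[P]}(k[P]/I(P)),$$
which I would apply with $I=J(\mathcal{H})^d$. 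Because monomial localization is a $K$-algebra homomorphism, it commutes with taking powers, so $J(\mathcal{H})^d(P) = J(\mathcal{H})(P)^d$. Hence it suffices to prove the ideal-theoretic identity $J(\mathcal{H})(P) = J(\mathcal{H}_P)$; two applications of the workhorse equivalence (one on $R$, one on $k[P]$) then close the argument.

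To establish $J(\mathcal{H})(P) = J(\mathcal{H}_P)$, I would start from the primary decomposition recalled in Section~\ref{prem}: since $J(\mathcal{H}) = I(\mathcal{H})^{\vee}$,
$$J(\mathcal{H}) = \bigcap_{E \in E_{\mathcal{H}}} P_E, \qquad P_E = (x_j : j \in E).$$
Monomial localization commutes with finite intersections, so $J(\mathcal{H})(P) = \bigcap_E P_E(P)$. The factor $P_E(P)$ is the unit ideal $k[P]$ as soon as some $x_j \in E$ fails to lie in $P$ (that variable is sent to $1$), and equals $(x_j : j \in E)$ in $k[P]$ when $E \subseteq P$; the unit factors drop from the intersection, leaving $J(\mathcal{H})(P) = \bigcap_{E \subseteq P} (x_j : j \in E)$, which is exactly the cover ideal of the subhypergraph on $P$.

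The main obstacle is conceptual rather than technical: the calculation forces one to adopt the \emph{strongly induced} convention, in which $\mathcal{H}_P$ keeps only those $E \in E_{\mathcal{H}}$ with $E \subseteq P$, rather than the traces $E \cap P$ used in the definition from Section~\ref{prem}. I would flag this when invoking the lemma, so that its statement is read with the convention consistent with \cite{FHV2}; otherwise small examples (e.g.\ $\mathcal{H}$ with a single edge $\{x_1,x_2\}$ and $P=(x_1)$) already break the equivalence. With that caveat in place, no further combinatorial input is required and the equivalence follows.
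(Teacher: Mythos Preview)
The paper does not supply its own proof of this lemma; it is quoted verbatim from \cite[Lemma~2.11]{FHV2} and used as a black box. So there is no in-paper argument to compare your proposal against.

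That said, your outline is a correct and standard way to prove the result: reduce via monomial localization using \cite[Lemma~4.6]{RNA}, use that localization commutes with powers, and identify $J(\mathcal{H})(P)$ with $J(\mathcal{H}_P)$ through the primary decomposition $J(\mathcal{H})=\bigcap_{E\in E_{\mathcal{H}}} P_E$. Your observation about the two competing notions of ``induced subhypergraph'' is on point: the lemma, as stated in \cite{FHV2}, requires the \emph{strongly induced} convention (retain only edges $E\subseteq P$), whereas the definition recorded in Section~\ref{prem} of this paper takes traces $E\cap P$. Your counterexample with a single edge $\{x_1,x_2\}$ and $P=(x_1)$ indeed breaks the equivalence under the trace convention, so flagging this discrepancy is the right call. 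In the paper's actual applications (Theorems~\ref{Main. 1} and~\ref{NTF1}) the prime $P$ arises as the span of an induced odd cycle or an induced subhypergraph already sitting inside $\mathcal{H}$, so the two conventions happen to coincide there; but your caveat is necessary for the lemma to hold in the generality stated.
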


In the following proposition, we investigate  the associated primes of powers of the cover ideals of odd cycle graphs.

 \begin{proposition}\label{Pro. 1} \cite[Proposition 3.6]{NKA}
 Suppose that $C_{2n+1}$ is  a cycle graph on the  vertex set $[2n+1]$, $R=K[x_1, \ldots, x_{2n+1}]$ is a  polynomial ring over a field $K$,
 and $\mathfrak{m}$ is the unique homogeneous maximal ideal  of $R$. Then  $$\mathrm{Ass}_R(R/(J(C_{2n+1}))^s)= \mathrm{Ass}_R(R/J(C_{2n+1}))\cup \{\mathfrak{m}\},$$  for all $s\geq 2$. In particular, 
 $$\mathrm{Ass}^\infty(J(C_{2n+1}))=\{(x_i, x_{i+1})~: ~ i=1, \ldots 2n\}\cup\{(x_{2n+1}, x_1)\}\cup \{\mathfrak{m}\}.$$
  \end{proposition}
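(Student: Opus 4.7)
The plan is to prove the displayed equality by establishing both containments, from which the description of $\mathrm{Ass}^\infty(J(C_{2n+1}))$ drops out by taking the union over $s \geq 1$. For the forward containment $\mathrm{Ass}_R(R/J(C_{2n+1})) \cup \{\mathfrak{m}\} \subseteq \mathrm{Ass}_R(R/J(C_{2n+1})^s)$: since $J(C_{2n+1})$ is square-free, its associated primes coincide with its minimal primes (the edge primes $(x_i,x_{i+1})$), and these persist in every power because $\mathrm{Min}(I)=\mathrm{Min}(I^s)\subseteq \mathrm{Ass}_R(R/I^s)$ for any ideal $I$. To show $\mathfrak{m}\in \mathrm{Ass}_R(R/J(C_{2n+1})^s)$ for $s\geq 2$, I would exhibit an explicit monomial $v_s$ with $(J(C_{2n+1})^s:_R v_s)=\mathfrak{m}$. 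For $s=2$, the candidate $v_2 = x_1 x_2\cdots x_{2n+1}$ works: its degree $2n+1$ is strictly below the minimum-generator degree $2(n+1)$ of $J(C_{2n+1})^2$, so $v_2\notin J(C_{2n+1})^2$; and for each $i$ the bipartite path $C_{2n+1}\setminus\{x_i\}$ has a proper $2$-coloring yielding two minimal vertex covers $W_1,W_2$ of $C_{2n+1}$, both containing $x_i$ and together covering every other vertex exactly once, so $x_i v_2 = x_{W_1}x_{W_2}\in J(C_{2n+1})^2$. For $s\geq 3$ the same pattern generalises, producing a witness $v_s$ of degree $(n+1)s-1$ built by multiplying $v_2$ by a suitable product of $s-2$ additional minimal vertex covers chosen so that each $x_i v_s$ still admits a decomposition into exactly $s$ minimal covers.

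For the reverse containment, let $P=(x_{i_1},\ldots,x_{i_r})\in \mathrm{Ass}_R(R/J(C_{2n+1})^s)$ with $P\neq \mathfrak{m}$; since associated primes of monomial ideals are monomial primes and $\mathfrak{m}$ is the unique maximal monomial prime, necessarily $r<2n+1$. By Lemma~\ref{FHV2}, $P$ sits as the maximal ideal of $k[P]$ inside $\mathrm{Ass}_{k[P]}(k[P]/J((C_{2n+1})_P)^s)$. The induced subgraph $(C_{2n+1})_P$ is obtained by deleting at least one vertex from the odd cycle, hence is a disjoint union of paths, and in particular bipartite. By the classical theorem that cover ideals of bipartite graphs are normally torsion-free,
\[
\mathrm{Ass}_{k[P]}(k[P]/J((C_{2n+1})_P)^s)=\mathrm{Min}(J((C_{2n+1})_P)) = \{(x_j,x_k) : \{j,k\}\in E((C_{2n+1})_P)\},
\]
so $P$ must be an edge prime of $C_{2n+1}$, which lies in $\mathrm{Ass}_R(R/J(C_{2n+1}))$, completing the argument.

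The principal technical difficulty is the uniform construction of the witness $v_s$: the case $s=2$ rests transparently on the bipartiteness of $C_{2n+1}\setminus\{x_i\}$, but for $s\geq 3$ one must carefully track the vertex multiplicities across the $s$ chosen minimal vertex covers to guarantee simultaneously that $v_s\notin J(C_{2n+1})^s$ and that $x_i v_s$ splits as a product of exactly $s$ such covers for every $i$.
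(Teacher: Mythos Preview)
The paper does not supply its own proof of this proposition; it is quoted verbatim from \cite[Proposition~3.6]{NKA} and used as a black box in the proof of Theorem~\ref{Main. 1}. So there is nothing in the present paper to compare your argument against.

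That said, your outline is sound. The reverse containment is exactly right: Lemma~\ref{FHV2} reduces to the induced subgraph on a proper vertex subset, which is a disjoint union of paths and hence bipartite, and then \cite[Corollary~2.6]{GRV} (normally torsion-freeness of cover ideals of bipartite graphs) forces $P$ to be an edge prime. For the forward containment, your witness $v_2=x_1\cdots x_{2n+1}$ and the two covers $W_1=\{i\}\cup\{i+1,i+3,\dots,i-2\}$, $W_2=\{i\}\cup\{i+2,i+4,\dots,i-1\}$ do give $x_iv_2=x_{W_1}x_{W_2}$ exactly.

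One remark: you flag the $s\geq 3$ case as the ``principal technical difficulty,'' but it is in fact immediate from $s=2$. Take $v_s=v_2\cdot g^{s-2}$ for \emph{any} single minimal-cover monomial $g\in\mathcal{G}(J(C_{2n+1}))$. Then $\deg v_s=(2n+1)+(s-2)(n+1)=s(n+1)-1$, which is strictly below the minimal degree $s(n+1)$ in $J(C_{2n+1})^s$, so $v_s\notin J(C_{2n+1})^s$; and for every $i$,
\[
x_i v_s=(x_i v_2)\,g^{s-2}\in J(C_{2n+1})^2\cdot J(C_{2n+1})^{s-2}=J(C_{2n+1})^s.
\]
No careful tracking of multiplicities or special choice of the $s-2$ extra covers is needed.
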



  Next   theorem   explores  the relation between associated primes of powers of the cover ideal of the union of two   finite simple  graphs  with the associated primes of powers of the cover ideals of each of them, under the condition that they have only one common vertex.

 \begin{theorem} \cite[Theorem 11]{KNT} \label{IntersectionOne}
Let $G=(V(G), E(G))$ and $H=(V(H), E(H))$ be two finite simple connected graphs such that $|V(G)\cap V(H)|=1$. Let $L=(V(L), E(L))$  be the  finite simple graph such that   $V(L):=V(G) \cup V(H)$ and  $E(L):=E(G) \cup E(H)$.
 Then 
$$\mathrm{Ass}_{R}(R/J(L)^s)=\mathrm{Ass}_{R_1}(R_1/J(G)^s)\cup \mathrm{Ass}_{R_2}(R_2/J(H)^s),$$ for all  $s$, where $R_1=K[ x_\alpha : \alpha\in V(G)]$, 
$R_2=K[ x_\alpha : \alpha\in V(H)]$,
 and  $R=K[ x_\alpha : \alpha\in V(L)]$. 
\end{theorem}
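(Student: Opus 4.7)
The plan is to route the argument through the ideal-theoretic identity
\[
J(L)^{s} \;=\; J(G)^{s} R \,\cap\, J(H)^{s} R \qquad \text{for every } s \geq 1,
\]
and then read off the associated primes from this description. The inclusion $J(L)^{s} \subseteq J(G)^{s} R \cap J(H)^{s} R$ is immediate from $J(L) = J(G) R \cap J(H) R$, which follows by writing each cover ideal as the intersection of the primes $(x_{i}, x_{j})$ over edges $\{i,j\}$ and partitioning $E(L) = E(G) \sqcup E(H)$. For the reverse inclusion, I would take a monomial $m$ in $J(G)^{s} R \cap J(H)^{s} R$ and produce minimal vertex covers $W_{1}^{(1)}, \ldots, W_{1}^{(s)}$ of $G$ and $W_{2}^{(1)}, \ldots, W_{2}^{(s)}$ of $H$ such that $x^{A} := \prod_{j} x^{W_{1}^{(j)}}$ and $x^{B} := \prod_{j} x^{W_{2}^{(j)}}$ both divide $m$. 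The combinatorial crux is to permute the index $j$ so that the occurrences $v \in W_{1}^{(j)}$ and $v \in W_{2}^{(j)}$ are matched as often as possible; comparing exponents at $x_{v}$ (the only variable common to $R_{1}$ and $R_{2}$) then yields $\prod_{j} x^{W_{1}^{(j)} \cup W_{2}^{(j)}} = \mathrm{lcm}(x^{A}, x^{B})$, which divides $m$. Since each $W_{1}^{(j)} \cup W_{2}^{(j)}$ is a minimal vertex cover of $L$, it follows that $m \in J(L)^{s}$.

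Once the identity is secured, the short exact sequence
\[
0 \;\longrightarrow\; R/J(L)^{s} \;\longrightarrow\; R/J(G)^{s} R \,\oplus\, R/J(H)^{s} R \;\longrightarrow\; R/\bigl(J(G)^{s} R + J(H)^{s} R\bigr) \;\longrightarrow\; 0
\]
yields $\mathrm{Ass}_{R}(R/J(L)^{s}) \subseteq \mathrm{Ass}_{R}(R/J(G)^{s} R) \cup \mathrm{Ass}_{R}(R/J(H)^{s} R)$. Because $R$ is a polynomial extension of $R_{1}$ and of $R_{2}$, standard flat base change (or a direct monomial primary decomposition argument) identifies $\mathrm{Ass}_{R}(R/J(G)^{s} R)$ with $\mathrm{Ass}_{R_{1}}(R_{1}/J(G)^{s})$, and similarly for $H$. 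This supplies the containment $\subseteq$ in the theorem.

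For the reverse containment, I would appeal to Lemma \ref{FHV2}. A prime $P \in \mathrm{Ass}_{R_{1}}(R_{1}/J(G)^{s})$ satisfies $P \subseteq V(G)$, and any edge of $H$ contained in $P$ would have both endpoints in $V(G) \cap V(H) = \{v\}$, which is impossible. Hence the induced subgraph $L_{P}$ coincides with $G_{P}$, so $J(L_{P}) = J(G_{P})$, and Lemma \ref{FHV2} applied first to $L$ and then to $G$ gives $P \in \mathrm{Ass}_{R}(R/J(L)^{s})$. The argument for primes associated to $J(H)^{s}$ is symmetric.

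The principal difficulty is the matching step in the proof of the identity $J(L)^{s} = J(G)^{s} R \cap J(H)^{s} R$. This step relies crucially on $V(G) \cap V(H)$ being a single vertex, so that reconciling the two factorizations reduces to a one-dimensional bookkeeping at $x_{v}$; with a larger intersection the analogous identity, and the theorem itself, can fail.
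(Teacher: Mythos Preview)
The paper does not supply its own proof of this statement; it is quoted from \cite[Theorem~11]{KNT} and invoked as a black box in the proof of Theorem~\ref{Main. 1}. There is therefore no in-paper argument to compare your proposal against.

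Your argument is correct and self-contained. The identity $J(L)^{s} = J(G)^{s}R \cap J(H)^{s}R$ is the heart of the matter, and your matching step at the single shared vertex $v$ is exactly what makes it work: after permuting so that $\{j : v \in W_{1}^{(j)}\} \subseteq \{j : v \in W_{2}^{(j)}\}$ (or vice versa), the $x_{v}$-exponent of $\prod_{j} x^{W_{1}^{(j)} \cup W_{2}^{(j)}}$ is $\max(a,b)$ rather than $a+b$, which is precisely $\mathrm{lcm}(x^{A},x^{B})$. One minor remark: you claim each $W_{1}^{(j)} \cup W_{2}^{(j)}$ is a \emph{minimal} vertex cover of $L$. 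This is true and follows from $|V(G)\cap V(H)|=1$ by a short case check, but you do not actually need minimality here---any vertex cover $W$ of $L$ gives $x^{W} \in J(L)$, so $m \in J(L)^{s}$ regardless. The Mayer--Vietoris sequence plus flat base change for the inclusion $\subseteq$, and Lemma~\ref{FHV2} for the inclusion $\supseteq$, are both clean; the observation that $L_{P}=G_{P}$ because no edge of $H$ can have both endpoints in $V(G)$ is exactly right.
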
 


The subsequent  theorem  examines  the relation between  associated primes of powers of the cover ideal of the union of two   finite simple connected graphs   with the  associated primes of powers of the cover ideals of each of them, under the condition  that they have only one edge  in common.

\begin{theorem} \cite[Theorem 12]{KNT} \label{IntersectionTwo}
Let $G=(V(G), E(G))$ and $H=(V(H), E(H))$ be two finite simple connected graphs such that $|V(G)\cap V(H)|=2$ and 
$|E(G)\cap E(H)|=1$.  Let $L=(V(L), E(L))$  be the  finite simple graph such that   $V(L):=V(G) \cup V(H)$ and  $E(L):=E(G) \cup E(H)$. 
 Then 
$$\mathrm{Ass}_{R}(R/J(L)^s)=\mathrm{Ass}_{R_1}(R_1/J(G)^s)\cup \mathrm{Ass}_{R_2}(R_2/J(H)^s),$$ for all  $s$, where $R_1=K[ x_\alpha : \alpha\in V(G)]$, 
$R_2=K[ x_\alpha : \alpha\in V(H)]$,
 and  $R=K[ x_\alpha : \alpha\in V(L)]$. 
\end{theorem}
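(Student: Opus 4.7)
The plan is to establish both inclusions using the localization principle of Lemma~\ref{FHV2}, which says a monomial prime $P$ lies in $\mathrm{Ass}_R(R/J(\mathcal{H})^s)$ precisely when $P$ is an associated prime of the cover ideal of the induced subhypergraph $\mathcal{H}_P$ in the polynomial ring $k[P]$. Write $\{u,v\}$ for the unique common edge, so that $V(G) \cap V(H) = \{u,v\}$.

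For the inclusion $\supseteq$, I would start with $P \in \mathrm{Ass}_{R_1}(R_1/J(G)^s)$, a monomial prime generated by variables indexed by a subset of $V(G)$. The crucial structural observation is that $L_P = G_P$: any edge of $L$ with both endpoints in $P \subseteq V(G)$ must come from $E(G) \cup E(H)$, and if it lay in $E(H) \setminus E(G)$ then both endpoints would belong to $V(G) \cap V(H) = \{u,v\}$, forcing it to equal the shared edge $\{u,v\} \in E(G)$, a contradiction. Applying Lemma~\ref{FHV2} first to $G$ and then in reverse to $L$ transfers $P$ into $\mathrm{Ass}_R(R/J(L)^s)$; the symmetric argument handles primes from $\mathrm{Ass}_{R_2}(R_2/J(H)^s)$.

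For the converse $\subseteq$, take $P \in \mathrm{Ass}_R(R/J(L)^s)$, so $P \in \mathrm{Ass}_{k[P]}(k[P]/J(L_P)^s)$ by Lemma~\ref{FHV2}. If $P \subseteq V(G)$ or $P \subseteq V(H)$, the previous argument runs in reverse to place $P$ in the appropriate set on the right. Otherwise, set $P_G = P \cap V(G)$ and $P_H = P \cap V(H)$, both nonempty, with $P_G \cap P_H = P \cap \{u,v\}$, and examine three subcases according to $|P \cap \{u,v\}| \in \{0,1,2\}$. When $P \cap \{u,v\} = \emptyset$, the graph $L_P$ is a vertex-disjoint union of $G_{P_G}$ and $H_{P_H}$, so $J(L_P)^s$ factors as a product of ideals in disjoint variables; its associated primes are supported inside $P_G$ or inside $P_H$ alone and cannot equal the mixed prime $P$, ruling out this configuration. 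When $|P \cap \{u,v\}| = 1$, the graph $L_P$ is a one-vertex gluing, so Theorem~\ref{IntersectionOne} applied to $L_P$ forces the associated primes to live in $k[P_G]$ or $k[P_H]$, again contradicting the fact that $P$ contains vertices both in $P_G \setminus P_H$ and in $P_H \setminus P_G$.

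The main obstacle is the final subcase $P \supseteq \{u,v\}$, in which $L_P$ is itself a gluing along $\{u,v\}$ of two smaller graphs. I would proceed by induction on $|V(L)|$: whenever $P \subsetneq V(L)$, the inductive hypothesis applied to $L_P$ (which carries the same glued structure on strictly fewer vertices) once more forces $P$ into $\mathrm{Ass}_{k[P_G]}(k[P_G]/J(G_{P_G})^s)$ or $\mathrm{Ass}_{k[P_H]}(k[P_H]/J(H_{P_H})^s)$, yielding the same contradiction with the mixed nature of $P$. The genuinely delicate point is the boundary case $P = V(L)$, where induction offers no reduction and one must argue directly that the full maximal ideal of $R$ is not an associated prime of $J(L)^s$, for instance by exhibiting a nonzerodivisor modulo $J(L)^s$ through a careful analysis of the colon ideal $(J(L)^s :_R \mathfrak{m})$ in terms of vertex covers of $L$ constrained by the shared edge $\{u,v\}$. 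A secondary technical point is that induced subgraphs $G_{P_G}$ or $H_{P_H}$ may be disconnected, so either a mild generalization of the theorem beyond the connected hypothesis or a preliminary decomposition across connected components must be folded into the induction.
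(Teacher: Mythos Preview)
The paper does not contain a proof of this statement: it is quoted verbatim from \cite[Theorem 12]{KNT} as an auxiliary tool for Theorem~\ref{Main. 1}, with no argument given here. So there is no in-paper proof to compare your proposal against.

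On the merits of your proposal: the localization framework via Lemma~\ref{FHV2} and the case split on $|P\cap\{u,v\}|$ are reasonable, and the inclusion $\supseteq$ as well as the subcases $|P\cap\{u,v\}|\in\{0,1\}$ are handled correctly. However, there is a genuine gap. Your induction funnels the entire content of the theorem into the single base case $P=(x_\alpha:\alpha\in V(L))$, and for that case you offer only the phrase ``exhibit a nonzerodivisor modulo $J(L)^s$ through a careful analysis of the colon ideal $(J(L)^s:_R\mathfrak m)$ in terms of vertex covers of $L$ constrained by the shared edge $\{u,v\}$.'' That is not a proof; it is a restatement of what must be shown. The heart of the theorem is precisely that gluing two graphs along a common edge cannot produce the full maximal ideal as a new embedded associated prime of any power of the cover ideal, and nothing in your outline addresses why this should hold.

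For orientation, the argument in \cite{KNT} (and the closely related proof of Theorem~\ref{NTF1} in the present paper) does not go through colon ideals at all. It uses the correspondence from \cite[Corollary~4.5]{FHV2} between primes in $\mathrm{Ass}(R/J(\mathcal H)^s)$ and critically $(s{+}1)$-chromatic subhypergraphs of the $s$-th expansion $\mathcal H^s$. One then argues combinatorially that a critical subhypergraph whose depolarization meets both $V(G)\setminus V(H)$ and $V(H)\setminus V(G)$ cannot exist, because removing a vertex on one side already drops the chromatic number via a coloring that ignores the other side beyond the shared edge. If you want to repair your approach, that coloring machinery is the missing ingredient for the base case; a direct colon-ideal computation is unlikely to succeed without it.
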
 


To understand the proof of Theorem \ref{Main. 1}, we first review some notation  from \cite{JS} as follows:

Let $G=(V(G), E(G))$ be a finite simple connected graph. For any $x,y \in V(G)$, an $(x,y)$-path is simply a path between the vertices $x$ and $y$ in $G$. Also, for a vertex  subset $W$ of a graph $G$, $\langle W \rangle$ will denote the subgraph of $G$ induced by $W$. 

Let $G$ be an {\it almost bipartite} graph, that is to say, $G$ has only one induced odd cycle subgraph, say $C_{2k+1}$.   For each $i\in V(C_{2k+1})$, let 
$$A_i=\{x\in V(G)|~x\neq i ~\text{and for all~} j\in V(C_{2k+1}), ~i \text{~appears on every~}  (x,j)\text{-path}\}.$$
Based on \cite[Page 540]{JS}, it should be noted that the set $A_i$ may be empty for some $i$, and it follows from \cite[Lemma 2.3]{JS}  that if $A_i\neq \emptyset$, then the induced subgraph $\langle A_i \rangle$ is bipartite in its own right. Also, for every edge $e=\{i,j\} \in E(C_{2k+1})$, let 
\begin{align*}
B_e=  \{& x\in V(G)\setminus V(C_{2k+1})|~\text{for every}~ 
m\in V(C_{2k+1})\setminus \{i,j\}, ~\text{there } \\
& \text{is an}~ (x,m)\text{-path in which} ~ i \text{~appears but}~ j~\text{does not, and an } \\
&   (x,m)\text{-path in which} ~ j \text{~appears but}~ i~ \text{does not}\}.
\end{align*} 
Once again, according to \cite[Page 541]{JS}, one must note that the set $B_e$ may be empty for some $e$, and it can be deduced  from \cite[Lemma 2.3]{JS}  that if $B_e\neq \emptyset$, then the induced subgraph $\langle B_e \rangle$ is bipartite in its own right. Furthermore, the $A_i$'s and
$B_e$'s are all mutually disjoint. In other words, each vertex of $G$ is in exactly one of the following sets: (i) $V(C_{2k+1})$, 
(ii) $A_i$ for some $i$, and (iii) $B_e$ for some $e$. Moreover, for each $i,j\in V(C_{2k+1})$ with $i\neq j$, and $e, e'\in E(C_{2k+1})$ with $e\neq e'$, one can easily derive from the definitions  that there is no path between any two vertices of $\langle A_i \rangle$ and $\langle B_e \rangle$, or $\langle A_i \rangle$ and $\langle A_j \rangle$, or  $\langle B_e \rangle$ and $\langle B_{e'} \rangle$.  

As an example, consider the following graph $G$ from \cite{JS}.

  \begin{center}
 
\scalebox{1} 
{
\begin{pspicture}(0,-2.5629687)(6.0628123,2.5629687)
\psdots[dotsize=0.12](2.7809374,0.42453125)
\psdots[dotsize=0.12](3.7809374,-0.37546876)
\psdots[dotsize=0.12](1.7809376,-0.37546876)
\psdots[dotsize=0.12](3.1809375,-1.3954687)
\psdots[dotsize=0.12](2.3809376,-1.4154687)
\psline[linewidth=0.04cm](2.7609375,0.42453125)(3.7209375,-0.35546875)
\psline[linewidth=0.04cm](2.7609375,0.42453125)(1.7809376,-0.35546875)
\psline[linewidth=0.04cm](3.7409375,-0.37546876)(3.1809375,-1.3354688)
\psline[linewidth=0.04cm](1.7809376,-0.39546874)(2.3609376,-1.3754687)
\psline[linewidth=0.04cm](2.3809376,-1.3954687)(3.1409376,-1.3954687)
\psdots[dotsize=0.12](3.9809375,-1.3954687)
\psline[linewidth=0.04cm](3.1809375,-1.4154687)(3.2009375,-1.3954687)
\psline[linewidth=0.04cm](3.1809375,-1.3954687)(3.9809375,-1.3954687)
\psline[linewidth=0.04cm](3.9609375,-1.4154687)(3.9609375,-1.3954687)
\psdots[dotsize=0.12](4.7809377,-1.3754687)
\psline[linewidth=0.04cm](3.9809375,-1.3954687)(4.7409377,-1.3954687)
\psdots[dotsize=0.12](4.5809374,0.00453125)
\psdots[dotsize=0.12](5.4009376,0.40453124)
\psdots[dotsize=0.12](3.5609374,0.8045313)
\psdots[dotsize=0.12](4.4009376,1.2045312)
\psdots[dotsize=0.12](3.2009375,1.2245313)
\psdots[dotsize=0.12](2.7409375,2.0445313)
\psdots[dotsize=0.12](2.3809376,1.2245313)
\psdots[dotsize=0.12](1.4009376,-0.97546875)
\psdots[dotsize=0.12](0.6009375,-0.15546875)
\psline[linewidth=0.04cm](0.5809375,-0.15546875)(1.7409375,-0.35546875)
\psline[linewidth=0.04cm](0.6009375,-0.17546874)(1.3609375,-0.9554688)
\psline[linewidth=0.04cm](1.3809375,-0.9554688)(2.3609376,-1.4154687)
\psline[linewidth=0.04cm](2.7409375,2.0645313)(2.3809376,1.2445313)
\psline[linewidth=0.04cm](2.7409375,2.0245314)(3.1609375,1.2645313)
\psline[linewidth=0.04cm](2.3809376,1.2245313)(2.7609375,0.44453126)
\psline[linewidth=0.04cm](3.2009375,1.2245313)(2.7809374,0.44453126)
\psline[linewidth=0.04cm](2.7609375,0.44453126)(3.5409374,0.8045313)
\psline[linewidth=0.04cm](3.5609374,0.82453126)(4.4009376,1.2045312)
\psline[linewidth=0.04cm](4.3809376,1.2245313)(5.3809376,0.40453124)
\psline[linewidth=0.04cm](3.7809374,-0.35546875)(4.5609374,0.00453125)
\psline[linewidth=0.04cm](4.5609374,0.04453125)(4.5809374,0.08453125)
\psline[linewidth=0.04cm](4.5809374,0.02453125)(5.4009376,0.40453124)
\psline[linewidth=0.04cm](3.5609374,0.8045313)(4.5609374,0.04453125)
\usefont{T1}{ptm}{m}{n}
\rput(2.4723437,0.47453126){$1$}
\usefont{T1}{ptm}{m}{n}
\rput(1.7723438,-0.08546875){$2$}
\usefont{T1}{ptm}{m}{n}
\rput(2.3523438,-1.7054688){$3$}
\usefont{T1}{ptm}{m}{n}
\rput(3.1923437,-1.7254688){$4$}
\usefont{T1}{ptm}{m}{n}
\rput(3.9123437,-0.62546873){$5$}
\usefont{T1}{ptm}{m}{n}
\rput(3.4723437,1.4145312){$6$}
\usefont{T1}{ptm}{m}{n}
\rput(2.7323437,2.3745313){$7$}
\usefont{T1}{ptm}{m}{n}
\rput(2.0523438,1.3945312){$8$}
\usefont{T1}{ptm}{m}{n}
\rput(3.9523437,-1.7254688){$9$}
\usefont{T1}{ptm}{m}{n}
\rput(4.802344,-1.7054688){$10$}
\usefont{T1}{ptm}{m}{n}
\rput(3.4823437,1.0345312){$11$}
\usefont{T1}{ptm}{m}{n}
\rput(4.662344,-0.24546875){$12$}
\usefont{T1}{ptm}{m}{n}
\rput(4.382344,1.4745313){$13$}
\usefont{T1}{ptm}{m}{n}
\rput(5.662344,0.43453124){$14$}
\usefont{T1}{ptm}{m}{n}
\rput(0.32234374,-0.08546875){$15$}
\usefont{T1}{ptm}{m}{n}
\rput(1.4623437,-0.72546875){$16$}
\usefont{T1}{ptm}{m}{n}
\rput(2.7523437,-2.3854687){$G$}
\psdots[dotsize=0.12](0.9809375,-1.3954687)
\psline[linewidth=0.04cm](0.5809375,-0.17546874)(0.9609375,-1.3554688)
\psline[linewidth=0.04cm](0.9809375,-1.3954687)(2.3609376,-1.4154687)
\usefont{T1}{ptm}{m}{n}
\rput(0.96234375,-1.7054688){$17$}
\end{pspicture} 
}

 \end{center}
  
Direct computations show that  $A_1=\{6, 7, 8\}, ~A_4=\{9, 10\}, ~A_2=A_3=A_5=\emptyset,$
 and 
 $B_{\{1,5\}}=\{11, 12, 13, 14\},  ~
 B_{\{2,3\}}=\{15, 16, 17\},~ B_{\{1,2\}}=B_{\{3,4\}}=B_{\{4,5\}}=\emptyset.$
  
The following theorem is the first main result of this section.  

\begin{theorem} \label{Main. 1}
Assume that $G=(V(G), E(G))$ is a finite simple connected graph, and $J(G)$ denotes the cover ideal of $G$. Then $J(G)$ is nearly normally torsion-free if and only if $G$ is either a bipartite graph or an almost bipartite graph.
\end{theorem}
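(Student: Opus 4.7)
My plan is to prove the equivalence by combining the structural decomposition of almost bipartite graphs from \cite{JS} with the glueing formulas in Theorems \ref{IntersectionOne} and \ref{IntersectionTwo}, together with Lemma \ref{FHV2} and Proposition \ref{Pro. 1}.

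For the sufficiency, if $G$ is bipartite then $J(G)$ is already normally torsion-free by a classical result (a consequence of K\"onig's theorem), hence in particular nearly normally torsion-free. If instead $G$ is almost bipartite with unique induced odd cycle $C_{2k+1}$, I would set $G_i := \langle A_i \cup \{i\}\rangle$ for each cycle vertex $i$ with $A_i \neq \emptyset$ and $G_e := \langle B_e \cup e\rangle$ for each cycle edge $e$ with $B_e \neq \emptyset$, and first verify that $V(G) = V(C_{2k+1}) \sqcup \bigsqcup_i A_i \sqcup \bigsqcup_e B_e$ and that every edge of $G$ lies either in $C_{2k+1}$, in some $G_i$, or in some $G_e$. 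Both facts follow from the definitions and from \cite[Lemma 2.3]{JS}; in particular the absence of edges between distinct pieces is a direct consequence of the defining path conditions for $A_i$ and $B_e$. Since $G$ has only one induced odd cycle, each $G_i$ and each $G_e$ is bipartite (neither contains $C_{2k+1}$), so $J(G_i)$ and $J(G_e)$ are themselves normally torsion-free.

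Next I would assemble $G$ from $C_{2k+1}$ by glueing the pieces one at a time: each $G_i$ meets the partial union in exactly the single vertex $i$, while each $G_e$ meets it in exactly the single edge $e=\{i,j\}$. The disjointness of the pieces ensures that this intersection pattern is preserved throughout, so Theorems \ref{IntersectionOne} and \ref{IntersectionTwo} apply iteratively and yield, for every $s \geq 1$,
\begin{equation*}
\mathrm{Ass}_R(R/J(G)^s) = \mathrm{Ass}(R'/J(C_{2k+1})^s) \cup \bigcup_i \mathrm{Ass}(R_i/J(G_i)^s) \cup \bigcup_e \mathrm{Ass}(R_e/J(G_e)^s),
\end{equation*}
where $R', R_i, R_e$ are the polynomial subrings on the respective vertex sets. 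Each bipartite piece contributes only minimal primes of $J(G)$ (generators-of-edge primes lying in $\mathrm{Min}(J(G))$), while Proposition \ref{Pro. 1} contributes at most the single extra prime $\mathfrak{p} := (x_i : i \in V(C_{2k+1}))$ for $s \geq 2$; at $s=1$ the right-hand side collapses to $\mathrm{Min}(J(G))$ by squarefreeness. Therefore $\mathrm{Ass}_R(R/J(G)^s) \subseteq \mathrm{Min}(J(G)) \cup \{\mathfrak{p}\}$ for every $s$, proving that $J(G)$ is nearly normally torsion-free.

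For the necessity I would argue contrapositively. If $G$ is neither bipartite nor almost bipartite, then $G$ contains two distinct induced odd cycles $C$ and $C'$, and, since induced subgraphs are determined by their vertex sets, $V(C) \neq V(C')$. Set $\mathfrak{p}_C := (x_i : i \in V(C))$ and $\mathfrak{p}_{C'} := (x_j : j \in V(C'))$; both have height at least $3$, so neither is a minimal prime of $J(G)$ (whose minimal primes have height $2$). By Proposition \ref{Pro. 1} applied to each cycle, $\mathfrak{p}_C \in \mathrm{Ass}(k[\mathfrak{p}_C]/J(C)^s)$ and $\mathfrak{p}_{C'} \in \mathrm{Ass}(k[\mathfrak{p}_{C'}]/J(C')^s)$ for every $s \geq 2$, and Lemma \ref{FHV2} lifts both to $\mathrm{Ass}_R(R/J(G)^s)$. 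Hence $J(G)^s$ has two distinct embedded primes for every $s \geq 2$, contradicting the near-normal-torsion-free condition, which allows at most one embedded prime. The main obstacle I anticipate is the combinatorial bookkeeping in the glueing step: one must verify at every stage of the iterative construction that the added piece meets the partial union in exactly one vertex or exactly one edge, so that Theorems \ref{IntersectionOne} and \ref{IntersectionTwo} remain applicable. This is precisely where the disjointness of the pieces $A_i, B_e$ and the absence of edges between them play an essential role.
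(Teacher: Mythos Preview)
Your proposal is correct and follows essentially the same approach as the paper: both directions use the same ingredients (Lemma~\ref{FHV2} and Proposition~\ref{Pro. 1} for necessity; the $A_i$/$B_e$ decomposition from \cite{JS} together with iterated applications of Theorems~\ref{IntersectionOne} and~\ref{IntersectionTwo} and Proposition~\ref{Pro. 1} for sufficiency). Your write-up is in fact slightly more careful than the paper's in two places: you explicitly note that $\mathfrak{p}_C \neq \mathfrak{p}_{C'}$ and that both have height $\geq 3$ (hence are embedded), and you flag the bookkeeping needed to ensure the single-vertex/single-edge intersection hypothesis is maintained at each step of the iterated glueing.
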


\begin{proof}
To show the forward implication, let $J(G)$ be nearly normally torsion-free. Suppose, on the contrary, that $G$ is neither bipartite nor almost bipartite. This gives that $G$ has at least two induced odd cycle subgraphs, say $C$ and $C'$. It follows from Proposition  \ref{Pro. 1} that $\mathfrak{p}=(x_j~:~ j \in  V(C))\in \mathrm{Ass}(J(C)^s)$ and  $\mathfrak{p}'=(x_j~:~ j \in  V(C'))\in \mathrm{Ass}(J(C')^s)$ for all $s\geq 2$. Since $G_{\mathfrak{p}}=C$ and $G_{\mathfrak{p}'}=C'$, 
we can deduce from Lemma \ref{FHV2} that $\mathfrak{p}, \mathfrak{p}' \in \mathrm{Ass}_R(R/J(G)^s)$ for all $s\geq 2$. This contradicts the assumption that $J(G)$ is nearly normally torsion-free. 

Conversely, if $G$ is bipartite, then on account of \cite[Corollary 2.6]{GRV}, one has $J(G)$ is normally torsion-free, and so $J(G)$ is nearly normally torsion-free. Next, we assume that $G$ is an almost bipartite graph, and let $C$ be its unique induced odd cycle subgraph. Put  $\mathfrak{p}=(x_j~:~ j\in V(C))$. We claim that $\mathrm{Ass}(J(G)^s)=\mathrm{Min}(J(G)) \cup \{\mathfrak{p}\}$ for all $s\geq 2$. Fix $s\geq 2$. For any $i\in V(C)$ and $e\in E(C)$, assume that $A_i$ and $B_e$ are the vertex subsets of $G$ as defined in the discussion above. Without loss of generality, suppose that $A_i \neq \emptyset$ for all $i=1, \ldots, r$ and $B_{e_j}\neq \emptyset$ for all $j=1, \ldots, t$. Set $H_i:=\langle A_i \cup \{i\}\rangle$ for all  
  $i=1, \ldots, r$, and $L_j:=\langle B_{e_j} \cup \{\alpha_j, \beta_j\}\rangle$ for all $j=1, \ldots, t$, where  $e_j=\{\alpha_j, \beta_j\}$. Accordingly, we get $|V(C) \cap V(H_i)|=1$ for all     $i=1, \ldots, r$, $|V(C) \cap V(L_j)|=2$ 
and $|E(C) \cap E(L_j)|=1$ for all $j=1, \ldots, t$. On the other hand, it should be noted that all $H_i$ and $L_j$ are bipartite, 
and so \cite[Corollary 2.6]{GRV} yields that  $\mathrm{Ass}(J(H_i)^s)=\mathrm{Min}(J(H_i))$ and   
$\mathrm{Ass}(J(L_j)^s)=\mathrm{Min}(J(L_j))$ for all 
$i=1, \ldots, r$ and $j=1, \ldots, t$. Now, repeated applications of Theorems \ref{IntersectionOne} and \ref{IntersectionTwo} give  that $\mathrm{Ass}(J(G)^s)=\mathrm{Ass}(J(C)^s) \cup \mathrm{Min}(J(H_i)) \cup \mathrm{Min}(J(L_j))$ for all 
$i=1, \ldots, r$ and $j=1, \ldots, t$. By virtue of Proposition \ref{Pro. 1}, one obtains $\mathrm{Ass}(J(C)^s)=\{\mathfrak{p}\} \cup \mathrm{Min}(J(C))$. 
We thus have $\mathrm{Ass}(J(G)^s)= \mathrm{Min}(J(G)) \cup \{\mathfrak{p}\}$, as claimed. This shows that $J(G)$ is nearly normally torsion-free, and the proof is done. 
\end{proof}


Now, we focus on cover ideals of hypergraphs. For this purpose, we state the following theorem. To do this,  we recall  some definitions   which  will be 
necessary for  understanding Theorem \ref{NTF1}.  We list them as follows:

\begin{definition} (see \cite[Definition 2.7]{FHV2})
 Let  $\mathcal{H}= (V_{\mathcal{H}} , E_{\mathcal{H}})$ be a hypergraph. A {\em $d$-coloring} of $\mathcal{H}$ is any partition of $V_{\mathcal{H}} = C_1\cup \cdots \cup C_d$
into $d$ disjoint sets such that for every $E \in E_{\mathcal{H}}$, we have $E\nsubseteq C_i$  for all $i = 1, \ldots ,d$. (In the case of a
graph $G$, this simply means that any two vertices connected by an edge receive different colors.) The
$C_i$'s are called the color classes of $\mathcal{H}$. Each color class $C_i$ is an {\em independent set}, meaning that $C_i$ does not
contain any edge of the hypergraph. The chromatic number of $\mathcal{H}$, denoted by $\chi(\mathcal{H})$, is the minimal $d$
such that $\mathcal{H}$  has a $d$-coloring.
\end{definition}
\begin{definition} (see \cite[Definition 2.8]{FHV2})
 A hypergraph $\mathcal{H}$ is called {\em critically $d$-chromatic} if 
$\chi(\mathcal{H})= d$, but for every vertex
$x\in V_{\mathcal{H}}$, $\chi(\mathcal{H}\setminus \{x\})< d$,   where 
$\mathcal{H}\setminus \{x\}$ denotes the hypergraph $\mathcal{H}$ with $x$ and all edges containing $x$ removed.
\end{definition}

\begin{definition} (see \cite[Definition 4.2]{FHV2})
 Let  $\mathcal{H}= (V_{\mathcal{H}} , E_{\mathcal{H}})$ be a hypergraph with $V_{\mathcal{H}}=\{x_1, \ldots, x_n\}$. For each $s$, the {\em $s$-th expansion} of $\mathcal{H}$ is defined to be the hypergraph obtained by replacing each vertex $x_i \in V_{\mathcal{H}}$ by a collection $\{x_{ij}~|~ j=1, \ldots, s\}$, and replacing $E_{\mathcal{H}}$ by the edge  set that  consists of edges 
 $\{x_{i_1l_1}, \ldots, x_{i_rl_r}\}$ whenever 
 $\{x_{i_1}, \ldots, x_{i_r}\}\in E_{\mathcal{H}}$ and edges 
 $\{x_{il}, x_{ik}\}$ for $l\neq k$. We denote this hypergraph by $\mathcal{H}^s$. The new variables $x_{ij}$ are called the shadows of $x_i$. The process of setting $x_{il}$ to equal to $x_i$ for all $i$ and $l$ is called the {\em depolarization}. 
 \end{definition}

\begin{theorem} \label{NTF1}
Assume that  $\mathcal{G}=(V(\mathcal{G}), E(\mathcal{G}))$ and  $\mathcal{H}=(V(\mathcal{H}), E(\mathcal{H}))$ are 
 two finite simple hypergraphs such that $V(\mathcal{H})=V(\mathcal{G})\cup \{w\}$ with $w\notin V(\mathcal{G})$,  and $E(\mathcal{H})=E(\mathcal{G}) \cup \{\{v,w\}\}$ for some vertex $v\in V(\mathcal{G})$. Then 
$$\mathrm{Ass}_{R'}(R'/J(\mathcal{H})^s)=\mathrm{Ass}_{R}(R/J(\mathcal{G})^s)\cup
\{(x_v, x_w)\},$$
 for all  $s$, where $R=K[ x_\alpha : \alpha\in V(\mathcal{G})]$ and 
$R'=K[ x_\alpha : \alpha\in V(\mathcal{H})]$.
\end{theorem}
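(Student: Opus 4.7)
The plan is to reduce the statement to the single ideal identity
\[
J(\mathcal{H})^s \;=\; J(\mathcal{G})^s R' \,\cap\, (x_v, x_w)^s, \qquad s \geq 1,
\]
and to read off the associated-prime equality from this factorization. For $s=1$, Alexander duality gives the identity immediately: since $I(\mathcal{H}) = I(\mathcal{G})R' + (x_v x_w)$, one has $J(\mathcal{H}) = I(\mathcal{H})^\vee = I(\mathcal{G})^\vee R' \cap (x_v x_w)^\vee = J(\mathcal{G})R' \cap (x_v, x_w)$.

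For $s \geq 2$, the inclusion $\subseteq$ is immediate from $J(\mathcal{H}) \subseteq J(\mathcal{G})R'$ and $J(\mathcal{H}) \subseteq (x_v, x_w)$. For the reverse inclusion, take a monomial $m$ in the right-hand side and write $m = h \cdot X_{W_1} \cdots X_{W_s}$ where each $W_j$ is a minimal vertex cover of $\mathcal{G}$ and $h$ is a monomial in $R'$. The minimal vertex covers of $\mathcal{H}$ split into three types: (A1) minimal vertex covers $W$ of $\mathcal{G}$ with $v \in W$; (A2) $\{v\} \cup W'$ for $W'$ a minimal vertex cover of $\mathcal{G}$ not containing $v$; (B) $\{w\} \cup W'$ for the same $W'$'s. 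Put $k = |\{j : v \in W_j\}|$, $a = \deg_{x_v}(m)$, $b = \deg_{x_w}(m)$. The factorization forces $k \leq a$ and $\deg_{x_w}(h) = b$, and the membership $m \in (x_v, x_w)^s$ gives $a + b \geq s$. Pick any integer $c$ with $\max(0, s - a) \leq c \leq \min(s - k, b)$; this range is non-empty precisely because of the inequalities just noted. Among the $s - k$ indices $j$ with $v \notin W_j$, replace $X_{W_j}$ by $x_w X_{W_j}$ for $c$ of them (type (B)) and by $x_v X_{W_j}$ for the remaining $s - k - c$ (type (A2)); keep $X_{W_j}$ unchanged for the $k$ indices with $v \in W_j$ (type (A1)). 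The resulting product of $s$ generators of $J(\mathcal{H})$ equals $x_v^{s-k-c}x_w^c \prod_j X_{W_j}$, which still divides $m$, witnessing $m \in J(\mathcal{H})^s$.

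Once the ideal identity is in hand, the canonical injection
\[
R'/J(\mathcal{H})^s \;\hookrightarrow\; R'/J(\mathcal{G})^s R' \;\oplus\; R'/(x_v, x_w)^s
\]
yields $\mathrm{Ass}_{R'}(R'/J(\mathcal{H})^s) \subseteq \mathrm{Ass}_{R'}(R'/J(\mathcal{G})^s R') \cup \mathrm{Ass}_{R'}(R'/(x_v,x_w)^s) = \mathrm{Ass}_R(R/J(\mathcal{G})^s) \cup \{(x_v, x_w)\}$, using faithful flatness of $R \hookrightarrow R'$ to identify associated primes of the extended ideal with those of $J(\mathcal{G})^s$. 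For the reverse inclusion, I invoke Lemma~\ref{FHV2}: each $P \in \mathrm{Ass}_R(R/J(\mathcal{G})^s)$ has $w \notin P$, so the induced subhypergraph $\mathcal{H}_P$ coincides with $\mathcal{G}_P$, giving $P \in \mathrm{Ass}_{R'}(R'/J(\mathcal{H})^s)$. For $T = \{v, w\}$, simplicity of $\mathcal{H}$ forbids $\{v\}$ from being an edge of $\mathcal{G}$, so the only edge of $\mathcal{H}_T$ is $\{v,w\}$ itself; hence $J(\mathcal{H}_T) = (x_v, x_w)$ in $k[x_v,x_w]$, and since $(x_v,x_w) \in \mathrm{Ass}(k[x_v,x_w]/(x_v,x_w)^s)$, Lemma~\ref{FHV2} promotes this to $(x_v, x_w) \in \mathrm{Ass}_{R'}(R'/J(\mathcal{H})^s)$.

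The main obstacle is the reverse containment in the ideal identity: the combinatorial allocation of the ``extra'' $x_v$ and $x_w$ factors across the $W_j$'s must simultaneously respect divisibility by $m$ in both the $x_v$- and $x_w$-degrees, and the feasibility of such an allocation is exactly the condition $a + b \geq s$ encoded by $m \in (x_v, x_w)^s$.
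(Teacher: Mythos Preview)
Your proof is correct, and it takes a genuinely different route from the paper's.

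\medskip

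\textbf{What the paper does.} For the containment $\supseteq$ both you and the paper invoke Lemma~\ref{FHV2} in the same way. For the harder containment $\subseteq$, however, the paper appeals to \cite[Corollary 4.5]{FHV2}: a prime $\mathfrak p\in\mathrm{Ass}(R'/J(\mathcal H)^s)$ must correspond to a critically $(s{+}1)$-chromatic sub\-hyper\-graph inside the $s$-th expansion $\mathcal H^s$, and the paper then argues combinatorially that if $w$ lies in $\mathfrak p$ then the criticality forces $\mathfrak p=(x_v,x_w)$ (since $w$ and its shadows are only adjacent to $v$ and its shadows, a clique of size $s{+}1$ among these already suffices). This is a coloring-theoretic argument that leans on the full machinery of hypergraph expansions.

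\textbf{What you do.} You bypass the coloring machinery entirely by establishing the sharper ideal identity $J(\mathcal H)^s=J(\mathcal G)^sR'\cap(x_v,x_w)^s$ via a direct monomial allocation, and then you read off $\subseteq$ from the injection $R'/J(\mathcal H)^s\hookrightarrow R'/J(\mathcal G)^sR'\oplus R'/(x_v,x_w)^s$ together with flatness of $R\hookrightarrow R'$. This is more elementary, and the ideal identity itself is of independent interest (it immediately yields, for instance, $J(\mathcal H)^{(s)}=J(\mathcal G)^{(s)}R'\cap(x_v,x_w)^s$ as well).

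\medskip

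Two minor remarks. First, your monomials $x_vX_{W_j}$ of ``type (A2)'' need not be \emph{minimal} generators of $J(\mathcal H)$ (adding $v$ to a minimal cover $W_j$ of $\mathcal G$ with $v\notin W_j$ can make some other vertex of $W_j$ redundant); but you only need $x_vX_{W_j}\in J(\mathcal H)$, which holds since $W_j\cup\{v\}$ is a (possibly non-minimal) vertex cover of $\mathcal H$, so the argument goes through unchanged. Second, your verification that $(x_v,x_w)\in\mathrm{Ass}(R'/J(\mathcal H)^s)$ via Lemma~\ref{FHV2} is fine, though one may also simply note that $(x_v,x_w)\in\mathrm{Min}(J(\mathcal H))$ because $\{v,w\}$ is an edge of the simple hypergraph $\mathcal H$.
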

\begin{proof}
For convenience of notation, set $I:=J(\mathcal{G})$ and $J:=J(\mathcal{H})$. We first prove  that 
$\mathrm{Ass}_{R}(R/I^s)\cup
\{(x_v, x_w)\}\subseteq \mathrm{Ass}_{R'}(R'/J^s)$ for all  $s$. Fix $s\geq 1$, and assume that  $\mathfrak{p}=(x_{i_1}, \ldots, x_{i_r})$ is an  arbitrary element of  $\mathrm{Ass}_R(R/I^s)$. According to 
\cite[Lemma 2.11]{FHV2}, we get  
$\mathfrak{p}\in \mathrm{Ass}(K[\mathfrak{p}]/J(\mathcal{G}_\mathfrak{p})^s)$, where $K[\mathfrak{p}]=K[x_{i_1}, \ldots, x_{i_r}]$ and  $\mathcal{G}_\mathfrak{p}$ is the induced subhypergraph  of $\mathcal{G}$ on the vertex set $\{i_1, \ldots, i_r\}\subseteq V(\mathcal{G})$. Since   $\mathcal{G}_\mathfrak{p}= \mathcal{H}_\mathfrak{p}$, we have  
$\mathfrak{p}\in \mathrm{Ass}(K[\mathfrak{p}]/J(H_\mathfrak{p})^s)$. This yields  that  $\mathfrak{p}\in \mathrm{Ass}_{R'}(R'/J^s)$. On account  of  $(x_v, x_w)\in \mathrm{Ass}_{R'}(R'/J^s)$, one derives $\mathrm{Ass}_{R}(R/I^s)\cup \{(x_v, x_w)\}\subseteq \mathrm{Ass}_{R'}(R'/J^s)$.  
To complete the proof, it is enough for us  to show  the reverse inclusion. Assume that $\mathfrak{p}=(x_{i_1}, \ldots, x_{i_r})$ is
 an  arbitrary element of  $\mathrm{Ass}_{R'}(R'/J^s)$ with 
 $\{i_1, \ldots, i_r\}\subseteq V(\mathcal{H})$. If $\{i_1, \ldots, i_r\}\subseteq V(\mathcal{G})$, then \cite[Lemma 2.11]{FHV2} implies   that $\mathfrak{p}\in \mathrm{Ass}_R(R/I^s)$, and the proof is done. Thus, let $w\in \{i_1, \ldots, i_r\}$. 
 It follows from  \cite[Corollary 4.5]{FHV2} that  the associated  primes of $J(\mathcal{H})^s$ will correspond to critical chromatic subhypergraphs of size $s+1$ in the $s$-th expansion of $\mathcal{H}$. This means that one can take the induced subhypergraph on the vertex set $\{i_1, \ldots, i_r\}$, and 
 then form the $s$-th expansion on this induced subhypergraph, and within this new hypergraph find a critical $(s+1)$-chromatic hypergraph.   Notice that since this expansion cannot have any critical chromatic subgraphs, this implies that $\mathcal{H}_{\mathfrak{p}}$ must be connected. Hence, 
 $v\in \{i_1, \ldots, i_r\}$.  Without loss of generality, one may  assume  that $i_1=v$ and $i_2=w$. 
 Thanks to  $w$ is only connected to $v$ in the hypergraph $\mathcal{H}$, and because  this induced subhypergraph is critical, if we remove the vertex $w$, one  can color the resulting hypergraph with at least $s$ colors. This leads to  that $w$ has to be adjacent to at least $s$ vertices. But the only thing $w$ is adjacent to is the shadows of $w$ and the shadows of $v$, and so one has a clique among these vertices.  Accordingly, 
 $w$ and its neighbors will form a clique of size $s+1$. Since 
   a clique is a critical graph, it follows that we do not   need any element of $\{i_3, \ldots, i_r\}$ or their shadows when making the critical $(s+1)$-chromatic hypergraph. Consequently, we obtain  $\mathfrak{p}=(x_v, x_w)$, as required.    
\end{proof}
 Before stating the next result, it should be noted that one can always view a square-free monomial ideal as the cover ideal of a 
 simple hypergraph. In fact, assume that  $I$ is  a square-free monomial ideal, and $I^\vee$ denotes its Alexander dual.
 Also, let $\mathcal{H}$ denote the hypergraph corresponding to $I^\vee$. Then, we have $I=J(\mathcal{H})$, where 
 $J(\mathcal{H})$ denotes the cover ideal of the hypergraph $\mathcal{H}$. Consult  \cite{FHM} for further details and information.

 For instance, consider the following square-free monomial ideal in the polynomial ring $R=K[x_1,x_2,x_3,x_4,x_5]$ over a field $K$, 
 $$I=(x_1x_2x_3, x_2x_3x_4, x_3x_4x_5, x_4x_5x_1, x_5x_1x_2).$$
 Then the Alexander dual of $I$ is given by 
 \begin{align*}
 I^\vee = &(x_1, x_2, x_3) \cap (x_2, x_3, x_4) \cap (x_3, x_4, x_5) \cap (x_4, x_5, x_1) \cap (x_5, x_1, x_2)\\
 = &(x_3x_5, x_2x_5, x_2x_4, x_1x_4, x_1x_3).
 \end{align*}
 Now, define  the  hypergraph $\mathcal{H}=(\mathcal{X, \mathcal{E}})$ with $\mathcal{X}=\{x_1,x_2,x_3,x_4,x_5\}$ and
 $$\mathcal{E}=\{\{x_3,x_5\}, \{x_2,x_5\}, \{x_2, x_4\}, \{x_1, x_4\}, \{x_1, x_3\}\}.$$ 
Then the edge ideal and cover ideal of the  hypergraph $\mathcal{H}$ are  given by 
$$I(\mathcal{H})=(x_3x_5, x_2x_5, x_2x_4, x_1x_4, x_1x_3),$$ 
and 
$$J(\mathcal{H})= I(\mathcal{H})^\vee
=(x_3,x_5) \cap (x_2,x_5) \cap (x_2, x_4) \cap (x_1, x_4) \cap (x_1, x_3).$$
 
 It is easy to see that $J(\mathcal{H})=I$, as claimed.
 

 We are in a position to provide the second main result of this section in the following corollary.

 \begin{corollary} \label{Cor. NFT1}
 Let $I$ be a normally torsion-free square-free  monomial ideal in $%
R=K[x_{1},\ldots ,x_{n}]$ with $\mathcal{G}(I) \subset R$. Then the ideal $L:=IS\cap (x_{n},x_{n+1})\subset  S=R[x_{n+1}]$ satisfies  the following statements:
\begin{itemize}
\item[(i)] $L$  is normally torsion-free. 
\item[(ii)]  $L$ is nearly normally torsion-free. 
\item[(iii)]  $L$ is normal.
\item[(iv)]  $L$ has the strong persistence proeprty. 
\item[(v)]  $L$ has the persistence property. 
\item[(vi)]  $L$ has the symbolic strong persistence property. 

\end{itemize}
 \end{corollary}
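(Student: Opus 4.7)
My plan is to reinterpret $L$ as the cover ideal of a hypergraph obtained from the hypergraph behind $I$ by adjoining a single whisker; once that is done, Theorem~\ref{NTF1} essentially gives the whole statement. Since $I$ is square-free, the discussion preceding the corollary writes $I = J(\mathcal{G})$ for a simple hypergraph $\mathcal{G}$ on the vertex set $\{x_1,\ldots,x_n\}$, with $I^\vee = I(\mathcal{G})$. Let $\mathcal{H}$ be obtained from $\mathcal{G}$ by adding the new vertex $x_{n+1}$ and the new edge $\{x_n,x_{n+1}\}$. Using $I(\mathcal{H}) = I(\mathcal{G}) + (x_nx_{n+1})$ together with the Alexander-duality identity $(J_1+J_2)^\vee = J_1^\vee \cap J_2^\vee$, one computes
$$J(\mathcal{H}) \;=\; I(\mathcal{H})^\vee \;=\; J(\mathcal{G}) \,\cap\, (x_n,x_{n+1}) \;=\; I\cap (x_n,x_{n+1}) \;=\; L.$$

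Now Theorem~\ref{NTF1}, applied with $v=x_n$ and $w=x_{n+1}$, yields
$$\mathrm{Ass}_S(S/L^s) \;=\; \mathrm{Ass}_R(R/I^s)\,\cup\,\{(x_n,x_{n+1})\}$$
for every $s \geq 1$. Because $I$ is a normally torsion-free square-free monomial ideal, $\mathrm{Ass}_R(R/I^s) \subseteq \mathrm{Ass}_R(R/I) = \mathrm{Min}(I)$ for all $s$, while the case $s=1$ of the displayed formula places $(x_n,x_{n+1})$ in $\mathrm{Ass}_S(S/L)$. Hence $\mathrm{Ass}_S(S/L^s) \subseteq \mathrm{Ass}_S(S/L)$ for every $s \geq 1$, which is exactly (i); and (ii) is then immediate from the definition.

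The remaining four statements fall out mechanically. For (iii), since $L$ is square-free and normally torsion-free, \cite[Theorem~1.4.6]{HH1} gives $L^k = L^{(k)}$ for all $k$; each $L^{(k)}$ is an intersection of powers of monomial primes and is therefore integrally closed, so every power of $L$ is integrally closed, which is normality. For (iv), it is well known that a normal monomial ideal satisfies the strong persistence property (cf.\ the discussion in Definition~\ref{spersistence}), and then (v) is immediate from (iv), or directly from the fact that $\mathrm{Ass}_S(S/L^s)$ is constant in $s$. Finally, for (vi), combining (iv) with $L^{(k)}=L^k$ yields
$$(L^{(k+1)} : L^{(1)}) \;=\; (L^{k+1} : L) \;=\; L^k \;=\; L^{(k)}.$$

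The only non-routine part of this argument is the hypergraph reformulation in the first paragraph: one must recognise $L$ as $J(\mathcal{H})$ for the whiskered hypergraph so that Theorem~\ref{NTF1} applies verbatim. Once that identification is made, every remaining item is either a direct specialisation of a standard implication chain (normally torsion-free $\Rightarrow$ normal $\Rightarrow$ strong persistence $\Rightarrow$ persistence) or an immediate consequence of $L^{(k)}=L^k$, and no further delicate argument is needed.
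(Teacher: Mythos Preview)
Your proof is correct and follows essentially the same strategy as the paper: identify $L$ as the cover ideal of the whiskered hypergraph, invoke Theorem~\ref{NTF1} to control $\mathrm{Ass}_S(S/L^s)$, and then run the standard implication chain. The only noteworthy deviation is in (vi): the paper cites \cite[Theorem~11]{RT} (strong persistence $\Rightarrow$ symbolic strong persistence), whereas you give a self-contained one-line verification using $L^{(k)}=L^k$ together with (iv), which is a nice simplification. One small quibble: for (iv) you point to Definition~\ref{spersistence}, but that definition does not actually record the implication ``normal $\Rightarrow$ strong persistence''; the paper cites \cite[Theorem~6.2]{RNA} for this, and you should do the same.
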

 \begin{proof}
(i)  In the light of the argument which has been stated above, we can   assume that   $I=J(\mathcal{H})$ such that the hypergraph 
 $\mathcal{H}$ corresponds to $I^\vee$,  where 
  $I^\vee$ denotes the Alexander dual of $I$. Fix $t\geq 1$. 
  It follows now from Theorem \ref{NTF1} that 
  $$\mathrm{Ass}_{S}(S/L^t)=\mathrm{Ass}_{R}(R/J(\mathcal{H})^t)\cup \{(x_n, x_{n+1})\}.$$
Since $I$ is normally torsion-free, one can deduce that 
$\mathrm{Ass}_{R}(R/J(\mathcal{H})^t)=\mathrm{Min}(J(\mathcal{H}))$, and so $\mathrm{Ass}_{S}(S/L^t)=\mathrm{Min}(J(\mathcal{H}))\cup
\{(x_n, x_{n+1})\}.$ Therefore, $\mathrm{Ass}_{S}(S/L^t)=\mathrm{Min}(L)$. This means that 
$L$ is normally torsion-free, as desired. \par 
(ii) It is obvious from  the definition of nearly normally torsion-freeness and (i). \par 
(iii) By virtue of    \cite[Theorem 1.4.6]{HH1}, every normally torsion-free square-free monomial ideal is normal. Now, the  assertion can be deduced from (i).  \par 
(iv) Due to   \cite[Theorem 6.2]{RNA}, every normal monomial ideal has the strong persistence property, and hence  the claim  follows readily from (iii). \par 
(v)  It is shown in \cite{HQ} that the strong persistence property implies the persistence property. Therefore, one  can   derive  the assertion from (iv). \par 
(vi)  Thanks to    \cite[Theorem 11]{RT}, the strong persistence property implies the symbolic strong persistence property, and  thus the claim is  an immediate consequence of (iv).   
 \end{proof}
 

\section{The case of $t$-spread principal Borel ideals}\label{borel}

In this section, we focus on normally torsion-free and nearly normally torsion-free $t$-spread principal Borel ideals. Let $R=K[x_1, \ldots, x_n]$ be a polynomial ring over a field $K$. Let $t$ be a positive integer. A monomial $x_{i_1} x_{i_2} \cdots x_{i_d} \in R$ with $i_1 \leq i_2 \leq \cdots  \leq i_d$ is called {\it $t$-spread} if $i_j -i_{j-1} \geq t$ for all $j=2, \ldots, d$.  A monomial ideal in $R$ is called a {\it $t$-spread monomial ideal} if it is generated by $t$-spread monomials. A 0-spread monomial ideal is just an ordinary monomial ideal, while a 1-spread monomial ideal is just a square-free monomial ideal. In the following text, we will assume that $t \geq 1$.

Let $I\subset R$ be a $t$-spread monomial ideal. Then $I$ is called a {\it $t$-spread strongly stable ideal} if for all $t$-spread monomials $u\in \mathcal{G}(I)$, all $j\in \mathrm{supp}(u)$
 and all $1\leq i <j$ such that $x_i(u/x_j)$  is a $t$-spread monomial, it follows that $x_i(u/x_j)\in I$. 
 
 \begin{definition}\label{boreldef}
 A monomial ideal $I\subset R$ is called a {\it $t$-spread principal Borel} if there exists a $t$-spread monomial $u\in \mathcal{G}(I)$ such that  $I$  is the smallest $t$-spread strongly stable ideal which contains $u$. We denote it as $I=B_t(u)$. It should be noted that for a $t$-spread monomial $u=x_{i_1} x_{i_2} \cdots x_{i_d} \in R$, we have $x_{j_1} x_{j_2} \cdots x_{j_d}\in \mathcal{G}(B_t(u))$ if and only if 
 $j_1\leq i_1, \ldots, j_d \leq i_d$ and $j_k - j_{k-1} \geq t$ for $k\in \{2, \ldots, d\}$. We refer the reader to \cite{EHQ} for more information. 
 \end{definition}
 
To see an application of Corollary \ref{Cor. 1}, we re-prove Proposition 4.4 from \cite{Claudia}. 

\begin{proposition} \label{App. 2}
Let $u=x_ix_n$ be a $t$-spread monomial in $R=K[x_1, \ldots, x_n]$ with $i\geq t$. Then $I=B_t(u)$ is  nearly normally  torsion-free.
\end{proposition}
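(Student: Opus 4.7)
The plan is to apply Corollary~\ref{Cor. 1}. Since the generating monomial $u=x_ix_n$ has degree two and is $t$-spread, every minimal generator of $I=B_t(u)$ has the form $x_jx_k$ with $1\le j\le i$ and $j+t\le k\le n$. In particular, $I$ is squarefree and may be identified with the edge ideal $I(G)$ of the graph $G$ on vertex set $[n]$ whose edges are exactly those pairs $\{j,k\}$ (with $j<k$) satisfying $j\le i$ and $k-j\ge t$. It then suffices to verify that $I(\mathfrak{m}\setminus\{x_\ell\})$ is normally torsion-free for every $\ell\in[n]$.

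Fix $\ell\in[n]$. Setting $x_\ell=1$ turns each edge generator incident to $\ell$ into a single variable and leaves the other edge generators unchanged, so after discarding redundant generators one obtains
\[
I(\mathfrak{m}\setminus\{x_\ell\})=P_\ell+I(G_\ell),
\]
where $P_\ell=(x_m : m\in N_G(\ell))$ and $G_\ell$ is the induced subgraph of $G$ on $[n]\setminus N_G[\ell]$. The ideals $P_\ell$ and $I(G_\ell)$ then involve disjoint sets of variables.

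The heart of the argument is to show that $G_\ell$ is bipartite. Consider first the case $\ell\le i$. Then $N_G(\ell)$ contains every $j\le \ell-t$ and every $k\ge \ell+t$, so the vertex set of $G_\ell$ lies inside $\{\ell-t+1,\dots,\ell-1\}\cup\{\ell+1,\dots,\ell+t-1\}$ (intersected with $[n]$). For any edge $\{j,k\}$ of $G_\ell$ with $j<k$ one has $k-j\ge t$ but also $k-j\le 2t-2$, which forces $j<\ell<k$; this immediately exhibits $G_\ell$ as a bipartite graph with the two parts $\{\ell-t+1,\dots,\ell-1\}$ and $\{\ell+1,\dots,\ell+t-1\}$. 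The case $\ell>i$ uses the hypothesis $i\ge t$ (so $\ell-t\ge 1$ whenever $\ell>i\ge t$): a short check shows the surviving small endpoints lie in $\{\ell-t+1,\dots,i\}$, the large endpoints in $\{\ell+1,\dots,n\}$, and that intra-side edges are impossible because each side has diameter less than $t$.

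Given the bipartiteness of $G_\ell$, the Simis--Vasconcelos--Villarreal theorem yields that $I(G_\ell)$ is normally torsion-free. Since $P_\ell$ is a monomial prime, hence normally torsion-free, and lies in variables disjoint from those of $I(G_\ell)$, a standard argument shows that their sum is also normally torsion-free. Therefore $I(\mathfrak{m}\setminus\{x_\ell\})$ is normally torsion-free for every $\ell\in[n]$, and Corollary~\ref{Cor. 1} concludes that $I$ is nearly normally torsion-free. The main obstacle is the bipartiteness verification for $G_\ell$, which requires a careful but routine case split based on the position of $\ell$ relative to $i$ and the interval $[1,n]$; everything else is bookkeeping or invocation of prior results.
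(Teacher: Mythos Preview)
Your proof is correct and follows essentially the same strategy as the paper: both apply Corollary~\ref{Cor. 1} by writing each localization $I(\mathfrak{m}\setminus\{x_\ell\})$ as a sum of a monomial prime and the edge ideal of a bipartite graph in disjoint variables, then invoke the Simis--Vasconcelos--Villarreal theorem together with the fact that such a sum remains normally torsion-free (the paper cites \cite[Theorem~2.5]{SN} for this last step). One small imprecision: in the case $\ell>i$ you justify the absence of intra-side edges by saying ``each side has diameter less than $t$,'' but the side $\{\ell+1,\dots,n\}$ can certainly have diameter $\ge t$; the correct reason there is that every vertex in that side exceeds $i$ and hence cannot serve as the smaller endpoint of an edge of $G$.
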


\begin{proof}
We first assume that $i=t$. In this case, it follows from the definition that 
$$I=(x_1x_{t+1}, x_1x_{t+2}, \ldots, x_1x_{n}, x_2x_{t+2}, \ldots, x_2x_n, \ldots, x_tx_{2t}, x_tx_{2t+1}, \ldots, x_tx_n).$$ It is routine to check that $I$ is the edge ideal of a bipartite graph   with the vertex set $ \{1, 2, \ldots, t\} \cup \{t+1, t+2, \ldots,  n\}$.
In addition, \cite[Corollary 14.3.15]{V1} implies that $I$ is normally torsion-free. Now, let $i>t$. One can conclude from the definition that the minimal generators of $I$ are as follows:
$$x_i x_n, x_i x_{n-1},  \ldots, x_i x_{i+t}, x_{i-1} x_n, x_{i-1} x_{n-1},  \ldots, x_{i-1}x_{i+t-1}, $$
$$ x_{i-2} x_n, x_{i-2} x_{n-1},  \ldots, x_{i-2} x_{i+t-2},  \ldots, x_1x_n, x_1 x_{n-1},  \ldots, x_1 x_{t+1}.$$
Our strategy  is to use Corollary \ref{Cor. 1}. To do  this, one has to show that  $I(\mathfrak{m}\setminus \{x_z\})$ is   normally torsion-free  for all $z=1, \ldots, n$, where $\mathfrak{m}=(x_1, \ldots, x_n)$. First, let  $1\leq z \leq i$.  
Direct computation gives that 
\begin{align*}
I(\mathfrak{m}\setminus \{x_z\})&= (x_{\alpha} x_{\beta}~:~ \alpha = 1, \ldots, z-1, \beta = t+1, \ldots, z+t-1, \beta - \alpha \geq t) \\
& + (x_n, x_{n-1}, \ldots, x_{z+t}). 
\end{align*}
One can  easily see that  
$(x_{\alpha} x_{\beta}~:~ \alpha = 1, \ldots, z-1, \beta = t+1, \ldots, z+t-1, \beta - \alpha \geq t)$ is the edge ideal of a bipartite graph with the following vertex set 
$$\{1, \ldots, z-1\} \cup \{t+1, \ldots, z+t-1\},$$  
and so is normally torsion-free. Also, we know that every prime monomial ideal is normally torsion-free. Moreover, on account of the two ideals 
$(x_n, x_{n-1}, \ldots, x_{z+t})$ and 
$(x_{\alpha} x_{\beta}~:~ \alpha = 1, \ldots, z-1, \beta = t+1, \ldots, z+t-1, \beta - \alpha \geq t)$ do not have any common variables, we conclude from \cite[Theorem 2.5]{SN} that 
$I(\mathfrak{m}\setminus \{x_z\})$ is   normally torsion-free as well. Now, let $i+1 \leq z \leq n$. It is routine to check that 
\begin{align*}
I(\mathfrak{m}\setminus \{x_z\})&= (x_{\alpha} x_{\beta}~:~ \alpha = z-t+1, \ldots, i, \beta = z+1, \ldots, n, \beta - \alpha \geq t) \\
& + (x_1,  x_2,  \ldots, x_{z-t}). 
\end{align*}
It is not hard to see that  $(x_{\alpha} x_{\beta}~:~ \alpha = z-t+1, \ldots, i, \beta = z+1, \ldots, n, \beta - \alpha \geq t)$ 
is the edge ideal of a bipartite graph with the following vertex set $$\{z-t+1, \ldots, i\} \cup \{z+1, \ldots, n\},$$ 
and hence is normally torsion-free. In addition, by a similar argument, the ideal $(x_1,  x_2,  \ldots, x_{z-t})$ is normally torsion-free. Furthermore, since $(x_1,  x_2,  \ldots, x_{z-t})$ and 
$(x_{\alpha} x_{\beta}~:~ \alpha = z-t+1, \ldots, i, \beta = z+1, \ldots, n, \beta - \alpha \geq t)$ have no common variables, we are able to derive from \cite[Theorem 2.5]{SN} that 
$I(\mathfrak{m}\setminus \{x_z\})$ is   normally torsion-free too. Inspired by Corollary \ref{Cor. 1}, we conclude that $I$ is nearly normally torsion-free, as claimed. 
\end{proof}


We illustrate the statement of Proposition~\ref{App. 2} through the following example. 

\begin{example}
Let $u= x_4x_7$ and $t=3$. If $v=x_ax_b \in \mathcal{G}(B_3(u))$, then $a \in [1,4]$ and $b \in [4,7]$. The complete list of minimal generators of $B_3(x_4x_{7})$ is given below:
\begin{center}
\begin{tabular}{ c c c c}
$x_1x_4$\\
$x_1x_5$\quad & \quad $x_2x_5$\\
$x_1x_6$\quad & \quad $x_2x_6$\quad & \quad $x_3x_6$\\
$x_1x_7$\quad & \quad $x_2x_7$\quad & \quad $x_3x_7$\quad & \quad $x_4x_7$
\end{tabular}
\end{center}
The monomial localizations of $I$ at $\mathfrak{m}\setminus \{x_k\}$, for each $k=1, \ldots 7$, are listed below:

\begin{center}
\begin{tabular}{l}
$I(\mathfrak{m}\setminus \{x_1\})=(x_4,x_5,x_6,x_7)$;\\
$I(\mathfrak{m}\setminus \{x_2\})=(x_5,x_6,x_7)+(x_1x_4)$;\\
$I(\mathfrak{m}\setminus \{x_3\})=(x_6,x_7)+(x_1x_4,x_1x_5,x_2x_5) $;\\
$I(\mathfrak{m}\setminus \{x_4\})=(x_7)+(x_1,x_2x_5,x_2x_6,x_3x_6)$;\\
$I(\mathfrak{m}\setminus \{x_5\})=(x_1,x_2)+(x_3x_6,x_3x_7,x_4x_7)$;\\
$I(\mathfrak{m}\setminus \{x_6\})=(x_1,x_2,x_3)+(x_4x_7)$;\\
$I(\mathfrak{m}\setminus \{x_7\})=(x_1,x_2,x_3,x_4)$. 
\end{tabular}
\end{center}

Therefore, for each $k=1, \ldots, 7$, the monomial localization $I(\mathfrak{m}\setminus \{x_k\})$ is either a monomial prime ideal, or sum of a monomial prime ideal and an edge ideal of a bipartite graph, which have no common variables. In each of the above cases, $I(\mathfrak{m}\setminus \{x_k\})$ is a normally torsion-free ideal. Hence, it follows from Corollary~\ref{Cor. 1} that $I$ is nearly normally torsion-free.
\end{example}

We further investigate  the class of ideals of type $B_t(u)$ in the context of normally torsion-freeness  and nearly normally torsion-freeness. Given $a,b \in \mathbb{N}$, we set $[a,b]:=\{c \in \mathbb{N}: a \leq c \leq b\}$. Let $u:=x_{i_1}x_{i_2}\cdots x_{i_{d-1}}x_{i_d}$ and 
\begin{equation}\label{ak}
A_k:=[(k-1)t+1,i_k], \text{ for each } k=1, \ldots d.
\end{equation}

We can describe the minimal generators of $I=B_t(u)$ in the following way: if $x_{j_1}\cdots x_{j_d} \in \mathcal{G}(I)$, then for each $k=1, \ldots ,d$, we have $j_k \in A_k$.  For example, a complete list of minimal generators of $B_2(x_3x_5x_{7})$ is given below:
\begin{center}
\begin{tabular}{ c c c }
$x_1x_3x_5$\\
$x_1x_3x_6$\\
$x_1x_3x_7$\\

$x_1x_4x_6$\quad & \quad $x_2x_4x_6$\\
$x_1x_4x_7$\quad & \quad $x_2x_4x_7$\\
$x_1x_5x_7$\quad & \quad $x_2x_5x_7$\quad & \quad $x_3x_5x_7$
\end{tabular}
\end{center}
Here, $A_1=[1,3]$, $A_2=[3,5]$, and $A_3=[5,7]$.

Let $u=x_{i_1}x_{i_2}\cdots  x_{i_{d-1}}x_{i_d}$ with $i_{d-1} \leq(d-1)t$, then it forces us to have $i_k \leq kt$, for each $k=1,\ldots, d-2$ because $u$ is a $t$-spread monomial. In this case, $A_i\cap A_j = \emptyset$. For example, a complete list of minimal generators of $B_3(x_3x_6x_9)$ is given below:
\begin{center}
\begin{tabular}{ c c c }
$x_1x_4x_7$\\
$x_1x_4x_8$\\
$x_1x_4x_9$\\
$x_1x_5x_8$\quad & \quad $x_2x_5x_8$\\
$x_1x_5x_9$\quad & \quad $x_2x_5x_9$\\
$x_1x_6x_9$\quad & \quad $x_2x_6x_9$\quad & \quad $x_3x_6x_9$

\end{tabular}
\end{center}

Here, $A_1=[1,3]$, $A_2=[4,6]$, and $A_3=[7,9]$. 

\begin{remark}\label{dpartiteduniform}
If $u=x_{i_1}x_{i_2}\cdots  x_{i_{d-1}}x_{i_d}$ with $i_{d-1} \leq(d-1)t$, then $B_t(u)$ is an edge ideal of a {\em  $d$-uniform $d$-partite} hypergraph whose vertex partition is given by $A_1, \ldots, A_d$.   Recall that $\mathcal{H}$ is a  $d$-partite hypergraph if its vertex set $V_{\mathcal{H}}$ is a disjoint union of sets $V_1, \ldots, V_d$ such that if $E$ is an edge of $\mathcal{H}$, then $|E \cap V_i | \leq 1$. Moreover, a hypergraph is $d$-uniform if each edge of $\mathcal{H}$ has size $d$.  In particular, if $\mathcal{H}$ is a $d$-uniform $d$-partite hypergraph with vertex partition $V_1, \ldots, V_d$, then $|E|=d$ and $|E \cap V_i | =1$ for each $E \in E_{\mathcal{H}}$.
\end{remark}

Let $\mathcal{A}_t$ denote the family of all ideals of the form $B_t(u)$ such that  $B_t(u)$ is an edge ideal of a $d$-partite $d$-uniform hypergraph, for some $d$. From the above discussion it follows that $B_t(u) \in \mathcal{A}_t$ if and only if there exists some $d$ such that  $u=x_{i_1}x_{i_2}\cdots  x_{i_{d-1}}x_{i_d}$ with $i_{d-1} \leq(d-1)t$.

Let $\mathcal{H}$ be a hypergraph. A sequence $v_1, E_1, v_2, E_2, \ldots, v_s, E_s, v_{s+1}=v_1$ of distinct edges  and vertices of $\mathcal{H}$ is called a cycle in $\mathcal{H}$ if $v_i,v_{i+1} \in E_i$, for all $i=1, \ldots, s$. Such a cycle is called {\em special} if no edge contain more than two vertices of the cycle. After translating the language of simplicial comlexes to hypergraphs, it can be seen from \cite[Theorem 10.3.16]{HH1} that if a hypergraph does not have any special odd cycles, then its edge ideal is normally torsion-free. The following example shows that hypergraphs with edge ideals in $\mathcal{A}_t$, may contain special odd cycles. 

\begin{example}\label{oddcycle}
Let $\mathcal{H}$ be the hypergraph whose edge ideal is $I=B_3(x_3x_6x_9x_{12})$. Then the following sequence of the vertices and the edges of $\mathcal{H}$ gives a special odd cycle. 
\[
1, \{1,4,9,12\}, 9, \{2,5,9,12\}, 5, \{1,5,8,11\}, 1
\]
\end{example}

For any monomial ideal $I \subset R=K[x_1, \ldots,x_n]$, the  {\it deletion} of $I$ at $x_i$ with $1\leq i \leq n$, denoted by $I\setminus x_i$, is obtained by setting $x_i=0$ in every minimal generator of $I$, that is, we delete every minimal generator $u\in \mathcal{G}(I)$ with $x_i\mid u$. For a monomial $u \in R$, we denote the support of $u$ by $\mathrm{supp}(u)=\{x_i : x_i|u \}$. Moreover, for a monomial ideal $I\subset R$, we set $\mathrm{supp}(I)=\cup_{u \in \mathcal{G}(I)} \mathrm{supp}(u)$. Then we observe the following:

\begin{remark}\label{rem1}
Let $u=x_{i_1}x_{i_2}\cdots  x_{i_{d-1}}x_{i_d}$ with $i_{d-1} \leq(d-1)t$, and $I=B_t(u) \subset R$. Note that we have $I \in \mathcal{A}_t$. 
\begin{enumerate}
\item If $i_k < kt$, for some $1 \leq k \leq d-1$, then the variables $x_{i_k+1}, \ldots, x_{kt}$ do not appear in $\mathrm{supp}(I)$. For example, a complete list of minimal generators of $B_3(x_2x_5x_9)$ is given below:
\begin{center}
\begin{tabular}{ c c c }
$x_1x_4x_7$\\
$x_1x_4x_8$\\
$x_1x_4x_9$\\
$x_1x_5x_8$\quad & \quad $x_2x_5x_8$\\
$x_1x_5x_9$\quad & \quad $x_2x_5x_9$\\
\end{tabular}
\end{center}
Here, $A_1=[1,2]$, $A_2=[4,5]$, and $A_3=[7,9]$, and $x_3, x_6 \notin \mathrm{supp}(B_3(x_2x_5x_9))$.

\item If $i_d<n$, then $x_{{i_d}+1}, \ldots, x_n \notin \mathrm{supp} (I)$. Hence we can always assume that $i_d=n$. 

\item If $i_k > (k-1)t+1$, that is, $|A_k| >1$  for some $k = 1, \ldots, d$, then $I\setminus x_{i_k}=B_t(v)$ where $v$ is chosen with the following property: $v=x_{j_1}\ldots x_{j_d} \in \mathcal{G}(I\setminus x_{i_k})$ and for any other $w=x_{l_1}\ldots x_{l_d}  \in \mathcal{G}(I\setminus x_{i_k})$, we have $l_k \leq j_k$. For example, $B_3(x_2x_5x_9)\setminus x_5=B_3(x_1x_4x_9)$. Therefore,  we conclude that $I\setminus x_{i_k} \in \mathcal{A}_t$, for all $x_{i_k}$, with $k = 1, \ldots, d$.

\item The definition of $A_k$ immediately implies that $|A_k|=1$ if and only if $i_k=(k-1)t+1$. Moreover, if $i_k=(k-1)t+1$ for some $1 \leq k \leq d$, then $i_j=(j-1)t+1$, for all $j \leq k$ because $u$ is a $t$-spread monomial. In this case $x_1x_{t+1}\cdots x_{(k-1)t+1}$ divides every minimal generator of $B_t(u)$ and hence $B_t(u)=x_1x_{t+1}\cdots x_{(k-1)t+1} J$, where $J$ can be identified as a $t$-spread principal Borel ideal generated in degree $d-k$ in it's ambient polynomial ring. Hence, $J=B_t(v) \in \mathcal{A}_t$, with $v=u/x_1x_{t+1}\cdots x_{(k-1)t+1} $.

For example, if $u=x_1x_4x_9x_{12}$, then $B_3(u)=x_1x_4J$ with $J=B_t(v) \subset K[x_7, \ldots, x_{12}]$ and $v=x_9x_{12}$. In fact, by substituting $y_{i-6}=x_i$ for $i=7, \ldots, 12$,  $J$ can be identified with $B_3(y_3y_6)\subset K[y_1, \ldots, y_6]$. 

\item If $|A_k| \geq 2$, then $i_k > (k-1)t+1$. This forces  $|A_j| \geq 2$ for all $j=k, \ldots,d$ because $u$ is a $t$-spread monomial. 

\end{enumerate}
\end{remark}

In what follows, our aim is to show that for any fixed $t$, all ideals in $A_t$ are normally torsion-free. It is known from \cite[Corollary]{AEL} that $B_t(u)$ satisfies the persistence property and the Rees algebra $\mathcal{R}(B_t(u))$ is a normal Cohen-Macaulay domain. It is a well-known fact that for any non-zero graded ideal $I \subset R=K[x_1, \ldots, x_n]$, if $\mathcal{R}(I)$ is Cohen-Macaulay then $\lim_{k \rightarrow \infty} \mathrm{depth}(R/I^k)= n-\ell(I)$, for example see \cite[Proposition 10.3.2]{HH1}, where $\ell(I)$ denotes the analytic spread of $I$, that is, the Krull dimension of the fiber ring $\mathcal{R}(I)/\mathfrak{m}\mathcal{R}(I)$. This leads us to the following corollary which will be used in the subsequent results.  Note that due to Remark~\ref{rem1}(1), one needs to pay attention to the ambient ring of $B_t(u)$. Here by the ambient ring, we mean the polynomial ring $R$ containing $B_t(u)$ such that all variables in $R$ appear in $\mathrm{supp}(B_t(u))$.

\begin{corollary}\label{cor1}
Let $I=B_t(u)\subset R=K[x_i: x_i \in \mathrm{supp}(I)]$. If $\ell(I)<n$, then $\lim_{k \rightarrow \infty} \mathrm{depth}(R/I^k)\neq 0$. In particular, if $\ell(I)<n$, then $\mathfrak{m} \notin \mathrm{Ass} (R/I^k)$, where $\mathfrak{m}$ is the unique graded maximal ideal of $R$.
\end{corollary}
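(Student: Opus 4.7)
The plan is to apply the two facts stated just before the corollary: the Rees algebra $\mathcal{R}(B_t(u))$ is Cohen--Macaulay by \cite[Corollary]{AEL}, and for any nonzero graded ideal $I\subset R=K[x_1,\dots,x_n]$ with Cohen--Macaulay Rees algebra we have the limit formula
\[
\lim_{k\to\infty}\mathrm{depth}(R/I^k)=n-\ell(I).
\]
So the first half of the corollary is essentially immediate: since $\mathcal{R}(I)$ is Cohen--Macaulay, the limit exists and equals $n-\ell(I)$; the hypothesis $\ell(I)<n$ then forces $n-\ell(I)>0$, hence $\lim_{k\to\infty}\mathrm{depth}(R/I^k)\neq 0$. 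I would state this as the first step.

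For the ``in particular'' part, the strategy is a standard persistence-type argument. Recall that for a graded ideal $I$ in a polynomial ring with graded maximal ideal $\mathfrak{m}$, the condition $\mathfrak{m}\in\mathrm{Ass}(R/I^k)$ is equivalent to $\mathrm{depth}(R/I^k)=0$. Suppose for contradiction that $\mathfrak{m}\in\mathrm{Ass}(R/I^{k_0})$ for some $k_0$. By \cite[Corollary]{AEL}, $I=B_t(u)$ satisfies the persistence property, so $\mathfrak{m}\in\mathrm{Ass}(R/I^k)$ for all $k\geq k_0$. Therefore $\mathrm{depth}(R/I^k)=0$ for all $k\geq k_0$, which gives $\lim_{k\to\infty}\mathrm{depth}(R/I^k)=0$, contradicting the first half of the corollary. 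Hence $\mathfrak{m}\notin\mathrm{Ass}(R/I^k)$ for every $k$.

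The proof has essentially no obstacle because it is just a combination of the two cited results together with the translation $\mathrm{depth}(R/I^k)=0 \iff \mathfrak{m}\in\mathrm{Ass}(R/I^k)$ and the persistence property. The only thing to be slightly careful about is that the ambient ring is taken to be $K[x_i:x_i\in\mathrm{supp}(I)]$, as emphasized in Remark~\ref{rem1}(1), so that ``$n$'' in the limit formula is genuinely the number of variables appearing in $I$; this is why the hypothesis $\ell(I)<n$ is the right one to ensure $n-\ell(I)>0$.
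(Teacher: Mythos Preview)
Your proof is correct and follows essentially the same approach as the paper: the corollary is presented there as an immediate consequence of the preceding discussion, which cites \cite[Corollary]{AEL} for both the Cohen--Macaulayness of $\mathcal{R}(B_t(u))$ and the persistence property, together with the limit formula $\lim_{k\to\infty}\mathrm{depth}(R/I^k)=n-\ell(I)$ from \cite[Proposition 10.3.2]{HH1}. Your explicit use of persistence to pass from the limit statement to the statement for all $k$ is exactly the intended argument.
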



Next we compute $\ell(I)$, for all $I \in \mathcal{A}_t$. For this, we first recall  the definition of  linear relation graph from \cite[Definition 3.1]{HQ}. 

\begin{definition}\label{linearrelationgraphdef}
Let $I\subset R$ be a monomial ideal with $\mathcal{G}(I)=\{u_1, \ldots, u_m\}$. The {\em linear relation graph} $\Gamma$ of $I$ is the graph with the edge set 
\[
E(\Gamma)=\{\{i,j\}: \text{there exist $u_k$,$u_l \in \mathcal{G}(I)$ such that $x_iu_k=x_ju_l$}\},
\]
 and the vertex set $V(\Gamma)=\bigcup_{\{i,j\}\in E(\Gamma)}\{i,j\}$. 
 \end{definition}
 
 It is known from \cite[Lemma 5.2]{DHQ} that if $I$ is a monomial ideal generated in degree $d$ and the first syzygy of $I$ is generated in degree $d+1$ then  
\begin{equation}\label{eq1}
\ell(I)=r-s+1
\end{equation}
where $r$ is the number of vertices and $s$ is the number of connected components of the linear relation graph of $I$. If $u$ is a $t$-spread monomial of degree $d$, then it can be concluded from \cite[Theorem 2.3]{AEL} that the first syzygy of $B_t(u)$ is generated in degree $d+1$. Hence  \cite[Lemma 5.2]{DHQ}  gives a way to compute the analytic spread of $B_t(u)$. Before proving the other main result of this section, we first analyze the linear relation graph of $B_t(u)$.

\begin{lemma}\label{aklemma}
Let $u=x_{i_1}x_{i_2}\cdots x_{i_{d-1}}x_{i_d}$ and $A_k=[(k-1)t+1,i_k]$, for each $k=1, \ldots d$. Moreover, let $\Gamma$ be the linear relation graph of $I=B_t(u)$. Then we have the following:

\begin{enumerate}
\item[(i)]  For some $k=1, \ldots, d$, $|A_k| \geq 2$ if and only if $A_k \subseteq V(\Gamma)$. Moreover, if $|A_k| \geq 2$, then the induced subgraph of $\Gamma$ on $A_k$ is a complete graph.
\item[(ii)] Let $i_k < kt+1$, for some $1 \leq k \leq d$. 
Then $\Gamma$ does not contain any edge with one endpoint in $A_r$ and the other endpoint in $A_s$ for any $1 \leq r<s\leq k$. 
\end{enumerate}
\end{lemma}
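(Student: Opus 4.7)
The plan is to treat (i) and (ii) separately, exploiting that every element of $\mathcal{G}(I)$ is a square-free monomial $x_{j_1}\cdots x_{j_d}$ with $j_l \in A_l$ for each $l$.

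For the forward direction of (i), given $a, b \in A_k$ with $a < b$, I would explicitly construct two minimal generators of $I$ that differ only in the $k$-th position. Setting $j_r = (r-1)t+1$ for $r < k$ and $j_r = b + (r-k)t$ for $r > k$, define $w_1 = x_{j_1}\cdots x_{j_{k-1}} x_b x_{j_{k+1}}\cdots x_{j_d}$ and $w_2 = x_{j_1}\cdots x_{j_{k-1}} x_a x_{j_{k+1}}\cdots x_{j_d}$. A direct check of the $t$-spread inequalities $j_r - j_{r-1} \geq t$ and the memberships $j_r \in A_r$ (using that $u$ is $t$-spread, so $i_{r+1} \geq i_r + t$) shows $w_1, w_2 \in \mathcal{G}(I)$, and $x_a w_1 = x_b w_2$ yields the edge $\{a,b\} \in E(\Gamma)$. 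Varying $a, b$ over all of $A_k$ gives both $A_k \subseteq V(\Gamma)$ and the completeness of the induced subgraph on $A_k$. For the converse, assume $|A_k| = 1$, so $A_k = \{(k-1)t+1\}$; then $j_k = (k-1)t+1$ for every $x_{j_1}\cdots x_{j_d} \in \mathcal{G}(I)$, and hence $x_{(k-1)t+1}$ divides every minimal generator of $I$. A potential edge $\{(k-1)t+1, q\}$ with $q \neq (k-1)t+1$ would produce $w_1, w_2 \in \mathcal{G}(I)$ satisfying $x_{(k-1)t+1} w_1 = x_q w_2$; comparing multiplicities on both sides, using the square-freeness of $w_1, w_2$, would force $x_{(k-1)t+1} \nmid w_1$, contradicting the divisibility just observed.

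For (ii), I would first extract from $i_k \leq kt$ together with the $t$-spread hypothesis on $u$ the bound $i_j \leq jt$ for every $j \leq k$, so $A_j \subseteq [(j-1)t+1, jt]$ and the sets $A_1, \ldots, A_k$ are pairwise disjoint. Assume, for contradiction, that $\{a, b\} \in E(\Gamma)$ with $a \in A_r$, $b \in A_s$, and $1 \leq r < s \leq k$; then $a < b$, and from $x_a w_1 = x_b w_2$ with $w_1, w_2 \in \mathcal{G}(I)$ square-free, a multiset comparison yields $x_b \mid w_1$, $x_a \nmid w_1$, $x_a \mid w_2$, $x_b \nmid w_2$, and $v := w_1/x_b = w_2/x_a$. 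Writing $w_1 = x_{j_1}\cdots x_{j_d}$ and $w_2 = x_{j'_1}\cdots x_{j'_d}$ in ascending order, the memberships $j_l, j'_l \in A_l$, together with $A_l \subseteq [(l-1)t+1, lt]$ for $l \leq k$ and $(l-1)t+1 > kt \geq b$ for $l > k$, pin the position of $b$ in $w_1$ to $s$ and the position of $a$ in $w_2$ to $r$. Writing the sorted support of $v$ as $c_1 < \cdots < c_{d-1}$, the two reconstructions of $v$ give $c_l = j_l$ for $l < s$ and $c_l = j_{l+1}$ for $l \geq s$, and likewise $c_l = j'_l$ for $l < r$ and $c_l = j'_{l+1}$ for $l \geq r$. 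Combining these for indices $r \leq l < s$ gives $c_l \in A_l \cap A_{l+1}$; since $l+1 \leq s \leq k$, these intervals are disjoint, producing the sought contradiction.

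The main obstacle is the position-and-bound bookkeeping in both parts: in (i) one must verify the $t$-spread inequalities of $w_1, w_2$ at the transition positions $k-1, k, k+1$ together with the bounds $j_r \leq i_r$, which rely on iterated use of $i_{r+1} \geq i_r + t$; in (ii) one must correctly track how removing a single variable from $w_1$ (respectively $w_2$) re-indexes the remaining ascending factorization so that the crucial memberships $c_l \in A_l$ and $c_l \in A_{l+1}$ line up for $r \leq l < s$. Once these index-shifts are handled, the remaining steps reduce to elementary interval inequalities on the $A_l$.
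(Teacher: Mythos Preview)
Your proposal is correct. For part~(i) it follows essentially the same strategy as the paper: an explicit construction of two generators differing only at position $k$ for the forward direction, and the observation that $|A_k|=1$ forces $x_{(k-1)t+1}$ to divide every generator for the converse. Your choice of tail indices $j_r=b+(r-k)t$ for $r>k$ is in fact more carefully verified than the paper's construction, which writes $v=x_1\cdots x_{(k-2)t+1}x_f x_{kt+1}\cdots x_{i_d}$ with the indices after position $k$ left somewhat imprecise.

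For part~(ii) you take a genuinely different route. The paper argues directly: given $f\in A_r$, $h\in A_s$ and any $u\in\mathcal G(I)$ with $x_h\mid u$, the monomial $(u/x_h)x_f$ acquires two indices in $A_r$ (namely $f$ and the original $j_r$), and since $|A_r|\leq t$ these are either equal (non-square-free) or within distance $<t$ (not $t$-spread), so $(u/x_h)x_f\notin\mathcal G(I)$. Your argument instead compares the two sorted factorizations of the common factor $v=w_1/x_b=w_2/x_a$ index by index, deducing that some $c_l$ with $r\leq l<s$ must lie in $A_l\cap A_{l+1}=\emptyset$. The paper's argument is shorter and avoids the re-indexing bookkeeping you flag as the main obstacle; your approach, on the other hand, is more systematic and makes the role of the disjointness of the $A_l$'s for $l\leq k$ very transparent.
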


\begin{proof}
(i)  First we show that if $A_k \subseteq V(\Gamma)$, then $|A_k| \geq 2$. Assume that $|A_k|=1$, for some $k$.  Then from Remark~\ref{rem1}(4), we see that $i_k=(k-1)t+1$. In this case, each generator of $B_t(u)$ is a multiple of $x_{i_k}$ and by following the definition of $\Gamma$, we see that $A_k=\{i_k\} \not\subseteq V(\Gamma) $. 

Conversely,  we show that if $|A_k| \geq 2$, for some $k$, then $A_k \subseteq V(\Gamma)$ and the induced subgraph on $A_k$ is a complete graph. Take any $f,h \in A_k$ with $f<h$. If $k=1$, then  $f<h \leq i_1$ and the monomials $v=x_h(u/x_{i_1})$ and $v'=x_f(u/x_{i_1})$ belong to $\mathcal{G}(I)$ because $I$ is $t$-spread strongly stable. Hence $\{f,h\} \in \Gamma$, as required. Similar argument shows that if $|A_d| \geq 2$, then  $|A_d| \subseteq V(\Gamma)$ and the induced subgraph on $|A_d|$ is also a complete graph.  Now assume that $1 < k < d$. Since $|A_k| \geq 2$, we have $i_k > (k-1)t+1$ and $(k-1)t+1 \leq f<h \leq i_k$. By using the fact that $I$ is $t$-spread strongly stable, we see that $v=x_1 \cdots x_{(k-2)t+1}x_f x_{kt+1}\cdots x_{i_d} \in \mathcal{G}(I)$ and $v'=x_1 \cdots x_{(k-2)t+1}x_h x_{kt+1}\cdots x_{i_d} \in \mathcal{G}(I)$. Moreover, $v=(v'/x_h)x_f$ and $\{f,h\} \in E(\Gamma)$, as required.

(ii) If $|A_r|=1$ or $|A_s|=1$, then by using (i), we see that the statement holds trivially.  Assume that $|A_r| \geq 2$ and $|A_s| \geq 2$. Since $i_k< kt+1$, we have $i_j<jt+1$, for all $j=1, \ldots, k$ because $u$ is a $t$-spread monomial. In this case $A_r \cap A_s = \emptyset$ for all $1 \leq r < s \leq k$. Moreover, $|A_r| <t$ for all $r=1, \ldots, k$. Take $f \in A_r$ and $h \in A_s$ for some $1<r < s \leq k$. Then for any $u \in \mathcal{G}(I)$ with $x_h | u$, we have $(u/x_h)x_f \notin G(I)$. This can be easily verified because $(u/x_h)x_f$ is not a $t$-spread monomial as it contains two variables with indices in $A_r$ and $|A_r|<t$. Hence, we do not have any edge in $\Gamma$ of the form $\{f,h\}$ where $f \in A_r$ and $h \in A_s$.
\end{proof}

\begin{proposition}\label{limdepth}
Let  $I=B_t(x_{i_1}x_{i_2}\cdots  x_{i_{d-1}}x_{i_d}) \subset R=K[x_i:  x_i \in \mathrm{supp}(I)]$ with $i_{d-1} \leq(d-1)t$. Then $\ell(I) < n=|\mathrm{supp}(I)|$. In particular, $\mathfrak{m} \notin \mathrm{Ass} (R/I^k)$ for all $k\geq 1$, where $\mathfrak{m}$ is the unique graded maximal ideal of $R$.
\end{proposition}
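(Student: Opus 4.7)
The plan is to apply the linear relation graph formula \eqref{eq1} to compute $\ell(I)$ explicitly, compare it with $n$, and then invoke Corollary~\ref{cor1}. Since $i_{d-1}\leq(d-1)t$, the sets $A_1,\ldots,A_d$ defined in \eqref{ak} are pairwise disjoint and satisfy $\mathrm{supp}(I)=A_1\cup\cdots\cup A_d$, so $n=\sum_{k=1}^{d}|A_k|$. Let $\Gamma$ be the linear relation graph of $I$ and set $S:=\{k : |A_k|\geq 2\}$. By Lemma~\ref{aklemma}(i), $V(\Gamma)=\bigcup_{k\in S}A_k$ and each $A_k$ with $k\in S$ induces a complete subgraph of $\Gamma$.

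Next, I would show that $\Gamma$ has no edge connecting vertices from two different parts $A_r$ and $A_s$. Suppose toward contradiction that $j\in A_r$, $j'\in A_s$ with $r\neq s$, and there exist $u_k,u_l\in\mathcal{G}(I)$ with $x_j u_k=x_{j'}u_l$. By Remark~\ref{dpartiteduniform}, $I$ is the edge ideal of a $d$-uniform $d$-partite hypergraph with parts $A_1,\ldots,A_d$, so each of $u_k,u_l$ contains exactly one variable whose index lies in each $A_i$. Counting the variables of $x_j u_k$ and $x_{j'}u_l$ whose indices lie in $A_r$ yields $2$ on one side and $1$ on the other, contradicting the equality. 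Hence, the connected components of $\Gamma$ are precisely $\{A_k : k\in S\}$, each being a complete graph, giving $r=\sum_{k\in S}|A_k|$ and $s=|S|$.

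By \cite[Theorem 2.3]{AEL}, the first syzygies of $B_t(u)$ are generated in degree $d+1$, so \eqref{eq1} yields $\ell(I)=r-s+1=\sum_{k\in S}|A_k|-|S|+1$. A direct count then gives
\[
n-\ell(I)=\sum_{k\notin S}|A_k|+|S|-1=(d-|S|)+|S|-1=d-1\geq 1,
\]
since the hypothesis $i_{d-1}\leq(d-1)t$ forces $d\geq 2$. Therefore $\ell(I)<n$, and Corollary~\ref{cor1} delivers $\mathfrak{m}\notin\mathrm{Ass}(R/I^k)$ for every $k\geq 1$.

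The main technical point is the exclusion of cross-part edges in $\Gamma$. Lemma~\ref{aklemma}(ii) handles only the pairs $A_r,A_s$ with $r<s\leq d-1$; the $d$-partite/$d$-uniform structure supplied by Remark~\ref{dpartiteduniform} is what uniformly excludes edges reaching into $A_d$ as well, and it is this ingredient that makes the component count of $\Gamma$ transparent.
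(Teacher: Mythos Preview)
Your argument is correct and follows the same overall strategy as the paper's proof: analyze the connected components of the linear relation graph $\Gamma$ via Lemma~\ref{aklemma}(i), feed the count into formula~\eqref{eq1}, and finish with Corollary~\ref{cor1}. The differences are in execution rather than in method. First, to exclude edges between distinct blocks $A_r$ and $A_s$, the paper appeals to Lemma~\ref{aklemma}(ii), while you use the $d$-partite $d$-uniform structure from Remark~\ref{dpartiteduniform} and a parity count; as you correctly note, Lemma~\ref{aklemma}(ii) with $k=d-1$ does not literally cover edges reaching into $A_d$, so your counting argument is cleaner and closes that small gap uniformly. Second, the paper concludes $\ell(I)<n$ by a short case split (either $|S|=1$, in which case $\ell(I)=|A_d|<n$, or $|S|\ge 2$, in which case $\Gamma$ has at least two components), whereas you compute $n-\ell(I)=d-1$ exactly. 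Both routes are equally valid; yours gives a little more information.
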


\begin{proof}
If $I$ is a principal ideal, then the assertion holds trivially. Therefore, we may assume that $I$ is not a principal ideal. To show that $\ell(I)<n$, from the equality in (\ref{eq1}), it is enough to prove that the linear relation graph $\Gamma$ of $I$ has more than one connected components. Let $u=x_{i_1}x_{i_2}\cdots x_{i_{d-1}}x_{i_d}$ and $A_k=[(k-1)t+1,i_k]$, for $k=1, \ldots d$. Since $I$ is not principal, it follows from Remark~\ref{rem1}(4) that $|A_k| \geq 2$, for some $1 \leq k \leq d$.  

Since $i_{d-1} \leq(d-1)t$, one can deduce from Lemma~\ref{aklemma} that we do not have any edge in $\Gamma$ of the form $\{f,h\}$ with $f$ and $h$ in different $A_k$'s and if $|A_k| \geq 2$, for some $i$, then $A_k \subset V(\Gamma)$ and the induced subgraph on $A_k$ is a complete graph.  This shows that $V(\Gamma)$ is the union of all $A_i$'s for which $|A_i|\geq 2$. Moreover, each such $A_i$ determines a connected component in $\Gamma$. Hence $\Gamma$ has only one connected components if and only if $|A_d|\geq 2$ and $|A_i| =1$, for all $i=1, \ldots, d-1$. In this case, $\ell(I)= |A_d|<n$. Otherwise, $\Gamma$ has at least two connected components and again we obtain $\ell(I)<n$, as required. Then the assertion $\mathfrak{m} \notin \mathrm{Ass} (R/I^k)$, for all $k \geq 1$, follows from Corollary~\ref{cor1}.
\end{proof}

Before proving Theorem~\ref{main-NTF}, we first recall the following theorem which gives a criterion to check whether a square-free monomial ideal is normally torsion-free or not.

\begin{theorem}\label{use}\cite[Theorem 3.7]{SNQ}
Let $I$ be a square-free  monomial ideal in a polynomial ring $R=K[x_1, \ldots, x_n]$ over a field $K$ and $\mathfrak{m}=(x_1, \ldots, x_n)$. If there exists a square-free monomial  $v \in I$ such that $v\in \mathfrak{p}\setminus \mathfrak{p}^2$ for any $\mathfrak{p}\in \mathrm{Min}(I)$, and $\mathfrak{m}\setminus x_i \notin \mathrm{Ass}(R/(I\setminus x_i)^s)$ for all $s$ and $x_i \in \mathrm{supp}(v)$,  then the following statements hold:
\begin{itemize}
\item[(i)] $I$  is normally torsion-free. 
\item[(ii)]  $I$ is normal.
\item[(iii)]  $I$ has the strong persistence proeprty. 
\item[(iv)]  $I$ has the persistence property. 
\item[(v)]  $I$ has the symbolic strong persistence property. 
\end{itemize}

\end{theorem}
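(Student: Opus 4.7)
The statements (ii)--(v) follow from (i) via a chain of well-known implications already invoked in Corollary~\ref{Cor. NFT1} above: for square-free monomial ideals, normally torsion-free implies normal \cite[Theorem 1.4.6]{HH1}; normal monomial ideals have the strong persistence property \cite[Theorem 6.2]{RNA}; strong persistence implies persistence \cite{HQ}; and strong persistence implies symbolic strong persistence \cite[Theorem 11]{RT}. Hence the entire content of the theorem reduces to establishing (i), namely $\mathrm{Ass}(R/I^k)=\mathrm{Min}(I)$ for every $k\geq 1$.

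For $k=1$ this is automatic since $I$ is square-free. For $k\geq 2$, I would argue by contradiction: assume that $Q\in \mathrm{Ass}(R/I^k)\setminus \mathrm{Min}(I)$ is an embedded monomial prime, witnessed by a monomial $w$ with $(I^k :_R w)=Q$. The plan has three steps. First, produce a variable $x_i\in \mathrm{supp}(v)\setminus Q$, so that $Q\subseteq \mathfrak{m}\setminus\{x_i\}$. Second, using the decomposition $I=(I\setminus x_i)+x_i(I:_R x_i)$, examine $w$ modulo its $x_i$-divisibility and produce a reduced witness $w'$ together with an integer $s\leq k$ realizing $((I\setminus x_i)^s :_R w')=Q'$ for a monomial prime $Q'\subseteq \mathfrak{m}\setminus\{x_i\}$ containing $Q$. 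Third, invoke the hypothesis $\mathfrak{m}\setminus\{x_i\}\notin \mathrm{Ass}(R/(I\setminus x_i)^s)$ to rule out $Q'=\mathfrak{m}\setminus\{x_i\}$; since $I\setminus x_i$ is also square-free, its associated primes at level $s=1$ are exactly its minimal primes, and a short descent on $s$ then forces $Q$ itself to be minimal, contradicting the assumption.

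The first step is the main obstacle. The easy sub-case is $\mathrm{supp}(v)\not\subseteq Q$, which delivers the required $x_i$ at once. The residual possibility $\mathrm{supp}(v)\subseteq Q$ is where the strength of $v\in \mathfrak{p}\setminus \mathfrak{p}^2$ for every $\mathfrak{p}\in\mathrm{Min}(I)$ must be used: each minimal prime contains \emph{exactly one} variable of $v$, so if $Q$ contains all of $\mathrm{supp}(v)$ and strictly contains some $\mathfrak{p}\in\mathrm{Min}(I)$, there is a variable $x_j\in \mathrm{supp}(v)\cap (Q\setminus \mathfrak{p})$ which one can use to modify $w$ (multiplying by a suitable power of $v$ and exploiting that $v^k\in \mathfrak{p}^k\setminus \mathfrak{p}^{k+1}$) into a witness where $x_j$ is ``freed,'' reducing to the easy sub-case. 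A secondary technical difficulty lies in reconciling the deletion $I\setminus x_i$ with the standard monomial localization $I(\mathfrak{m}\setminus\{x_i\})$ underlying \cite[Lemma 4.6]{RNA}: these operations differ (one removes generators containing $x_i$, the other substitutes $x_i=1$), and the bridge between them is precisely the tracking of $x_i$-divisibility of $w$ in Step~2, made possible because $v$ is square-free and thus contributes at most one factor of $x_i$ to any product along the way.
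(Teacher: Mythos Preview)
The paper does not contain a proof of this statement: Theorem~\ref{use} is quoted verbatim from \cite[Theorem~3.7]{SNQ} and used as a black box in the proof of Theorem~\ref{main-NTF}. There is therefore no argument in the present paper against which to compare your proposal.

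As for the proposal itself, the reduction of (ii)--(v) to (i) is correct and matches exactly the chain of implications the paper invokes in Corollary~\ref{Cor. NFT1}. For (i), however, what you have written is an outline rather than a proof, and the hardest parts are left as intentions. In Step~1, the residual case $\mathrm{supp}(v)\subseteq Q$ is not actually handled: your suggestion to ``multiply $w$ by a suitable power of $v$'' in order to free a variable $x_j$ does not obviously preserve the property $(I^k:_R w)=Q$, and you do not specify what the new witness is or why its annihilator is still $Q$. Step~2 gestures at the decomposition $I=(I\setminus x_i)+x_i(I:_R x_i)$ but does not carry out the bookkeeping needed to pass from a colon against $I^k$ to a colon against $(I\setminus x_i)^s$; the interaction between deletion and monomial localization that you flag as a ``secondary technical difficulty'' is in fact the heart of the matter and is not resolved here. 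The ``short descent on $s$'' in Step~3 is likewise asserted rather than argued. If you want to verify a complete proof, you should consult \cite{SNQ} directly.
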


Now, we state the second main result of this section. 
\begin{theorem}\label{main-NTF}
Let  $I=B_t(x_{i_1}x_{i_2}\cdots  x_{i_{d-1}}x_{i_d}) \subset R=K[x_i:  x_i \in \mathrm{supp}(I)]$ with $i_{d-1} \leq(d-1)t$. Then $I$ is normally torsion-free.
\end{theorem}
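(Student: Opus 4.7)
The plan is to invoke Theorem~\ref{use} with the square-free monomial $v = u = x_{i_1}x_{i_2}\cdots x_{i_d}$; because $u$ is a $t$-spread generator of $I$, it is square-free and lies in $I$. I need to verify the two hypotheses: (i) $u \in \mathfrak{p} \setminus \mathfrak{p}^2$ for every $\mathfrak{p} \in \mathrm{Min}(I)$, and (ii) $\mathfrak{m} \setminus x_{i_k} \notin \mathrm{Ass}_R(R/(I \setminus x_{i_k})^s)$ for every $k$ and every $s \geq 1$. Normal torsion-freeness of $I$ is then immediate.

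Condition (ii) I would handle using Remark~\ref{rem1} together with Proposition~\ref{limdepth}. If $|A_k|=1$, then by Remark~\ref{rem1}(4) the variable $x_{i_k}$ divides every element of $\mathcal{G}(I)$, so $I \setminus x_{i_k}$ is the zero ideal and the condition is vacuous. If $|A_k| \geq 2$, then Remark~\ref{rem1}(3) guarantees $I \setminus x_{i_k} = B_t(v')$ for some $v'$ with $B_t(v') \in \mathcal{A}_t$, and Proposition~\ref{limdepth} applied to $B_t(v')$ tells me that the graded maximal ideal of its ambient ring is not associated to any of its powers. When some variable $x_\alpha$ with $\alpha \neq i_k$ fails to appear in $\mathrm{supp}(B_t(v'))$, then $\mathfrak{m} \setminus x_{i_k}$ contains $x_\alpha$, and since associated primes of a monomial ideal use only variables in its support, $\mathfrak{m} \setminus x_{i_k}$ cannot be associated; otherwise $\mathfrak{m} \setminus x_{i_k}$ coincides with the ambient maximal ideal of $B_t(v')$ and Proposition~\ref{limdepth} applies directly.

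The main obstacle is condition (i), which amounts to the combinatorial statement that every minimal vertex cover $\mathfrak{p}$ of the $d$-partite $d$-uniform hypergraph corresponding to $I$ contains exactly one of $x_{i_1}, \ldots, x_{i_d}$. Since $u \in I \subseteq \mathfrak{p}$, at least one such variable belongs to $\mathfrak{p}$. For the other direction I argue by contradiction: suppose $x_{i_k}, x_{i_l} \in \mathfrak{p}$ with $k < l$. Minimality of $\mathfrak{p}$ at $x_{i_l}$ produces a generator $g = x_{j_1}\cdots x_{j_d}$ with $j_l = i_l$ and $x_{j_m} \notin \mathfrak{p}$ for every $m \neq l$; then $j_k \in A_k \setminus \{i_k\}$, forcing $j_k \leq i_k - 1$ (and in particular $|A_k| \geq 2$). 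Symmetrically, minimality at $x_{i_k}$ yields $\tilde g = x_{\tilde j_1}\cdots x_{\tilde j_d}$ with $\tilde j_k = i_k$ and $x_{\tilde j_m} \notin \mathfrak{p}$ for every $m \neq k$.

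I then splice these witnesses together by setting $G = x_{j_1}\cdots x_{j_k} x_{\tilde j_{k+1}}\cdots x_{\tilde j_d}$. The $t$-spread property holds within the first $k$ positions by virtue of $g$, within the last $d-k$ positions by virtue of $\tilde g$, and at the junction through the estimate
\[
\tilde j_{k+1} - j_k \;\geq\; (i_k + t) - (i_k - 1) \;=\; t + 1 \;>\; t,
\]
using $\tilde j_{k+1} \geq \tilde j_k + t = i_k + t$ and $j_k \leq i_k - 1$. Each exponent of $G$ is bounded above by the corresponding $i_m$ (from $g$ for $m \leq k$ and from $\tilde g$ for $m \geq k+1$), so $G \in \mathcal{G}(I)$. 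However, every variable of $G$ lies outside $\mathfrak{p}$: for $m \leq k$ one has $m \neq l$ and hence $x_{j_m} \notin \mathfrak{p}$, while for $m \geq k+1$ one has $m \neq k$ and hence $x_{\tilde j_m} \notin \mathfrak{p}$. This contradicts $\mathfrak{p}$ being a vertex cover, completing the verification of condition (i) and hence the proof.
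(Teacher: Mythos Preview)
Your proof is correct and follows the same overall strategy as the paper: apply Theorem~\ref{use} with $v=u$, verify the minimal-prime condition on $u$, and use Remark~\ref{rem1}(3) together with Proposition~\ref{limdepth} for the deletion condition.

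There are, however, two noteworthy differences in execution. First, the paper begins with a reduction: using Remark~\ref{rem1}(4) and \cite[Lemma~3.12]{SN} it peels off any variable $x_{i_k}$ with $|A_k|=1$, so that it may assume $i_1>1$ before invoking Theorem~\ref{use}. You instead treat the case $|A_k|=1$ directly by observing that $I\setminus x_{i_k}=(0)$, which makes that instance of condition~(ii) vacuous; this avoids the external citation. Second, for condition~(i) the paper simply quotes \cite[Theorem~4.2]{Claudia} to assert $u\in\mathfrak{p}\setminus\mathfrak{p}^2$ for every minimal prime $\mathfrak{p}$, whereas you supply a self-contained combinatorial argument: from two witnesses $g,\tilde g$ to minimality at $x_{i_l},x_{i_k}$ you splice together a generator $G$ missing $\mathfrak{p}$ entirely. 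Your argument tacitly uses that the $A_m$ are pairwise disjoint (a consequence of $i_{d-1}\le(d-1)t$), which is what forces the witnessing index $i_l$ to sit in position $l$ of $g$; this is correct and perhaps deserves one explicit sentence. The upshot is that your version is more self-contained, while the paper's is shorter by outsourcing condition~(i).
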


\begin{proof}
We may assume that $i_k \neq (k-1)t+1$, for all $1 \leq k \leq d$. Otherwise, from Remark~\ref{rem1}(4), it follows that $I=wB_t(v)$ where $w$ is the product of all variables for which $i_k =(k-1)t+1$, $v=u/w$, and $B_t(v)$ is a $t$-spread principal Borel ideal in it's ambient ring. Then from \cite[Lemma 3.12]{SN}, it follows that $I$ is normally torsion-free if and only if $B_t(v)$ is normally torsion-free. Therefore, one may reduce the discussion to $B_t(v)$ whose generators are not a multiple of a fixed monomial. 
 
 Let $u=x_{i_1}x_{i_2}\cdots  x_{i_{d-1}}x_{i_d}$ with $i_1>1$. To show that $I=B_t(u)$ is normally torsion-free, we will use Theorem~\ref{use}. It can be seen from \cite[Theorem 4.2]{Claudia} that $u \in \mathfrak{p}\setminus \mathfrak{p}^2$, for all $\mathfrak{p} \in \mathrm{Min}(I)$. Recall that the family of ideals $\mathcal{A}_t$ is defined by: $B_t(u) \in \mathcal{A}_t$ if and only if there exists some $d$ such that  $u=x_{i_1}x_{i_2}\cdots  x_{i_{d-1}}x_{i_d}$ with $i_{d-1} \leq(d-1)t$. From Remark~\ref{rem1}(3), it follows that $I\setminus{x_{i_k}} \in \mathcal{A}_t$, for all $k=1, \ldots, d$. Furthermore, Proposition~\ref{limdepth} implies that the unique graded maximal ideal of the ambient ring of $I\setminus{x_{i_k}}$ does not belong to $\mathrm{Ass}(R/(I\setminus x_i)^s)$ for all $s$.
\end{proof}

Next, we prove the converse of Theorem~\ref{main-NTF} to obtain the complete characterization of normally torsion-free $t$-spread principal Borel ideals.

\begin{theorem}\label{complete}
Let  $I=B_t(x_{i_1}x_{i_2}\ldots  x_{i_{d-1}}x_{i_d}) \subset R=K[x_i:  x_i \in \mathrm{supp}(I)]$. Then  $I$ is normally torsion-free if and only if  $i_{d-1} \leq(d-1)t$. In other words, $I$ is normally torsion-free if and only if $I$ can be viewed as an edge ideal of a $d$-uniform $d$-partite hypergraph. 
\end{theorem}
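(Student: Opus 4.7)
The forward direction is Theorem~\ref{main-NTF}, and the equivalence with being an edge ideal of a $d$-uniform $d$-partite hypergraph follows from Remark~\ref{dpartiteduniform}. So the only remaining task is to prove the converse: if $i_{d-1} \geq (d-1)t+1$, then $I = B_t(u)$ is not normally torsion-free. My plan is to produce a monomial prime $\mathfrak{p}$ for which the monomial localization $I(\mathfrak{p})$ is the edge ideal of a non-bipartite graph, and then to invoke the theorem of Simis, Vasconcelos and Villarreal \cite{SVV} together with the fact that normal torsion-freeness descends to monomial localizations.

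Take $\mathfrak{p} := (x_j : x_j \in \mathrm{supp}(I),\ j \geq (d-2)t+1)$. The $t$-spread condition forces every minimal generator $x_{j_1}\cdots x_{j_d}$ of $I$ to satisfy $j_{d-1} \geq (d-2)t+1$ and $j_d \geq (d-1)t+1$. In particular, $\mathfrak{p}$ is a vertex cover of the associated hypergraph, so $I(\mathfrak{p})$ is a proper ideal, and the image of any generator of $I$ under the localization map is divisible by $x_{j_{d-1}}x_{j_d}$. Conversely, for every pair $(a,b)$ with $a \in A_{d-1}$, $b \in A_d$ and $b - a \geq t$, the monomial
\[
v_{a,b} := x_1 x_{t+1} \cdots x_{(d-3)t+1}\, x_a x_b
\]
(with the leading product understood to be empty when $d = 2$) is a minimal generator of $I$ whose image under the localization is exactly $x_a x_b$. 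Combining these two observations gives $\mathcal{G}(I(\mathfrak{p})) = \{x_a x_b : a \in A_{d-1},\, b \in A_d,\, b \geq a+t\}$; thus $I(\mathfrak{p})$ is an edge ideal of a simple graph. Under the index shift $y_i = x_{(d-2)t+i}$ it is identified with $B_t(y_p y_q)$ where $p = i_{d-1}-(d-2)t \geq t+1$ and $q = i_d - (d-2)t \geq 2t+1$.

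Since $p \geq t+1$ and $q \geq 2t+1$, the three degree-two monomials $y_1 y_{t+1}$, $y_1 y_{2t+1}$ and $y_{t+1} y_{2t+1}$ all belong to $B_t(y_p y_q)$ and form a triangle on the vertex set $\{y_1, y_{t+1}, y_{2t+1}\}$. Hence the graph corresponding to $I(\mathfrak{p})$ contains an odd cycle and is non-bipartite, so by \cite{SVV} the ideal $I(\mathfrak{p})$ is not normally torsion-free. A localization argument parallel to the proof of Lemma~\ref{Lem. 1} (using \cite[Lemma 4.6]{RNA}) shows that if $I$ were normally torsion-free then $I(\mathfrak{p})$ would be as well; contrapositively, $I$ is not normally torsion-free.

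The main technical step is the precise identification of $\mathcal{G}(I(\mathfrak{p}))$ carried out in the second paragraph: one must verify that whenever a generator has $j_{d-k} \geq (d-2)t+1$ for some $k \geq 2$, its localized image is automatically a non-minimal multiple of one of the degree-two monomials $x_a x_b$ already exhibited. This amounts to a careful bookkeeping of the $t$-spread conditions combined with the explicit generators $v_{a,b}$, and it is what makes $I(\mathfrak{p})$ an edge ideal of a graph rather than a more complicated hypergraph ideal. Once this structural fact is in hand, the triangle $\{y_1, y_{t+1}, y_{2t+1}\}$ and the descent along monomial localization finish the argument.
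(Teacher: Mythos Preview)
Your argument is correct, and it takes a genuinely different route from the paper's. The paper localizes at the prime $\mathfrak{p}=(x_{(k-1)t+1},\ldots,x_{i_d})$, where $k$ is the \emph{smallest} index with $i_k>kt$; the resulting localization is again a $t$-spread principal Borel ideal, but generally of degree $>2$. The paper then shows, via Lemma~\ref{aklemma} and the linear relation graph, that this localization has analytic spread equal to the dimension of its ambient ring, and invokes the Cohen--Macaulayness of the Rees algebra (Corollary~\ref{cor1}) to conclude that the graded maximal ideal is an associated prime of some power. Finally it uses \cite[Theorem~4.2]{Claudia} to check that this prime is not minimal.

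Your approach instead localizes at the ``last two blocks'' $A_{d-1}\cup A_d$, forcing the localization down to degree~$2$ regardless of how many earlier indices satisfy $i_k>kt$. You then exhibit an explicit triangle and appeal to the Simis--Vasconcelos--Villarreal characterization of bipartite edge ideals. This is more elementary: it bypasses the analytic-spread and depth-limit machinery, the Cohen--Macaulay Rees algebra input, and the description of $\mathrm{Min}(I)$ from \cite{Claudia}, at the cost of a short bookkeeping check (your third paragraph) to see that all higher-degree images in the localization are redundant. The paper's route, on the other hand, produces a prime of a specific combinatorial form and fits seamlessly with the linear-relation-graph analysis already developed for the forward direction.
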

\begin{proof}
Following Theorem~\ref{main-NTF}, it is enough to show that if $i_{d-1} >(d-1)t$, then $I$ is not normally torsion-free. Let $k$ be the smallest integer for which $i_k > kt$. As we explained in Remark~\ref{rem1}(5), if $i_k > kt$, then $i_j> jt$ and $|A_j| \geq 2$, for all $j= k, \ldots, d$. The sets $A_j$'s are defined in (\ref{ak}). It follows from Lemm~\ref{aklemma} that $A=A_k \cup A_{k+1} \cup \cdots  \cup A_d \subset V(\Gamma) $, where $\Gamma$ is linear relation graph of $I$ and for each $j=k, \ldots, d$,  the induced subgraph of $\Gamma$ on $A_j$ is a complete graph. Moreover, $i_j> jt$, for each $j=k, \ldots, d$ gives that $i_j \in A_j \cap A_{j+1} \neq \emptyset$, for each $j=k, \ldots, d-1$. Therefore, we conclude that the induced subgraph on $A$ is connected.

Set $\mathfrak{p}=(x_{(k-1)t+1}, \ldots, x_{i_d})$. Here the crucial observation is that if we take the monomial localization of $I$ at $\mathfrak{p}$; in other words, if we map all variables $x_i $ to 1 where $i \in A_1 \cup \cdots \cup A_{k-1}$, then we are reducing the degree of each generator of $B_t(u)$ by $k-1$. It is because $A \cap B = \emptyset$, where $B=A_1 \cup \cdots \cup A_{k-1}$. Hence $I(\mathfrak{p})$ can be viewed as a $t$-spread principal Borel ideal by a shift of indices of variables. More precisely, each  $j \in A $ is shifted to $j-(k-1)t$. Therefore, $I(\mathfrak{p})= B_t(x_{j_1} \cdots x_{j_k})$, where $t<j_1 < \cdots < j_k$. The linear relation graph of $I(\mathfrak{p})$ is isomorphic to the induced subgraph of $\Gamma$ on vertex set $A$. Then $\ell( I(\mathfrak{p})) = |\mathrm{supp} (B_t(x_{j_1} \cdots x_{j_k}))|=\mathrm{dim}(R(\mathfrak{p}))$, and $\lim_{k \rightarrow \infty} \mathrm{depth}(R(\mathfrak{p})/I(\mathfrak{p})^k)= 0$. This shows that $\mathfrak{p} \in \mathrm{Ass }(R/I^k)$, for some $k>1$. Moreover, $\mathfrak{p} \notin \mathrm{Ass}(R/I)$ because of \cite[Theorem 4.2]{Claudia}. Hence we conclude that $I$ is not normally torsion-free.
\end{proof}
In the subsequent example, we illustrate the construction of $\mathfrak{p}$ as in the proof of Theorem~\ref{complete}.
\begin{example}
Let $u=x_2x_7x_{10}x_{13}$ be a 3-spread monomial and $I=B_3(u)$. Here $i_1=2, i_2=7, i_3=10, i_4=13$. Moreover $i_1=2 < t=3$, but $i_2=7 >2.3$. Set $\mathfrak{p}=(x_4, x_5, \ldots, x_{13})$ as in the proof of Theorem~\ref{complete}. Then the minimal generators of $I(\mathfrak{p})$ are listed below. In the following table, $u \rightarrow v$ indicates that $u \in \mathcal{G}(I(\mathfrak{p}))$ and $v$ is the monomial obtained by shifting  each $j \in [4,13]$ to $j-3$.
\begin{center}
\begin{tabular}{ l l l l}
$x_4x_7x_{10} \rightarrow x_1x_4x_{7}$\\
$x_4x_7x_{11} \rightarrow x_1x_4x_{8}$\\
$x_4x_7x_{12} \rightarrow x_1x_4x_{9}$\\
$x_4x_7x_{13} \rightarrow x_1x_4x_{10}$\\
$x_4x_8x_{11} \rightarrow x_1x_5x_{8}$&$x_5x_8x_{11} \rightarrow x_2x_5x_{8}$\\
$x_4x_8x_{12} \rightarrow x_1x_5x_{9}$&$x_5x_8x_{12}  \rightarrow x_2x_5x_{9}$\\
$x_4x_8x_{13} \rightarrow x_1x_5x_{10}$&$x_5x_8x_{13} \rightarrow x_2x_5x_{10}$\\
$x_4x_9x_{12} \rightarrow x_1x_6x_{9}$&$x_5x_9x_{12} \rightarrow x_2x_6x_{9}$&$x_6x_9x_{12} \rightarrow x_3x_6x_{9}$\\
$x_4x_9x_{13} \rightarrow x_1x_6x_{10}$&$x_5x_9x_{13} \rightarrow x_2x_6x_{10}$&$x_6x_9x_{13}\rightarrow x_3x_6x_{10}$\\
$x_4x_{10}x_{13} \rightarrow x_1x_7x_{10}$&$x_5x_{10}x_{13} \rightarrow x_2x_7x_{10}$&$x_6x_{10}x_{13}\rightarrow x_3x_7x_{10}$&$x_7x_{10}x_{13}\rightarrow x_4x_7x_{10}$
\end{tabular}
\end{center}
Therefore, $I(\mathfrak{p})$ can be viewed as a $3$-spread principal Borel ideal $B_3(x_4x_7x_{10})$. A direct computation in Macaulay2 \cite{GS} shows that $(x_1, \ldots, x_{10})$ is an associated prime of the third power of $B_3(x_4x_7x_{10})$. Consequently, $\mathfrak{p}$ is also an associated prime of the third power of $I$. 
\end{example}

When $u$ is a monomial of degree 3, then we have the following: 

\begin{lemma}
Let $u=x_ax_bx_n$ be a $t$-spread monomial and $I=B_t(u)\subset R=K[x_i:  x_i \in \mathrm{supp}(I)]$. Then we have the following:
\begin{enumerate}
\item[\em{(i)}] if $b < 2t+1$, then $I$ is normally torsion-free.

\item[\em{(ii)}] $a=1$ and $b\geq2t+1$, then $I$ is nearly normally torsion-free.

\item[\em{(iii)}] $a>1$ and $b\geq2t+1$, then $I$ is not nearly normally torsion-free.

\end{enumerate}
\end{lemma}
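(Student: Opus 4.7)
Part (i) is immediate from Theorem~\ref{complete}: $b<2t+1$ gives $i_{d-1}=b\le(d-1)t$, so $I$ is normally torsion-free. Part (ii) uses the factorization $I=x_1\cdot J$ forced by $a=1$, where $J=B_t(x_bx_n)$ lives in $K[x_{t+1},\ldots,x_n]$; since $x_1\notin\mathrm{supp}(J)$, Lemma~\ref{Lem. Multipe} reduces the question to $J$, which is nearly normally torsion-free by Proposition~\ref{App. 2}.

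For part (iii), the plan is to exhibit two distinct embedded associated primes of $I^s$ (for varying $s$), which directly contradicts the single-extra-prime condition in the definition of nearly normally torsion-freeness. The first embedded prime comes from the proof of Theorem~\ref{complete}: taking $k$ minimal with $i_k>kt$ yields $\mathfrak{p}_0=(x_{(k-1)t+1},\ldots,x_n)\in\mathrm{Ass}_R(R/I^s)$ for some $s$, which specialises to $\mathfrak{m}$ when $a>t$ and to $(x_{t+1},\ldots,x_n)$ when $a\le t$ with $b\ge 2t+1$. For the second prime I will invoke \cite[Lemma 4.6]{RNA} in reverse: if $\mathfrak{q}\in V^*(I)$ and the maximal ideal of $R(\mathfrak{q})$ appears as an associated prime of some power of the monomial localization $I(\mathfrak{q})$, then $\mathfrak{q}$ is an associated prime of the corresponding power of $I$.

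When $a>t$, take $\mathfrak{q}=(x_{t+1},\ldots,x_n)$: after absorbing redundancies, $I(\mathfrak{q})$ collapses to $B_t(x_bx_n)\subset K[x_{t+1},\ldots,x_n]$, which by Proposition~\ref{App. 2} (together with the $d=2$ case of Theorem~\ref{complete}, using $b>t$) is nearly but not normally torsion-free, so $(x_{t+1},\ldots,x_n)$ is its embedded prime. When $a\le t$, take $\mathfrak{q}=(x_1,x_{t+2},\ldots,x_n)$: setting $x_2,\ldots,x_a$ and $x_{t+1}$ to $1$ simplifies $I(\mathfrak{q})$ to a square-free degree-two ideal, namely the edge ideal of a graph $G'$ on $\{1\}\cup\{t+2,\ldots,n\}$ whose edges are $\{1,j\}$ for $j\in[2t+1,n]$ together with the Borel-type edges $\{j,j'\}$ for $j\in[t+2,b]$, $j'\in[j+t,n]$. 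Direct construction of odd cycles---triangles $\{1,j,j+t\}$ cover $j\in[2t+1,b]$, triangles $\{1,j,2t+1\}$ cover $j\in[3t+1,n]$, and length-five cycles of the form $(j,j+t,1,2t+1,3t+1)$ and $(j,t+2,3t+1,2t+1,1)$ cover the remaining vertices in $[t+2,2t]$ and $[b+1,3t]$ respectively---shows that $G'$ is connected and every vertex lies in some odd cycle. An explicit colon-witness $v\in R(\mathfrak{q})$ built from this odd-cycle covering then exhibits the maximal ideal of $R(\mathfrak{q})$ as an embedded associated prime of some $I(\mathfrak{q})^s$. In either subcase the two primes are manifestly distinct (one contains $x_{t+1}$ and not $x_1$, or $\mathfrak{m}$ versus a proper subprime), so $I$ has at least two embedded primes and is not nearly normally torsion-free.

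The main technical obstacle is the $a\le t$ subcase, where $I(\mathfrak{q})$ does not collapse to a principal Borel ideal and the argument must pass through the graph-theoretic structure of $G'$; verifying the odd-cycle covering of every vertex and turning that combinatorial data into a colon witness for the maximal prime requires the most care.
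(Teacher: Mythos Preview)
Parts (i) and (ii) coincide with the paper's proof. For (iii) you share the paper's strategy of exhibiting two distinct embedded primes, and your treatment of the case $a>t$ (taking $\mathfrak{m}$ from Theorem~\ref{complete} together with $(x_{t+1},\ldots,x_n)$, whose localization collapses to $B_t(x_bx_n)$) is essentially the paper's argument restricted to that subcase.

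The divergence is at the prime $\mathfrak{q}=(x_1,x_{t+2},\ldots,x_n)$ in the subcase $a\le t$. The paper's key observation, which you miss, is that after the index shift $k\mapsto k-t$ for $k\ge t+2$ (keeping $x_1$ fixed), the localization $I(\mathfrak{q})$ is again a $t$-spread principal Borel ideal, namely $B_t(x_{b-t}x_{n-t})$ with $b-t>t$. The machinery already built in Section~\ref{borel} then applies verbatim: the linear relation graph is connected on the full support, $\mathcal{R}(B_t(u))$ is Cohen--Macaulay by \cite{AEL}, so the limit depth is zero and $\mathfrak{q}$ is an embedded prime. This also makes your case split unnecessary: the paper uses $\mathfrak{p}_1=(x_{t+1},\ldots,x_n)$ and $\mathfrak{p}_2=(x_1,x_{t+2},\ldots,x_n)$ uniformly in $a$, with both localizations reducing to degree-two Borel ideals.

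Your graph-theoretic route has a genuine gap at the final step. The odd-cycle constructions are correct and do establish that $G'$ is connected and non-bipartite, but the sentence ``an explicit colon-witness built from this odd-cycle covering then exhibits the maximal ideal as an embedded associated prime'' is not a standard move and is not justified: there is no general recipe for assembling odd cycles through every vertex into a single monomial $v$ with $(I(G')^s:v)$ equal to the maximal ideal. The statement you actually need---that the edge ideal of a connected non-bipartite graph eventually has the maximal ideal as an associated prime---is true, but its proof runs through analytic spread and a Cohen--Macaulay Rees algebra, and for an arbitrary edge ideal that Cohen--Macaulayness is not available. Here it \emph{is} available, precisely because $I(\mathfrak{q})$ is a Borel ideal; so the paper's identification is not just a cleaner alternative but the missing ingredient that closes your argument.
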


\begin{proof}
(i) It follows from Theorem~\ref{main-NTF}.

(ii) Let $u\in \mathcal{G}(I)$. Then $u=x_1x_{i_2}x_{i_3}$, where $i_2\in \{t+1, \ldots, b\}$ and $i_3\in \{2t+1,\ldots,n\}$. It can be easily seen that $I=x_1J$, where $J=B_t(x_{b}x_n)$. It follows from Proposition~\ref{App. 2} that $B_t(x_{b}x_n)$ is nearly normally torsion-free. Then we get the required result by using Lemma~\ref{Lem. Multipe}.  

(iii) We will prove the assertion by constructing two monomial prime ideals $\mathfrak{p}_1$ and $\mathfrak{p}_2$ that belong to $\mathrm{Ass}(R/I^k)$, for some $k>1$, but $\mathfrak{p}_1, \mathfrak{p}_2 \notin \mathrm{Ass}(R/I)$. Recall from (\ref{ak}) that if $v \in \mathcal{G}(I)$, then $v=x_{a'}x_{b'}x_{c'}$ with $ a' \in A_1=[1, a]$, $b' \in A_2=[t+1,b]$, and $c'\in A_3=[2t+1, n]$. Since $a>1$, we have $|A_1|> 1$ and since $b\geq 2t+1$, we have $|A_2|> 1$ and $|A_3|>1$.

Let $\mathfrak{p}_1=(x_{t+1}, \ldots, x_{i_d})$. As shown in the last part of the proof of Theorem~\ref{complete}, $\mathfrak{p}_1 \in \mathrm{Ass}(R/I^k)$, for some $k>1$, but $\mathfrak{p}_1 \notin \mathrm{Ass}(R/I)$. Let $\mathfrak{p}_2=(x_1, x_{t+2}, x_{t+3}, \ldots, x_n)$, then from \cite[Theorem 4.2]{Claudia}, we have $\mathfrak{p}_2 \notin \mathrm{Ass}(R/I)$. We claim that $I(\mathfrak{p}_2)$ can be viewed as the $t$ spread principal Borel ideal $B_t(x_rx_n)$ where $r>t$. Indeed, by substituting $x_{t+1}=1$, all $t$-spread monomials of the form $x_1x_{t+1}x_{c'}$, with $c' \in A_3$ are reduced to $x_1x_{c'}$. Therefore, the monomials $x_1x_{b'}x_c \in \mathcal{G}(I)$ with $b' \in [t+2,b]$ and $c' \in [2t+2,n]$ do not appear in $\mathcal{G}(I(\mathfrak{p}_2))$.

Since $a>1$, we have $2 \in A_1$. Moreover, if $v=x_{a'}x_{b'}x_{c'} \in \mathcal{G}(I)$ with $a'\geq 2$, then $b' \geq t+2$ and $c' \geq 2t+2$ because $v$ is a $t$-spread monomial .  

By substituting $x_{i}=1$, for all $i \in [2,a]$, all monomials of the form $x_i x_{b'}x_{c'}$, with $b' \in [t+2, b]$, $c' \in [2t+2,n]$ are reduced to $x_{b'}x_{c'}$. This shows that $I(\mathfrak{p}_2)$ is generated in degree 2 by $t$-spread monomials of the following form:
\begin{center}
\begin{tabular}{l}
$x_1x_{c'}$ with $c' \in A_3=[2t+1,n]$; \\
 $x_{b'}x_{c'}$ with $b'\in [t+2,b]$ and $c' \in [2t+2,n]$ and $c' - b'\geq t$.
\end{tabular}
\end{center}

Note that $\mathrm{supp}(I(\mathfrak{p}_2))=\{x_1, x_{t+2}, x_{t+3}, \ldots, x_n\}$. Moreover, if we shift the indices of variables in $\{x_{t+2}, x_{t+3}, \ldots, x_n\}$ by $k\rightarrow k-t$, then $I(\mathfrak{p}_2)$ can be viewed as a $t$-spread principal Borel ideal $B_t(x_{r}x_{s})$ where $s=n-t$ and $r=b-t>t$ because $b \geq 2t+1$. Moreover, the linear relation graph of $B_t(x_{r}x_{s})$ is a connected with $ |\mathrm{supp}(I(\mathfrak{p}_2))| $ vertices. Hence, $\lim_{k \rightarrow \infty} \mathrm{depth}(R(\mathfrak{p}_2)/I(\mathfrak{p}_2)^k)= 0$, and consequently $\mathfrak{p}_2 \in \mathrm{Ass}(R/I^k)$, for some $k >1$. 
\end{proof}
 

  \noindent{\bf Acknowledgments.}
 We   would like to thank Professor Adam Van Tuyl for his valuable comments  in preparation of Theorem \ref{NTF1}.  
In addition, the authors are deeply grateful to the  referee for careful reading of the manuscript and    valuable suggestions which led to significant  improvements in the quality of this paper.


\end{document}